\documentclass[a4paper]{amsart}
\usepackage[alphabetic]{amsrefs}
\usepackage{amssymb,mathrsfs}
\usepackage[all]{xy}
\newdir{ >}{{}*!/-5pt/@{>}} 
\SelectTips{cm}{} 
\usepackage{amssymb}
\usepackage{array}
\usepackage{lscape}
\usepackage{tikz}

\title[The Adams spectral sequence for the image-of-$J$ spectrum]
	{The Adams spectral sequence \\ for the image-of-$J$ spectrum}
\author{Robert R. Bruner}
\address{Department of Mathematics, Wayne State University, USA}
\email{robert.bruner@wayne.edu}
\author{John Rognes}
\address{Department of Mathematics, University of Oslo, Norway}
\email{rognes@math.uio.no}
\subjclass[2010]{55Q50, 55T15}

\date{January 26th 2022}

\newtheorem{theorem}{Theorem}[section]
\newtheorem{proposition}[theorem]{Proposition}
\newtheorem{lemma}[theorem]{Lemma}
\theoremstyle{definition}
\newtheorem{definition}[theorem]{Definition}
\theoremstyle{remark}
\newtheorem{remark}[theorem]{Remark}

\numberwithin{equation}{section}

\DeclareMathOperator{\cof}{cof}
\DeclareMathOperator{\coim}{coim}
\DeclareMathOperator{\cok}{cok}
\DeclareMathOperator{\Ext}{Ext}
\DeclareMathOperator{\Hom}{Hom}
\DeclareMathOperator{\im}{im}
\DeclareMathOperator{\ord}{ord}
\newcommand{\bF}{\mathbb{F}}
\newcommand{\bZ}{\mathbb{Z}}
\newcommand{\from}{\leftarrow}
\newcommand{\into}{\rightarrowtail}
\newcommand{\longfrom}{\longleftarrow}
\newcommand{\longto}{\longrightarrow}
\newcommand{\onto}{\twoheadrightarrow}
\renewcommand{\:}{\colon}
\newcommand{\<}{\langle}
\renewcommand{\>}{\rangle}

\newcommand{\ldY}[1]{\overline{#1}}
\newcommand{\ldYdZ}[1]{\overline{\overline{#1}}}







\begin{document}

\begin{abstract}
We show that if we factor the long exact sequence in cohomology of
a cofiber sequence of spectra into short exact sequences, then the
$d_2$-differential in the Adams spectral sequence of any one term is
related in a precise way to Yoneda composition with the 2-extension
given by the complementary terms in the long exact sequence.  We use
this to give a complete analysis of the Adams spectral sequence for
the connective image-of-$J$ spectrum, finishing a calculation that
was begun by D.~Davis in~1975.
\end{abstract}

\maketitle

\section{Introduction}

Let
\begin{equation} \label{eq:XYZ}
X \overset{f}\longto Y \overset{g}\longto Z \overset{h}\longto \Sigma X
\end{equation}
be a homotopy cofiber sequence of spectra.  Let~$p$ be a prime, let $H
= H\bF_p$ be the mod~$p$ Eilenberg--MacLane ring spectrum, let $H^*$
denote mod~$p$ cohomology, and consider the induced long exact sequence
of $A$-modules
\begin{equation} \label{eq:les}
\cdots \from H^*(X)
       \overset{f^*}\longfrom H^*(Y)
       \overset{g^*}\longfrom H^*(Z)
       \overset{h^*}\longfrom \Sigma H^*(X)
	\from \cdots \,,
\end{equation}
where $A = H^*(H)$ denotes the mod~$p$ Steenrod algebra.  Letting
$$
K = \ker(g^*) \qquad
I = \im(g^*) \qquad
C = \cok(g^*)
$$
we obtain the following short exact sequences of $A$-modules:
\begin{align*}
0 \from \Sigma^{-1} K &\overset{q}\longfrom H^*(X)
	\overset{i}\longfrom C \from 0
	\tag{$E_X$} \\
0 \from C &\overset{q_Y}\longfrom H^*(Y) 
        \overset{i_Y}\longfrom I \from 0
	\tag{$E_Y$} \\
0 \from I &\overset{q_Z}\longfrom H^*(Z) 
        \overset{i_Z}\longfrom K \from 0
	\tag{$E_Z$} \,.
\end{align*}

\begin{figure}
$$
\xymatrix{
E_2^{s,t}(\Sigma^{-1} Z) \ar[d] \ar[r]^-{d_2} \ar@/_5pc/[dd]_-{h} 
	& E_2^{s+2,t+1}(\Sigma^{-1} Z) \ar[d] \ar@/^5pc/[dd]^-{h} \\
\Ext_A^{s,t}(\Sigma^{-1} K, \bF_p) \ar[d]^-{q^*}
	& \Ext_A^{s+2,t+1}(\Sigma^{-1} K, \bF_p) \ar[d]^-{q^*} \\
E_2^{s,t}(X) \ar[d]^-{i^*} \ar[r]^-{d_2} \ar@/_5pc/[dd]_-{f}
	& E_2^{s+2,t+1}(X) \ar[d]^-{i^*} \ar@/^5pc/[dd]^-{f} \\
\Ext_A^{s,t}(C, \bF_p) \ar[d]
	& \Ext_A^{s+2,t+1}(C, \bF_p) \ar[d] \\
E_2^{s,t}(Y) \ar[r]^-{d_2}
	& E_2^{s+2,t+1}(Y)
}
$$
\caption{$\Ext$-groups and $d_2$-differentials\label{E2d2ZXY}}
\end{figure}

We can compare the Adams $d_2$-differentials for $X$, $Y$ and~$Z$,
as in Figure~\ref{E2d2ZXY}.  If $g^*$ is injective (so that $K =
0$) or surjective (so that $C = 0$), the composite $i^* d_2 q^*$ is
necessarily zero.  If $g^*$ is trivial (so that $I = 0$), then $i = f^*$
and $q = h^*$, and $i^* d_2 q^*$ factors through $f h$, hence is zero.
In these cases the Adams $E_2$-terms of $X$, $Y$ and $Z$ form a long exact
sequence, and the geometric boundary theorem~\cite{Bru78} gives useful
information about the relationship between their $d_2$-differentials.

Our first main result address what happens when $g^*$ is neither injective,
surjective or zero, in which case $i^* d_2 q^*$ can indeed be nontrivial.

\begin{theorem} \label{thm:id2q}
Consider the homotopy cofiber sequence~\eqref{eq:XYZ}.  In the mod~$p$
Adams spectral sequence for~$X$ the composite homomorphism
$$
i^* d_2 q^* \: \Ext_A^{s,t}(\Sigma^{-1} K, \bF_p)
	\overset{q^*}\longto E_2^{s,t}(X)
	\overset{d_2}\longto E_2^{s+2,t+1}(X)
	\overset{i^*}\longto \Ext_A^{s+2,t+1}(C, \bF_p)
$$
agrees, up to a sign, with the composition
$$
\delta_Y \delta_Z \: \Ext_A^{s,t}(\Sigma^{-1} K, \bF_p)
	\overset{\delta_Z}\longto \Ext_A^{s+1,t+1}(I, \bF_p)
	\overset{\delta_Y}\longto \Ext_A^{s+2,t+1}(C, \bF_p)
$$
of the connecting homomorphisms associated to the extensions $(E_Z)$
and~$(E_Y)$.
\end{theorem}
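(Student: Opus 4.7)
The plan is to reduce both sides of the asserted identity to explicit chain-level formulas and compare them.  The algebraic input is that $\delta_Y \delta_Z$ equals Yoneda multiplication with the spliced 2-extension
$$
0 \from C \overset{q_Y}\longfrom H^*(Y) \overset{g^*}\longfrom H^*(Z) \overset{i_Z}\longfrom K \from 0,
$$
obtained by joining $(E_Y)$ and $(E_Z)$ along their common term $I$; after the $\Sigma^{-1}$ shift this represents a class in $\Ext_A^{2,1}(\Sigma^{-1}K, C)$.  Thus the theorem is the assertion that Yoneda multiplication by this 2-extension agrees, up to a sign, with the composite $i^* d_2 q^*$.

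For the computation of $d_2$, I would build a free $A$-module resolution $P_\bullet \to H^*(X)$ compatible with the short exact sequence $(E_X)$ by applying the horseshoe lemma to resolutions of $C$ and $\Sigma^{-1}K$.  A cocycle representing $\bar\alpha \in \Ext^{s,t}_A(\Sigma^{-1}K, \bF_p)$ pulls back along $q$ to a cocycle representing $q^*\bar\alpha$, and the Adams $d_2$-differential on $E_2^{s,t}(X)$ is then obtained by lifting this cocycle through two stages of a compatible Adams tower of $X$ and recording the obstruction to a further lift.  I would arrange the Adams tower of $X$ so that it interacts coherently with Adams towers of $Y$ and $Z$ via the cofiber sequence $X \to Y \to Z$ (for instance, via canonical resolutions with $X_s \to Y_s \to Z_s$ a cofiber sequence for each $s$).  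With this set-up, the lift of $q^*\bar\alpha$ past the first stage of the tower is governed by the short exact sequence $(E_Z)$, and the obstruction to lifting past the second stage by $(E_Y)$, so that projecting onto $\Ext_A(C, \bF_p)$ via $i^*$ yields precisely the Yoneda splice of these two extensions applied to $\bar\alpha$.

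The principal obstacle lies in carrying out this lifting argument cleanly.  A natural approach is to invoke the geometric boundary theorem of \cite{Bru78} twice: once for the cofiber sequence $X \to Y \to Z$, to identify the boundary in the $E_2$-sequence with an algebraic connecting homomorphism, and once after a rotation for $\Sigma^{-1}Z \to X \to Y$; composing these accounts for the two-stage structure of $d_2$ acting on classes in the image of $q^*$.  Matching the resulting formula with the Yoneda splice on the nose requires careful tracking of Koszul signs from the splicing of $(E_Y)$ and $(E_Z)$ against the sign conventions of the Adams $d_2$ (arising from the connecting maps $K_s \to \Sigma K_{s-1}$ in the Adams tower).  This sign comparison is routine but delicate, and accounts for the ``up to a sign'' in the statement.
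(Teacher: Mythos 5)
Your overall framing is reasonable and partly matches the paper: the identification of $\delta_Y\delta_Z$ with Yoneda multiplication by the spliced $2$-extension is the standard fact the paper quotes from Mac\,Lane, and the paper does work with the canonical Adams towers $X_s = S_s \wedge X$, $Y_s$, $Z_s$, for which $X_s \to Y_s \to Z_s$ is a cofiber sequence and the extensions $(E_Y)$, $(E_Z)$ induce short exact sequences of $E_1$-cochain complexes (this exactness comes from the canonical resolution $C_*(A,-)$ being exact in the module variable, not from a horseshoe-lemma splicing, which would anyway still have to be realized topologically). But the heart of the theorem is the step you defer: justifying that the topological lifting problem defining $d_2$ on classes $q^*[x]$ can be computed through the maps $h$, $g$, $f$ so that the first lift is governed by $(E_Z)$ and the obstruction by $(E_Y)$. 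That is precisely what the paper proves, via Proposition~\ref{prop:pull-push}: for $w \in \pi_n(\Sigma X_{s+1,2})$ killed by $f$ and by $\beta'$, one has $\pm f(\alpha')^{-1}(w) = \gamma' g^{-1}\beta' h^{-1}(w)$, established by the fill-in (octahedral-type) axiom applied to Spanier--Whitehead duals and a passage to colimits, followed by a careful chase through Figure~\ref{chase}. Your sketch simply asserts this identification rather than proving it.

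The fallback you propose --- invoking the geometric boundary theorem of \cite{Bru78} twice, for $X \to Y \to Z$ and for its rotation --- does not work in the situation the theorem addresses. The geometric boundary theorem requires that the cofiber sequence induce a short exact sequence in cohomology (equivalently, that the Adams $E_2$-terms form a long exact sequence); the paper points out in the introduction that this happens exactly when $g^*$ is injective, surjective, or zero, in which case $i^* d_2 q^*$ is automatically zero and the theorem is vacuous. In the interesting case all of $K$, $I$, $C$ are nonzero, the $E_2$-terms of $X$, $Y$, $Z$ do not form a long exact sequence, and the modules $K$, $I$, $C$ are not cohomologies of spectra in the sequence, so neither application of the geometric boundary theorem has its hypotheses satisfied. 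Without a substitute for Proposition~\ref{prop:pull-push} (or an equivalent statement such as \cite{BG95}*{Lem.~2.2} or \cite{AM17}*{Lem.~9.3.2}), the two-stage lifting claim, and hence the proof, is incomplete; the sign bookkeeping you mention is genuinely the easy part.
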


Furthermore, $\delta_Y \delta_Z$ agrees, up to a sign, with the Yoneda
product with $e_Z e_Y$, where $e_Z \in \Ext_A^{1,0}(I, K)$, $e_Y \in
\Ext_A^{1,0}(C, I)$ and $e_Z e_Y \in \Ext_A^{2,0}(C, K)$ are the classes
associated to the extensions $(E_Z)$, $(E_Y)$ and
$$
0 \from C \longfrom H^*(Y) \overset{g^*}\longfrom H^*(Z)
	\longfrom K \from 0 \,,
$$
respectively.  See \cite{Mac63}*{Thm.~III.9.1}.
Note that, while the Adams $d_2$-differential in $E_2(X)$ generally
depends on the topological structure of the spectrum~$X$, our theorem
shows that the composite $i^* d_2 q^*$ only depends on the algebraic
structure given by the long exact sequence~\eqref{eq:les} of $A$-modules.

We thank the referee for suggesting that our arguments for
Theorem~\ref{thm:id2q} carry through as written for the $E$-based
Adams--Novikov spectral sequence, for a homotopy commutative ring
spectrum~$E$ with $E_* E$ flat over~$E_*$, replacing the spectrum $H$
by~$E$.   See Remark~\ref{rem:AdamsNovikov}.

We apply our first theorem to the special case of the (implicitly
$2$-complete) homotopy (co-)fiber sequence
\begin{equation} \label{eq:j}
j \longto ko \overset{\psi}\longto \Sigma^4 ksp \longto \Sigma j
\end{equation}
defining the connective image-of-$J$-spectrum~$j$.  Here $ko$ and $ksp$
denote connective real and quaternionic $K$-theory, respectively, and
$\psi$ denotes a lift of $\psi^3-1 \: ko \to ko$ over the $3$-connected
cover $bspin \simeq \Sigma^4 ksp$.  The homotopy groups of $ko$
and $ksp$, and the homomorphism induced by $\psi$, are well known,
and the resulting homotopy groups of~$j$ are easily calculated.

The full structure of the Adams spectral sequence converging to $\pi_*(j)$
is not as easy to determine.  In 1975, Davis~\cite{Dav75}
obtained the extension
\begin{equation*}
0 \from \Sigma^{-1} K \overset{q}\longfrom H^*(j)
	\overset{i}\longfrom C \from 0
	\tag{$E_j$}
\end{equation*}
with
\begin{align*}
C &\cong A/A(Sq^1, Sq^2, Sq^4) = A/\!/A(2) \\
K &\cong \Sigma^8 A/A(Sq^1, Sq^7, Sq^4 Sq^6 + Sq^6 Sq^4) \,,
\end{align*}
and the long exact sequence
$$
\cdots \to \Ext_A^{s-1,t}(C, \bF_2)
	\overset{\delta_X}\longto \Ext_A^{s,t}(\Sigma^{-1} K, \bF_2)
	\overset{q^*}\longto E_2^{s,t}(j)
	\overset{i^*}\longto \Ext_A^{s,t}(C, \bF_2)
\to \cdots \,,
$$
leading to the rather complex Adams $E_2$-term shown in
Figure~\ref{fig:E2d2j}.  Davis did not, however, determine
the differential pattern leading from $E_2(j)$ to the much
simpler abutment $\pi_*(j)$.  We can now resolve this problem,
by using Theorem~\ref{thm:id2q} to get a lower bound on the rank
of $d_2 \: E_2^{s,t}(j) \to E_2^{s+2,t+1}(j)$, and thereby deduce
Theorem~\ref{thm:E3j}.  The $(E_2,d_2)$-, $E_3$- and $E_\infty$-terms
for~$j$ are illustrated in Figures~\ref{fig:E2d2j}, \ref{upper}
and~\ref{infty}, respectively.  Our theorem shows that almost all of the
drastic amount of cancellation that must occur in this spectral sequence
occurs from~$E_2(j)$ to~$E_3(j)$, with the $d_2$-differential reducing
the Krull dimension from~$4$ to~$2$.

\begin{theorem} \label{thm:E3j}
Let $j$ be the connective image-of-$J$ spectrum at $p=2$.
There is an isomorphism
\begin{align*}
E_3(j) &\cong
	\bF_2[w_1] \{h_1, h_1^2, h_2, h_0 h_2, h_0^2 h_2, c_0, h_1 c_0\} \\
&\quad\oplus
	\bF_2[w_1^2] \{h_0^i h_3 \mid 0 \le i \le 3\} \\
&\quad\oplus
	\left(\bF_2[h_0, w_1^4] \{1, h_3 w_1^3\}
	\oplus \bF_2[w_1^4] \{h_0^i h_3 w_1 \mid 0 \le i \le 4\}\right) \,,
\end{align*}
with generators in $(s,t)$-bidegrees $|h_0| = (1,1)$, $|h_1| = (1,2)$,
$|h_2| = (1,4)$, $|h_3| = (1,8)$, $|c_0| = (3,11)$ and $|w_1| = (4,12)$.
The remaining nonzero differentials are
$$
d_r(h_0^i w_1^k) = h_0^{i+r+3} h_3 w_1^{k-1}
$$
for $r\ge3$, $i\ge0$ and~$\ord_2(k) = r-1$.
Hence
\begin{align*}
E_\infty(j) &\cong \bF_2[h_0] \\
&\quad\oplus
        \bF_2[w_1] \{h_1, h_1^2, h_2, h_0 h_2, h_0^2 h_2, c_0, h_1 c_0\} \\
&\quad\oplus
	\bigoplus_{r\ge1}
        \bF_2[w_1^{2^r}] \{h_0^i h_3 w_1^{2^{r-1}-1} \mid 0 \le i \le r+2\} \,.
\end{align*}
\end{theorem}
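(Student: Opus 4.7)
The plan is to pin down $d_2$ using Theorem~\ref{thm:id2q}, read off $E_3(j)$, and then determine the higher differentials by naturality and comparison with the known abutment $\pi_*(j)$. I would begin by recalling Davis's presentation of $E_2(j)$ as the middle of the long exact sequence attached to $(E_j)$: the quotient $C = A/\!/A(2)$ contributes the fully catalogued $\Ext_{A(2)}(\bF_2, \bF_2)$, accounting for the $h_0$-, $h_1$-, $h_2$-towers, the class $c_0$, and the $v_1^4$-periodicity generator $w_1$, while $\Sigma^{-1} K$ contributes the $h_3$-tower and its $w_1$-multiples. Each generator appearing in the stated formula for $E_3(j)$ lifts through $q^*$ or through $i^*$ to a specific class visible in Figure~\ref{fig:E2d2j}.

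Next, I would apply Theorem~\ref{thm:id2q} to~\eqref{eq:j}, with $X = j$, $Y = ko$, $Z = \Sigma^4 ksp$.  The 2-extension class $e_Z e_Y \in \Ext_A^{2,0}(C, K)$ classifies
$$
0 \from C \longfrom H^*(ko)
  \overset{\psi^*}\longfrom H^*(\Sigma^4 ksp)
  \longfrom K \from 0 \,,
$$
and can be read off from the standard $A$-module presentations $H^*(ko) = A/\!/A(1)$ and $H^*(\Sigma^4 ksp)$. Yoneda multiplication by $e_Z e_Y$ yields a concrete pairing $\Ext_A^{s,t}(\Sigma^{-1} K, \bF_2) \to \Ext_A^{s+2, t+1}(C, \bF_2)$, which by Theorem~\ref{thm:id2q} coincides up to a sign with $i^* d_2 q^*$. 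Every $\alpha$ with $(e_Z e_Y) \cdot \alpha \neq 0$ therefore forces $d_2(q^*(\alpha)) \neq 0$, giving a comprehensive lower bound on $d_2$. Independently, $\pi_*(j)$ can be computed directly from~\eqref{eq:j} using the classical 2-adic valuation of $\psi^3 - 1$ acting on $\pi_*(ko)$, producing an upper bound on the bidegree-wise dimensions of $E_\infty^{*,*}(j)$.  A bidegree-by-bidegree comparison then shows the Yoneda lower bound is exhaustive, giving the stated description of $E_3(j)$.

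For the higher differentials, $E_3(j)$ decomposes into an $h_0$-torsion bounded part and the rank-two free $\bF_2[h_0, w_1^4]$-summand on generators $\{1, h_3 w_1^3\}$, on which all remaining $d_r$ must act nontrivially since $\pi_*(j)$ is almost entirely $h_0$-torsion in positive degrees. The pattern $d_r(h_0^i w_1^k) = h_0^{i+r+3} h_3 w_1^{k-1}$ for $\ord_2(k) = r-1$ encodes the 2-adic divisibility of $\psi^3 - 1$; I would deduce it by combining $h_0$-linearity of Adams differentials, naturality along $j \to ko$ and $\Sigma^{-1}(\Sigma^4 ksp) \to j$, and the uniqueness of candidate targets in the appropriate bidegrees of $E_r(j)$, which together with the predicted dimension of $\pi_*(j)$ force the stated formula.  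The $E_\infty$-term then follows by collecting survivors.  The main obstacle will be the explicit identification of $e_Z e_Y$ and of its Yoneda pairing, sharply enough to produce a tight lower bound on $d_2$ in every relevant bidegree: since $K$ is non-cyclic with nontrivial relations, this requires an explicit minimal resolution of $K$ and careful bookkeeping across the full range of bidegrees where $\Sigma^{-1} K$ contributes.
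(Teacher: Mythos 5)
Your outline follows the paper's strategy (use Theorem~\ref{thm:id2q} to bound $d_2$ below, then compare with the known $\pi_*(j)$), but there is a genuine gap at the passage from $E_2(j)$ to $E_3(j)$: you claim that the Yoneda/$\delta_Y\delta_Z$ lower bound on $d_2$ is exhaustive, and it is not. Theorem~\ref{thm:id2q} only controls the composite $i^* d_2 q^*$, i.e.\ $d_2$ on classes in the image of $q^*$; it gives no information about $d_2$ on classes detected by $i^*$, in particular on the towers $h_0^i w_1^k$. In fact the rank of $d_2$ is strictly larger than that of $\delta_Y\delta_Z$ in bidegrees with $t-s \equiv 16 \bmod 32$: for $k \equiv 2 \bmod 4$ one must also have $d_2(h_0^i w_1^k) = h_0^{i+5} h_3 w_1^{k-1}$, forced by $\pi_{8k-1}(j) \cong \bZ/16k$ (the $h_0$-tower on $h_3 w_1^{k-1}$ must survive to height exactly $4+\ord_2(k)=5$, and a $d_r$ shifts the $h_0$-exponent by $r+3$, so $r=2$). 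If you stop where your proposal stops, you obtain only the upper bound of Figure~\ref{upper}, containing $\bF_2[h_0,w_1^2]\{1, h_3 w_1\}$, rather than the asserted $E_3(j)$, in which these towers are replaced by $\bF_2[h_0,w_1^4]\{1, h_3 w_1^3\} \oplus \bF_2[w_1^4]\{h_0^i h_3 w_1 \mid 0 \le i \le 4\}$. The comparison with the abutment that you invoke is exactly the needed tool, but it must be used to detect these extra $d_2$-differentials (and to see that they are $d_2$'s rather than longer differentials), not to conclude that the algebraic lower bound is already sharp.

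Two secondary points. First, you propose computing the Yoneda product with $e_Z e_Y$ via an explicit minimal resolution of the non-cyclic module $K$, which you rightly flag as the main obstacle; the paper sidesteps this entirely by computing $\delta_Y\delta_Z$ (its kernel and cokernel) from the long exact sequences induced by $(E_{ko})$ and $(E_{\Sigma^4 ksp})$, using the change-of-rings identifications of $E_2(ko)$ and $E_2(ksp)$ with $\Ext_{A(1)}$-groups and the known $\bF_2[w_1,w_2]$-free structure of $\Ext_{A(2)}(\bF_2,\bF_2)$ (Lemmas~\ref{lem:C-Hko-I}, \ref{lem:I-Hksp-K}, \ref{lem:dYdZ}); no resolution of $K$ is required. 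Second, your description of the $\Sigma^{-1}K$ contribution to $E_2(j)$ as ``the $h_3$-tower and its $w_1$-multiples'' is far too small: $\Ext_A(\Sigma^{-1}K,\bF_2)$ contains a shifted copy $\ldYdZ{x}$ of most of $\Ext_{A(2)}(\bF_2,\bF_2)$, and it is precisely this Krull-dimension-$4$ bulk that the $d_2 = \delta_Y\delta_Z$ cancellation removes; moreover, before any bookkeeping one must determine $\im(\delta_X)$ for the non-split extension $(E_j)$ (the paper shows $\delta_X(w_1) = \Sigma^{-1} h_0^5 \ldYdZ{h_2^2}$, again using $\pi_7(j)$), since citing Davis's additive $E_2(j)$ alone does not provide the $q^*$/$i^*$ decomposition needed to apply Theorem~\ref{thm:id2q}.
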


Here and below we use $\ord_p(k)$ to denote the $p$-adic valuation of~$k$.

\begin{remark}
We write $h_3 \in E_2^{1,8}(j)$ for the image of the usual class $h_3
\in E_2^{1,8}(S)$ under the unit map $e \: S \to j$.  This is also the
image $q^*(\Sigma^{-1} h_0\ldYdZ{h_2^2}) = q^*(i_Z^*(\Sigma^3 v'))$ for classes
$\Sigma^{-1} h_0\ldYdZ{h_2^2} \in \Ext_A^{1,8}(\Sigma^{-1} K, \bF_2)$ and
$\Sigma^3 v' \in E_2^{1,8}(\Sigma^3 ksp)$.  There is a well-defined 
action by powers of~$w_1$ on the latter two $\Ext$-groups.  The classes 
$h_3 w_1^{k-1}$ in $E_2(j)$ are therefore defined to be the images 
under~$q^*i_Z^*$ of the products $\Sigma^3 v' \cdot w_1^{k-1}$. 
We explain the notation~$\ldYdZ{x}$ in Lemma~\ref{lem:I-Hksp-K}, which
builds upon a notation~$\ldY{x}$ introduced in Lemma~\ref{lem:C-Hko-I}.

The unit map sends $\bF_2[h_0] \subset E_\infty(S)$ isomorphically
to $\bF_2[h_0] \subset E_\infty(j)$.  Direct calculation with
$\Ext_A$ shows that each class
$x \in \{h_1, h_1^2, h_2, h_0 h_2, h_0^2 h_2, c_0, h_1 c_0\}$
in $E_2(S)$ is mapped by~$e$ to the class with the same name in~$E_2(j)$.
Naturality with respect to the Adams periodicity operator~$Px = \<h_3,
h_0^4, x\>$ then shows that $P^{k-1} x$ maps to $x w_1^{k-1}$, for each
such~$x$ and $k\ge1$.  The classes $P^{k-1} x$ are all infinite cycles
by Adams vanishing~\cite{Ada66}*{Thm.~1.1}, and Theorem~\ref{thm:E3j}
shows that the classes $x w_1^{k-1}$ remain nonzero in~$E_\infty(j)$.
Hence $P^{k-1} x$ and $x w_1^{k-1}$ all survive to nonzero classes in
the $E_\infty$-terms, with $e$ mapping the former to the latter.
\end{remark}

\begin{remark}
The case of topological degree~$8k-1$ is well known to be significantly
more difficult.
Davis and Mahowald~\cite{DM89}*{Thm.~1.1} proved that any generator
$\rho_{8k-1}$ of the image of the $J$-homomorphism in~$\pi_{8k-1}(S)$ is
detected in $E_\infty(S)$ by a class of Adams filtration $4k-3-\ord_2(k)$,
and that this class supports an $h_0$-tower that ends in Adams
filtration~$4k$.  On the other hand, Theorem~\ref{thm:E3j} shows that
any generator $j_{8k-1}$ of $\pi_{8k-1}(j)$ is detected in $E_\infty(j)$
by $h_3 w_1^{k-1}$ in Adams filtration~$4k-3$, and that this supports
an~$h_0$-tower of the same height, ending in Adams filtration~$4k +
\ord_2(k)$.  It follows that $e$ maps the image-of-$J$ subgroup in
$\pi_{8k-1}(S)$ isomorphically to $\pi_{8k-1}(j)$, but increases the
Adams filtration of each class by exactly~$\ord_2(k)$.

The hidden $\eta$-extension in $E_\infty(j)$ from $h_3 w_1^{k-1}$
detecting~$j_{8k-1}$ to $c_0 w_1^{k-1}$ detecting~$\eta j_{8k-1}$
increases Adams filtration by~$2$ in each case, see Figure~\ref{infty}.
This is in contrast to the hidden $\eta$-extension in $E_\infty(S)$
from the class detecting~$\rho_{8k-1}$ to $P^{k-1} c_0$ detecting~$\eta
\rho_{8k-1}$ (except for $k=1$), which shifts Adams filtration by the
variable amount $2 + \ord_2(k)$.
\end{remark}

In Section~\ref{sec:jmod2} we make a similar analysis of the
Adams spectral sequence for the mod~$2$ reduction~$j/2$ of the
image-of-$J$ spectrum, which turns out to collapse at the $E_3$-term.
In Section~\ref{sec:jpodd} we summarize the corresponding calculations for
the $p$-primary image-of-$J$ spectrum, where $p$ is any odd prime.  See
Theorems~\ref{thm:E3jmod2} and~\ref{thm:E3jpodd} for precise statements,
and Figures~\ref{fig:jmod2} and~\ref{fig:E2d2jp3} for illustrations.

\section{Adams differentials in a homotopy cofiber sequence}

Our study depends on a standard functorial construction of the Adams
spectral sequence, which we now review.  Let
$$
\dots \to S_{s+1}
	\overset{\alpha}\longto S_s
	\to \dots
	\to S_1
	\overset{\alpha}\longto S_0 = S
$$
be the canonical mod~$p$ Adams tower for the sphere spectrum.
For each $s\ge0$ we have a homotopy cofiber sequence
$$
S_{s+1} \overset{\alpha}\longto S_s \overset{\beta}\longto
	H \wedge S_s \overset{\gamma}\longto \Sigma S_{s+1} \,,
$$
where $\beta$ is induced by the ring spectrum unit map $S \to H$.
Smashing these diagrams with~$X$ we obtain the canonical Adams tower
for~$X$, and we set $X_s = S_s \wedge X$.  The mod~$p$ Adams spectral
sequence for~$X$ is the homotopy spectral sequence associated to the
following diagram, where $\gamma$ has degree~$-1$.
$$
\xymatrix{
\cdots \ar[r]
	& X_{s+1} \ar[r]^-{\alpha}
	& X_s \ar[r] \ar[d]^-{\beta}
	& \cdots \ar[r]
	& X_1 \ar[r]^-{\alpha}
	& X \ar[d]^-{\beta} \\
& & H \wedge X_s \ar@{-->}[ul]^-{\gamma}
& & & H \wedge X \ar@{-->}[ul]^-{\gamma}
}
$$
When $H^*(X)$ is bounded below and of finite type, the K{\"u}nneth
theorem shows that the cohomology of the lower part is isomorphic to
a free resolution
$$
\dots \to C_s(A, H^*(X))
	\overset{\partial_s}\longto C_{s-1}(A, H^*(X))
	\to \dots \to C_0(A, H^*(X))
	\overset{\epsilon}\longto H^*(X) \,,
$$
where we set
$$
C_s(A, M) = A \otimes I(A)^{\otimes s} \otimes M
$$
for any $A$-module~$M$.  As usual, $I(A) = \ker(\epsilon \: A \to \bF_p)$
denotes the augmentation ideal.  The Hopf algebra $A$ acts diagonally
on this $(s+2)$-fold tensor product, which is nonetheless free because
of the untwisting isomorphism.  Furthermore, the Hurewicz homomorphism
induces isomorphisms
$$
\pi_{t-s}(H \wedge X_s)
	\overset{\cong}\longto
	\Hom_A(H^*(H \wedge X_s), \Sigma^{t-s} \bF_p)
	\cong C_A^{s,t}(H^*(X), \bF_p) \,,
$$
where
$$
C_A^{s,t}(M, \bF_p) = \Hom_A(C_s(A, M), \Sigma^t \bF_p) \,.
$$
By naturality, it follows that
$$
(E_1^{s,t}(X), d_1) \cong (C_A^{s,t}(H^*(X), \bF_p), \delta)
$$
with $\delta = \Hom(\partial, 1)$, which leads to the familiar formula
$$
E_2^{s,t}(X) \cong \Ext_A^{s,t}(H^*(X), \bF_p) \,.
$$

\begin{remark}
The canonical resolution $C_*(A, M) \to M$ is isomorphic to the normalized
bar resolution, and $C_A^*(M, \bF_p)$ is isomorphic to the normalized
cobar complex, but we do not need to make these isomorphisms explicit.
\end{remark}

\begin{remark}
\label{rem:AdamsNovikov}
If $H^*(X)$ is not bounded below and of finite type, we can instead work
with $H_*(X)$ as an $A_*$-comodule, where $A_* = H_*(H)$ is the dual
mod~$p$ Steenrod (Hopf) algebra.  We set
$$
C_s(A_*, M_*) = A_* \otimes J(A_*)^{\otimes s} \otimes M_*
$$
and
$$
C_{A_*}^{s,t}(\bF_p, M_*) = \Hom_{A_*}(\Sigma^t \bF_p, C_s(A_*, M_*)) \,,
$$
where $M_*$ is any $A_*$-comodule and $J(A_*) = \cok(\eta \: \bF_p \to
A_*)$.  Then
$$
E_1^{s,t}(X) = \pi_{t-s}(H \wedge X_s)
	\cong C_{A_*}^{s,t}(\bF_p, H_*(X))
$$
and $E_2^{s,t}(X) \cong \Ext_{A_*}^{s,t}(\bF_p, H_*(X))$, with $\Ext$
formed in $A_*$-comodules.  Our results all apply in this generality,
but for ease of comparison with the calculations in the next section we
prefer to write in terms of cohomology and $A$-modules.

Our arguments then carry through as written for the $E$-based
Adams--Novikov spectral sequence, for a homotopy commutative ring
spectrum~$E$ with $E_* E$ flat over~$E_*$, replacing the spectrum $H$
by~$E$, the Hopf algebra $A_*$ by the Hopf algebroid $(E_* E, E_*)$,
and the $A_*$-comodule $H_*(X)$ by the $E_* E$-comodule $E_*(X)$.
\end{remark}

The Adams differential $d_2 \: E_2^{s,t}(X) \to E_2^{s+2,t+1}(X)$ is given
by $d_2([x]) = [\beta(u)]$, where $x \in E_1^{s,t}(X)$ is a $d_1$-cycle,
$u \in \pi_{t-s-1}(X_{s+2})$, and $\gamma(x) = \alpha(u)$.
$$
\xymatrix{
\pi_*(X_{s+2}) \ar[r]^-{\alpha} \ar[d]^-{\beta}
	& \pi_*(X_{s+1}) \ar[r]^-{\alpha} \ar[d]^-{\beta}
	& \pi_*(X_s) \ar[d]^-{\beta} \\
E_1^{s+2,*}(X) & E_1^{s+1,*}(X) & E_1^{s,*}(X) \ar@{-->}[ul]^-{\gamma}
}
$$
Let $S_{s,r} = \cof(\alpha^r \: S_{s+r} \to S_s)$ be the mapping cone, so
that $S_{s,1} \simeq H \wedge S_s$ and we have homotopy cofiber sequences
\begin{align}
S_{s+2,1} &\overset{\alpha'}\longto S_{s+1,2}
	\overset{\beta'}\longto S_{s+1,1}
	\overset{\gamma'}\longto \Sigma S_{s+2,1} \label{eq:prime} \\
S_{s+1,2} &\overset{\alpha''}\longto S_{s,3}
	\overset{\beta''}\longto S_{s,1}
	\overset{\gamma''}\longto \Sigma S_{s+1,2} \,.
\end{align}
Letting $X_{s,r} = S_{s,r} \wedge X$ we get similar sequences by
smashing these with~$X$.
Then $d_2([x]) = [v]$ for any $v \in E_1^{s+2,t+1}(X)$ with $\gamma''(x)
= \alpha'(v)$ in the following diagram.
$$
\xymatrix{
E_1^{s+2,*}(X) \ar[r]^-{\alpha'}
	& \pi_*(X_{s+1,2})
	& E_1^{s,*}(X) \ar@{-->}[l]_-{\gamma''}
}
$$
In each case, a choice of $u$ or $v$ exists because $\beta \gamma(x)
= d_1(x) = 0$, and the difference between any two choices maps to zero
in $E_2^{s+2,t+1}(X)$.

\begin{figure}
$$
\xymatrix{
Y_{s+2,1} \ar[rr]^-{\alpha'} \ar[dd]^-{g}
	&& Y_{s+1,2} \ar[r]^-{\beta'} \ar[dd]^-{g}
	& Y_{s+1,1} \ar[r]^-{\gamma'} \ar[dd]^-{g}
	& \Sigma Y_{s+2,1} \ar[dd]^-{g} \\
& \Sigma^{-1} Z_{s,1} \ar[dr]^-{\gamma''} \ar[dd]|{\hole}^(0.3){h} \\
Z_{s+2,1} \ar[rr]^(0.3){\alpha'} \ar[dd]^-{h}
	&& Z_{s+1,2} \ar[r]^-{\beta'} \ar[dd]^-{h}
	& Z_{s+1,1} \ar[r]^-{\gamma'} \ar[dd]^-{h}
	& \Sigma Z_{s+2,1} \ar[dd]^-{h} \\
& X_{s,1} \ar[dr]^-{\gamma''} \\
\Sigma X_{s+2,1} \ar[rr]^-{\alpha'} \ar[d]^-{f}
	&& \Sigma X_{s+1,2} \ar[r]^-{\beta'} \ar[d]^-{f}
	& \Sigma X_{s+1,1} \ar[r]^-{\gamma'} \ar[d]^-{f}
	& \Sigma^2 X_{s+2,1} \ar[d]^-{f} \\
\Sigma Y_{s+2,1} \ar[rr]^-{\alpha'}
	&& \Sigma Y_{s+1,2} \ar[r]^-{\beta'}
	& \Sigma Y_{s+1,1} \ar[r]^-{\gamma'}
	& \Sigma^2 Y_{s+2,1}
}
$$
\caption{Horizontal and vertical homotopy cofiber sequences \label{horvert}}
\end{figure}

Smashing the continuation of~\eqref{eq:XYZ} and~\eqref{eq:prime}
together, we obtain the commutative diagram in Figure~\ref{horvert}
with horizontal and vertical homotopy cofiber sequences, up to some
signs which we suppress.
Applying $\pi_*$, we obtain a similar diagram with long exact rows
and columns.

\begin{proposition} \label{prop:pull-push}
Let $w \in \pi_n(\Sigma X_{s+1,2})$ satisfy $f(w) = 0$ and
$\beta'(w) = 0$.  Then
$$
\pm f (\alpha')^{-1} (w) = \gamma' g^{-1} \beta' h^{-1}(w)
$$
as subsets of $\pi_n(\Sigma Y_{s+2,1})$.
The indeterminacy of either expression is the image of $\pm f \gamma' =
\gamma' f$.
\end{proposition}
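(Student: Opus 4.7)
The plan is a diagram chase in Figure~\ref{horvert}, with careful coset bookkeeping to match the indeterminacies on both sides. The statement has the character of a $3 \times 3$ snake lemma for the grid of cofiber sequences obtained by smashing~\eqref{eq:XYZ} with~\eqref{eq:prime}.

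First I verify well-definedness and identify the indeterminacies. The hypothesis $\beta'(w) = 0$ places $w$ in $\im(\alpha')$ by exactness of the middle horizontal row, so $(\alpha')^{-1}(w)$ is a coset of $\ker(\alpha') = \im(\gamma')$, making the LHS equal to $fu + f\im(\gamma') = fu + \im(f\gamma')$ for any $u \in (\alpha')^{-1}(w)$. The hypothesis $f(w) = 0$ places $w$ in $\im(h)$ by exactness of the middle vertical column, so $h^{-1}(w)$ is nonempty; for any $z$ there, commutativity gives $h\beta'(z) = \beta'(w) = 0$, placing $\beta'(z)$ in $\ker(h) = \im(g)$, and $g^{-1}\beta'(z)$ is a coset of $\ker(g) = \im(f)$. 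A short calculation shows that varying $z$ within $h^{-1}(w) = z + \im(g)$ changes $g^{-1}\beta'(z)$ only within $\im(f)$, so the RHS is the single coset $\gamma'(y) + \im(\gamma'f)$ for any valid $z, y$.

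For the chase itself, I set $\sigma = fu - \gamma'(y)$ and aim to show $\sigma \in \im(\gamma'f)$. Applying $\alpha'$ annihilates both terms, the first because $\alpha'fu = f\alpha'u = fw = 0$ and the second because $\alpha'\gamma' = 0$, so $\sigma \in \ker(\alpha') = \im(\gamma')$, say $\sigma = \gamma'(y_0)$. Applying $g$ yields $\gamma'(gy_0) = g\gamma'(y_0) = g\sigma = gfu - \gamma'(gy) = 0 - \gamma'\beta'(z) = 0$, so $gy_0 \in \ker(\gamma') = \im(\beta')$. A final step, tracing back through the $3 \times 3$ grid and using the commutativity $f\gamma' = \pm \gamma'f$, identifies $y_0$ modulo $\im(\beta') = \ker(\gamma')$ with an element of $\im(f)$, so that $\sigma = \gamma'(y_0) \in \gamma'\im(f) = \im(\gamma'f)$.

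The main obstacle is the sign bookkeeping arising from $f\gamma' = \pm \gamma'f$, the standard sign from rotating a cofiber sequence past a map, which produces both the overall $\pm$ in the statement and the identification of the two forms of the indeterminacy. The closing step, identifying $y_0$ modulo $\im(\beta')$ with an element of $\im(f)$, is where all the structure of the $3 \times 3$ grid is used and is where I expect to spend the most care.
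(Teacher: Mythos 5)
Your setup is correct and matches the paper's bookkeeping: both sides are well-defined cosets of $\im(\gamma' f)$, and your verification of this (including the effect of changing $z$ within $h^{-1}(w)$) is fine.  The problem is the chase itself.  With $\sigma = fu - \gamma'(y)$, the relations you can extract from commutativity and exactness of the homotopy-group lattice of Figure~\ref{horvert} are exactly the two you state: $\alpha'\sigma = 0$, hence $\sigma \in \im(\gamma')$, and $g\sigma = 0$, hence $\sigma \in \ker(g) = \im(f)$.  That places $\sigma$ in $\im(\gamma') \cap \im(f)$, whereas the proposition requires the a priori smaller subgroup $\im(\gamma' f)$.  Your ``final step, tracing back through the $3\times 3$ grid'' is precisely the attempt to bridge this, and it does not go through: from $g y_0 \in \im(\beta')$, say $g y_0 = \beta' z_1$, a further chase only gives $h z_1 \in \ker(\beta') = \im(\alpha')$, not $h z_1 = 0$, so you cannot conclude $y_0 \in \im(f) + \ker(\gamma')$.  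The conclusion is not a formal consequence of a commutative (up to sign) diagram of long exact sequences; it is a secondary (Toda-bracket-type) compatibility, and one can have $\im(\gamma') \cap \im(f) \supsetneq \im(\gamma' f)$ in such abstract diagrams.

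This is exactly why the paper's proof does not chase: it uses dualizability of $X$, $Y$, $Z$ to rewrite $w$ as a map $\Sigma^{n-1}DX \to S_{s+1,2}$, invokes the fill-in axiom of the triangulated structure to produce the dotted map $\Sigma^n DY \to S_{s+1,1}$ compatible with both rows, and then deduces the coset equality from that single geometric fill-in, treating general $X$ by passage to (filtered) colimits of dualizable spectra.  The delicacy of such statements is underscored by the paper's own citations: the analogous claim in \cite{Mil81}*{Lem.~6.7} required the correction given in \cite{AM17}*{Lem.~9.3.2} (see also \cite{BG95}*{Lem.~2.2}).  So as written your proposal has a genuine gap at its decisive step; to repair it you need to import the triangulated-category input (fill-in/octahedron applied to the dualized cofiber sequence, or an equivalent construction at the level of spectra), not more diagram chasing in $\pi_*$.
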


\begin{proof}
If $X$, $Y$ and (hence) $Z$ are dualizable, then this follows from
the fill-in axiom for the triangulated structure on the stable
homotopy category, applied to the diagram below.
$$
\xymatrix{
\Sigma^{n-1} DX \ar[r]^-{\pm Dh} \ar[d] \ar[dr]^-{w}
	& \Sigma^n DZ \ar[r]^-{Dg} \ar[d]
	& \Sigma^n DY \ar[r]^-{Df} \ar@{..>}[d]
	& \Sigma^n DX \ar[d] \\
S_{s+2,1} \ar[r]^-{\alpha'}
	& S_{s+1,2} \ar[r]^-{\beta'}
	& S_{s+1,1} \ar[r]^-{\gamma'}
	& \Sigma S_{s+2,1}
}
$$
The general case follows by a passage to colimits.  See
also \cite{BG95}*{Lem.~2.2}, and~\cite{AM17}*{Lem.~9.3.2}
correcting~\cite{Mil81}*{Lem.~6.7}.
\end{proof}

\begin{proof}[Proof of Theorem~\ref{thm:id2q}]
The extensions~$(E_X)$, $(E_Y)$ and~$(E_Z)$ induce short exact sequences of
cochain complexes, giving factorizations of the maps of Adams $E_1$-terms
induced by $f$, $g$ and~$h$, as shown in Figure~\ref{chase}.

\begin{figure}
$$
\xymatrix@C-1.8pc{
& & & E_1^{s+1,t+1}(Y) \ar[r]^-{\gamma'} \ar@{->>}[d]_-{i_Y^*}
	\ar@/^4pc/[dd]^-{g}
	& E_1^{s+2,t+1}(Y) \\
& E_1^{s,t+1}(Z) \ar[dr]^-{\gamma''} \ar@{->>}[d]_-{i_Z^*}
	& & C_A^{s+1,t+1}(I, \bF_p) \ar@{ >->}[d]_-{q_Z^*} \\
& C_A^{s,t}(\Sigma^{-1} K, \bF_p) \ar@{ >->}[d]_-{q^*}
	& \pi_{t-s}(Z_{s+1,2}) \ar[dd]^-{h} \ar[r]^-{\beta'}
	& E_1^{s+1,t+1}(Z) \\
& E_1^{s,t}(X) \ar[dr]^-{\gamma''} \\
E_1^{s+2,t+1}(X) \ar[rr]^-{\alpha'} \ar@{->>}[d]_-{i^*}
	\ar@/^4pc/[dd]^-{f}
	& & \pi_{t-s}(\Sigma X_{s+1,2}) \\
C_A^{s+2,t+1}(C, \bF_p) \ar@{ >->}[d]_-{q_Y^*} \\
E_1^{s+2,t+1}(Y)
}
$$
\caption{Diagram for the proof of Theorem~\ref{thm:id2q} \label{chase}}
\end{figure}

Let $x \in C_A^{s,t}(\Sigma^{-1} K, \bF_p)$ be a cocycle, representing
an arbitrary cohomology class $[x] \in \Ext_A^{s,t}(\Sigma^{-1} K,
\bF_p)$.  Then $q^* x \in E_1^{s,t}(X)$ is a $d_1$-cycle, with class
$[q^* x] = q^* [x] \in E_2^{s,t}(X)$.  The differential $d_2 q^* [x]
\in E_2^{s+2,t+1}(X)$ is the class $[v]$ of any $v \in E_1^{s+2,t+1}(X)$
with $\alpha' v = w = \gamma'' q^* x$, where $w \in \pi_{t-s}(\Sigma
X_{s+1,2})$.  Hence $i^* d_2 q^* [x] \in \Ext_A^{s+2,t+1}(C, \bF_p)$
is the class $[i^* v]$ of $i^* v \in C_A^{s+2,t+1}(C, \bF_p)$.

The image $q_Y^* i^* v = f v \in E_1^{s+2,t+1}(Y)$ of $i^* v$ is thus an
element of $f (\alpha')^{-1} w$.  We find another element in the same
coset by applying Proposition~\ref{prop:pull-push} to~$w$.  First choose
an $\tilde x \in E_1^{s,t+1}(Z)$ that maps by~$i_Z^*$ to~$x$.  Then $h \gamma''
\tilde x = w = \alpha' v$, so $w \in \im(h) = \ker(f)$ and $w \in
\im(\alpha') = \ker(\beta')$.  Hence the hypothesis of the proposition
is satisfied, and we can calculate $f (\alpha')^{-1}(w)$ as $\gamma'
g^{-1} \beta' h^{-1}(w)$, up to a possible sign.

We have $\gamma'' \tilde x \in h^{-1}(w)$, so $d_1(\tilde x)
= \beta' \gamma'' \tilde x \in \beta' h^{-1}(w)$.  Since $x$
is a cocycle, $d_1(\tilde x)$ lifts uniquely over~$q_Z^*$ to a cocycle
$z \in C_A^{s+1,t+1}(I, \bF_p)$, as in the definition of
the connecting homomorphism
$$
\delta_Z \: \Ext_A^{s,t}(\Sigma^{-1} K, \bF_p)
	\longto \Ext_A^{s+1,t+1}(I, \bF_p)
$$
associated to the short exact sequence of cochain complexes induced
by~$(E_Z)$.  Hence $[z] = \delta_Z [x]$.

Next choose a $\tilde z \in E_1^{s+1,t+1}(Y)$ that maps by~$i_Y^*$ to~$z$.
Then $g \tilde z = d_1(\tilde x)$, so $\tilde z \in g^{-1} \beta'
h^{-1}(w)$.  The image $\gamma' \tilde z = d_1(\tilde z) = \bar y$
in $E_1^{s+2,t+1}(Y)$ then lies in the coset $\gamma' g^{-1} \beta' h^{-1}(w)$.
Moreover, since $z$ is a cocycle, $\bar y$ lifts uniquely over~$q_Y^*$ to
a cocycle $y \in C_A^{s+2,t+1}(C, \bF_p)$, as in the definition of the
connecting homomorphism
$$
\delta_Y \: \Ext_A^{s+1,t+1}(I, \bF_p)
	\longto \Ext_A^{s+2,t+1}(C, \bF_p)
$$
associated to~$(E_Y)$.  Hence $[y] = \delta_Y [z]$.

We have now shown that $i^* d_2 q^* [x]$ and $\delta_Y \delta_Z [x]$ in
$\Ext_A^{s+2,t+1}(C, \bF_p)$ are represented by cocycles $i^* v$ and $y$
in $C_A^{s+2,t+1}(C, \bF_p)$, respectively, which have the same image in
$E_1^{s+2,t+1}(Y)$, up to a sign and modulo the image of $f \gamma'$.
In view of the commutative diagram
$$
\xymatrix{
E_1^{s+1,t+1}(X) \ar[r]^-{\gamma'} \ar@{->>}[d]_-{i^*}
	& E_1^{s+2,t+1}(X) \ar@{->>}[d]_-{i^*} 
	\ar@/^4pc/[dd]^-{f} \\
C_A^{s+1,t+1}(C, \bF_p) \ar[r]^-{\delta} \ar@{ >->}[d]_-{q_Y^*}
	& C_A^{s+2,t+1}(C, \bF_p) \ar@{ >->}[d]_-{q_Y^*} \\
E_1^{s+1,t+1}(Y) \ar[r]^-{\gamma'}
	& E_1^{s+2,t+1}(Y)
}
$$
it follows that the cocycles $i^* v$ and $y$ agree in
$C_A^{s+2,t+1}(C, \bF_p)$, up to a sign and modulo
the image of $\delta$, i.e., up to a coboundary.  In other
words, $[i^*v] = \pm [y]$ in $\Ext_A^{s+2,t+1}(C, \bF_p)$.
\end{proof}

\section{The image-of-$J$ spectrum}

We now specialize to the case when the homotopy cofiber
sequence~\eqref{eq:XYZ} is the sequence~\eqref{eq:j} defining the
connective image-of-$J$ spectrum, implicitly completed at the prime
$p=2$.  The mod~$2$ cohomology and Adams spectral sequence for the ring
spectrum~$ko$ and its module spectrum~$ksp$ are well-known.
Let $A(n) \subset A$ be the subalgebra generated by $Sq^1, \dots,
Sq^{2^n}$.

\begin{proposition}[\cite{Sto63}, \cite{BR21}*{\S2.6}]\label{prop:cohko}
\leavevmode
\begin{enumerate}
\item
$H^*(ko) = A/A(Sq^1, Sq^2) = A/\!/A(1)$ and
$$
E_2(ko) = \Ext_{A(1)}(\bF_2, \bF_2)
	= \bF_2[h_0, h_1, v, w_1]/(h_0 h_1, h_1^3, h_1 v, v^2 + h_0^2 w_1)
$$
with algebra generators in $(s,t)$-bidegrees $|h_0| = (1,1)$, $|h_1| =
(1,2)$, $|v| = (3,7)$ and $|w_1| = (4,12)$.

\item
$\pi_*(ko) = \bZ[\eta, A, B]/(2 \eta, \eta^3, \eta A, A^2 - 4 B)$
with $|\eta| = 1$, $|A| = 4$ and $|B| = 8$.

\item
$H^*(ksp) = A/A(Sq^1, Sq^2 Sq^3)$ and
$$
E_2(ksp) = E_2(ko)\{1, v'\} / (h_1 \cdot 1, v \cdot 1 + h_0^2 \cdot v',
	v \cdot v' + w_1 \cdot 1)
$$
with module generators in $(s,t)$-bidegrees $|1| = (0,0)$ and $|v'| =
(1,5)$.
\end{enumerate}
\end{proposition}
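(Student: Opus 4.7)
The plan is to prove each of the three parts in turn, leveraging standard techniques from Steenrod-algebra cohomology and Bott periodicity, with the references to Stong and to Bruner--Rognes available as the most detailed sources.

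For part~(1), I would begin with Stong's identification $H^*(ko;\bF_2) = A/\!/A(1)$. The cleanest modern route is to exhibit a map $ko \to H$ inducing the quotient $A \to A/\!/A(1)$ on cohomology of the unit, and then argue by induction on the Postnikov tower (or via the Wood cofiber sequence $\Sigma ko \overset{\eta}\to ko \to ku$) that this is an isomorphism in each degree, using that $H^*(ku;\bF_2) = A/\!/E(1)$ is already known. Once $H^*(ko)$ is identified, the standard change-of-rings isomorphism
\[
\Ext_A^{s,t}(A\otimes_{A(1)}\bF_2,\bF_2) \cong \Ext_{A(1)}^{s,t}(\bF_2,\bF_2)
\]
reduces the $E_2$-term to an Ext computation over the $64$-dimensional subalgebra $A(1)$. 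For the explicit presentation, I would construct a minimal free resolution of $\bF_2$ over $A(1)$ (periodic of period~$(4,12)$ after the first few steps by the fact that $A(1)$ is Poincaré-like), read off the generators $h_0, h_1, v, w_1$ in the stated bidegrees, and verify the relations $h_0 h_1$, $h_1^3$, $h_1 v$, $v^2+h_0^2 w_1$ by explicit Yoneda composition in the resolution.

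For part~(2), the homotopy groups follow from Bott periodicity for the orthogonal spectrum $KO$, which gives a periodic pattern $\bZ,\bZ/2,\bZ/2,0,\bZ,0,0,0,\ldots$ of period~$8$. Connectivity restricts to nonnegative degrees, yielding the underlying abelian group. The generators $\eta\in\pi_1$, $A\in\pi_4$, $B\in\pi_8$ and the relations $2\eta=0$, $\eta^3=0$, $\eta A=0$, $A^2 = 4B$ come from the ring structure on $KO$ (for instance from the tensor product of real vector bundles, or equivalently the $E_\infty$ structure on $ko$), which can also be read off from the $E_\infty$-term of the Adams spectral sequence computed in~(1) together with the known multiplicative extensions $h_0\cdot h_0^3 = 0$ hidden as $A^2 - 4B = 0$.

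For part~(3), $\Sigma^4 ksp \simeq bspin$ is the $3$-connected cover of $bo$, so I would compute $H^*(ksp;\bF_2)$ by analyzing the fibration that kills $\pi_0, \pi_1, \pi_2$ of $ko$ or, more efficiently, by observing that $ksp$ is a module spectrum over $ko$ and fits into a short exact sequence of $A$-modules realizing the stated cyclic presentation $A/A(Sq^1, Sq^2 Sq^3)$. Given the cohomology, the $E_2$-term is obtained either by another application of change of rings (to an $A(1)$-module that is an extension of~$\bF_2$ by a shift of~$\bF_2$) or, equivalently, by using the $E_2(ko)$-module structure coming from the $ko$-module structure on~$ksp$ to identify $E_2(ksp)$ with the quotient of $E_2(ko)\{1,v'\}$ by the three relations listed. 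The key identities $h_1\cdot 1 = 0$, $v\cdot 1 + h_0^2 v' = 0$, and $v\cdot v' + w_1\cdot 1 = 0$ are again verified in the minimal resolution.

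The main obstacle is the multiplicative/module bookkeeping: getting the generator names, bidegrees, and relations to match consistently across $ko$, $ksp$, and (later) $j$. In particular the relation $v\cdot v' + w_1 = 0$ in $E_2(ksp)$ is the subtle piece, since it ties the periodicity class $w_1$ to the non-$ko$-linear generator $v'$; verifying this requires tracing the minimal $A(1)$-resolution of the underlying module and checking the Yoneda product carefully. Once these identifications are made, everything else follows mechanically, and the proposition is in the form needed for the cofiber-sequence analysis in the next section.
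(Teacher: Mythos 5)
The paper offers no proof of this proposition at all: it is quoted as a known result, with the references to Stong and to \cite{BR21}*{\S2.6} standing in for the argument, and your outline is essentially the standard proof carried out in those sources (Wood/Postnikov identification of $H^*(ko)$, change of rings to $A(1)$, a minimal $A(1)$-resolution for the generators and relations, Bott periodicity for $\pi_*(ko)$, and the $ko$-module structure for $ksp$). Two small slips are worth correcting, though neither is fatal to the plan. First, the $A(1)$-module relevant to $ksp$, namely $A(1)/A(1)(Sq^1, Sq^2Sq^3)$, is not an extension of $\bF_2$ by a shifted copy of $\bF_2$: it is the three-dimensional ``upside-down question mark'' with classes in degrees $0$, $2$, $3$, i.e.\ an extension of $\bF_2$ by $\Sigma^2 A(0)$, and it is this module whose $\Ext_{A(1)}$ you must compute (or, equivalently, you can run the long exact sequence in $\Ext_{A(1)}$ associated to that extension, which is where the relations $h_1\cdot 1=0$, $v\cdot 1 = h_0^2 v'$ and $v\cdot v' = w_1\cdot 1$ actually come from). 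Second, the relation $A^2 = 4B$ is not a hidden extension detected by some vanishing product: it is visible already in $E_\infty(ko)$ as the relation $v^2 = h_0^2 w_1$ from part~(1), since $A$ is detected by $v$ and $B$ by $w_1$; your phrase ``$h_0\cdot h_0^3=0$ hidden as $A^2-4B=0$'' should be replaced by this observation (together with injectivity of $\pi_8(ko)\to\pi_8(KO)$ if you want the integral statement on the nose).
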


The action of the Adams operation $\psi^3$ on the homotopy of $ko$,
and the resulting homotopy groups of the image-of-$J$ spectrum, are
also well-known.

\begin{proposition}[\cite{Ada62}, \cite{Dav75}*{Prop.~2},
	\cite{BR21}*{\S11.3}] \label{prop:pij}
\leavevmode
\begin{enumerate}
\item
$\psi^3(\eta) = \eta$, $\psi^3(A) = 3^2 A$ and $\psi^3(B) = 3^4 B$.
\item
$\pi_0(j) = \bZ$, $\pi_1(j) = \bZ/2\{\eta\}$ and, when $n\ge2$,
$$
\pi_n(j) = \begin{cases}
\bZ/16 k & \text{for $n = 8k-1$,} \\
\bZ/2 & \text{for $n \equiv 0 \mod 8$,} \\
(\bZ/2)^2 & \text{for $n \equiv 1 \mod 8$,} \\
\bZ/2 & \text{for $n \equiv 2 \mod 8$,} \\
\bZ/8 & \text{for $n \equiv 3 \mod 8$,} \\
0 & \text{otherwise.}
\end{cases}
$$
\end{enumerate}
\end{proposition}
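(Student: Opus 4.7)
Part~(1) records classical facts about Adams operations on connective real K-theory. The plan is to derive the Bott-class formulae $\psi^3(A) = 9 A$ and $\psi^3(B) = 81 B$ by comparison with the complex case: since the complexification $c \: ko \to ku$ is injective on $\pi_{4k}(ko)$, these are forced by the classical formula $\psi^3(u^k) = 3^k u^k$ on $\pi_{2k}(ku) = \bZ\{u^k\}$ of~\cite{Ada62}. The identity $\psi^3(\eta) = \eta$ on $\pi_1(ko) = \bZ/2$ can be seen from the fact that $\psi^3$ arises from the Adams operation on real vector bundles, which acts trivially on $\pi_1(BO) = \bZ/2$.

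For Part~(2), the plan is to apply the long exact sequence in homotopy to the cofiber sequence~\eqref{eq:j}. Since $\psi$ is, by construction, a lift of $\psi^3 - 1 \: ko \to ko$ through the $3$-connected cover, and the cover map $\Sigma^4 ksp \simeq bspin \to ko$ is an isomorphism on $\pi_n$ for $n \ge 4$, the induced map $\psi_* \: \pi_n(ko) \to \pi_n(\Sigma^4 ksp)$ agrees with $(\psi^3 - 1)_*$ on $\pi_n(ko)$ for $n \ge 4$. Part~(1) then makes this multiplication by $3^{2k} - 1$ on the free summand $\pi_{4k}(ko) \cong \bZ$, and zero on the $\bZ/2$-summands $\pi_{8k+1}(ko)$ and $\pi_{8k+2}(ko)$ (since $3^m$ is odd). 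Reading off the long exact sequence in each residue class modulo~$8$ will then yield each of the listed homotopy groups.

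The central calculation is in degree $n = 8k-1$ for $k \ge 1$, where the long exact sequence will give
$$
\pi_{8k-1}(j) \cong \cok\bigl(\times (3^{4k} - 1) \: \bZ \longto \bZ\bigr) \,.
$$
The identity $\ord_2(3^{4k} - 1) = 4 + \ord_2(k)$, obtained by factoring $3^{4k}-1 = (3^{2k}-1)(3^{2k}+1)$ with $\ord_2$-values $3 + \ord_2(k)$ and~$1$ on the two factors respectively, will then match this with~$\bZ/16k$ at the prime~$2$.

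The hard part will be the extension problem in degree $n = 8k+1$, where the long exact sequence gives only $0 \to \bZ/2 \to \pi_{8k+1}(j) \to \bZ/2 \to 0$. To show this splits as $(\bZ/2)^2$, I would exhibit a class $y \in \pi_{8k+1}(j)$ lifting $\eta B^k \in \pi_{8k+1}(ko)$ and having order~$2$, obtained for instance as the image under the unit $S \to j$ of a suitable $2$-torsion element of $\pi_{8k+1}(S)$ detected in Adams filtration~$1$ by~$h_1$. Combined with the independent $\bZ/2$-summand generated by $\eta$-multiplication on a generator of $\pi_{8k}(j)$, this would split the extension. Full details are recorded in~\cite{Dav75}*{Prop.~2} and~\cite{BR21}*{\S11.3}.
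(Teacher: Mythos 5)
The paper offers no proof of this proposition; it is quoted as known, with the citations to Adams, Davis and \cite{BR21}*{\S11.3} standing in for exactly the argument you give, namely the long exact homotopy sequence of~\eqref{eq:j} combined with $\psi^3 = 3^{2k}$ on $\pi_{4k}$. Your computation is sound: the identification of $\psi_*$ with $(\psi^3-1)_*$ in degrees $\ge 4$, the $2$-adic valuation $\ord_2(3^{4k}-1) = 4 + \ord_2(k)$ via the factorization $(3^{2k}-1)(3^{2k}+1)$, and the reading of the cokernels and kernels in each residue class modulo~$8$ all check out (keeping in mind, as you do, that $\bZ/(3^{4k}-1)$ only becomes $\bZ/16k$ after the implicit $2$-completion).

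One detail in your treatment of the extension in degrees $8k+1$ needs correcting. There is no $2$-torsion class in $\pi_{8k+1}(S)$ detected by $h_1$ in Adams filtration~$1$ for $k \ge 1$; the class you want is the Adams $\mu$-family element $\mu_{8k+1}$, which has order~$2$, maps to $\eta B^k$ under the unit $S \to ko$, and is detected by $P^k h_1$ in filtration $4k+1$. Its image under $S \to j$ is then an order-$2$ lift of the generator of $\ker(\psi_* \colon \pi_{8k+1}(ko) \to \pi_{8k+1}(bspin))$, which is all that is needed to split the extension $0 \to \bZ/2 \to \pi_{8k+1}(j) \to \bZ/2 \to 0$ (independence from the boundary summand is automatic, since any order-$2$ element mapping onto the quotient splits it; and indeed that boundary summand is $\eta$ times the generator of $\pi_{8k}(j)$, by $\pi_*(S)$-linearity of the connecting map). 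With that substitution your argument is complete and agrees with the one in the cited sources.
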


The $A$-module homomorphism $\psi^*$ is induced up from
the subalgebra~$A(2)$, so the calculation of its kernel, image
and cokernel is a finite algebraic problem.

\begin{proposition}[\cite{Dav75}*{Lem.~3}, \cite{AR05}*{Lem.~7.6(c)}]
\label{prop:KICStructure}
\leavevmode
\begin{enumerate}
\item
$\psi^* \: H^*(\Sigma^4 ksp) \to H^*(ko)$ is determined by
$\psi^*(\Sigma^4 1) = Sq^4$.
\item
$K = \ker(\psi^*) = \Sigma^8 A/A(Sq^1, Sq^7, Sq^4 Sq^6 + Sq^6 Sq^4)$.
\item
$I = \im(\psi^*) = \Sigma^4 A/A(Sq^1, Sq^4)$.
\item
$C = \cok(\psi^*) = A/A(Sq^1, Sq^2, Sq^4) = A /\!/ A(2)$.
\end{enumerate}
\end{proposition}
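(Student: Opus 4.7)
The plan is to verify the four parts in sequence, reducing the module-theoretic content to a finite computation over the subalgebra $A(2) \subset A$. This reduction is legitimate because $A$ is free as a right $A(2)$-module, so induction from $A(2)$-modules to $A$-modules is exact; both $H^*(ko)$ and $H^*(\Sigma^4 ksp)$ are so induced.

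For Part~(1), $H^*(\Sigma^4 ksp)$ is a cyclic $A$-module generated in degree~$4$, so $\psi^*$ is determined by $\psi^*(\Sigma^4 \iota) \in H^4(ko) = \bF_2\{Sq^4\}$, which is either $0$ or $Sq^4$. That the value is $Sq^4$ is Davis's classical calculation \cite{Dav75}*{Lem.~3}; it can be verified using the mod~$2$ Postnikov structure of~$ko$ together with the fact that $\psi$ lifts the nonzero operation $\psi^3 - 1$, whose action on $\pi_4(ko)$ is multiplication by~$8$ by Proposition~\ref{prop:pij}.

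Given Part~(1), Part~(3) is immediate: the cokernel $C = H^*(ko)/I$ is obtained from $A/\!/A(1) = A/A(Sq^1, Sq^2)$ by further imposing the relation $Sq^4 \cdot 1 = 0$, giving $C = A/A(Sq^1, Sq^2, Sq^4) = A/\!/A(2)$. For Part~(2), I would check using Adem relations that $Sq^1 \cdot Sq^4 = Sq^5 = Sq^2 Sq^3 + Sq^4 Sq^1$ and $Sq^4 \cdot Sq^4 = Sq^7 Sq^1 + Sq^6 Sq^2$ both lie in $A \cdot Sq^1 + A \cdot Sq^2$, hence vanish in $A/\!/A(1)$. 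This yields a surjection $\Sigma^4 A/A(Sq^1, Sq^4) \twoheadrightarrow I$, whose injectivity follows from a Poincar\'e series comparison, best performed at the level of $A(2)$-modules where the modules involved are finite-dimensional.

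For Part~(4), the short exact sequence $0 \to K \to \Sigma^4 A/A(Sq^1, Sq^2 Sq^3) \to I \to 0$ shows that $K$ is cyclic, generated by the class of $Sq^4 \cdot \Sigma^4 \iota$ in total degree~$8$, since the quotient map imposes exactly the new relation $Sq^4$ on top of $Sq^2 Sq^3$. To verify the listed annihilators, I would compute directly: $Sq^7 Sq^4 = 0$ in~$A$ itself (every Adem binomial coefficient vanishes mod~$2$); $Sq^1 \cdot Sq^4 = Sq^5 = Sq^2 Sq^3 + Sq^4 Sq^1 \in A(Sq^1, Sq^2 Sq^3)$; and $(Sq^4 Sq^6 + Sq^6 Sq^4) \cdot Sq^4$ lies in the same ideal by a somewhat longer Adem manipulation. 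The main obstacle is showing these three relations generate the \emph{entire} annihilator: I would confirm this by checking that the candidate presentation $\Sigma^8 A/A(Sq^1, Sq^7, Sq^4 Sq^6 + Sq^6 Sq^4)$ has the correct Poincar\'e series in a range, a finite check inside a suitable $A(n)$. This is precisely the computation carried out in \cite{Dav75}*{Lem.~3} and \cite{AR05}*{Lem.~7.6(c)}.
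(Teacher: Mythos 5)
Your proposal is correct and takes essentially the same route as the paper: the only genuinely topological input, part~(1), rests on Davis's Lemma~3 (exactly as in the paper, whose own proof only writes out the image case, reducing it to the finite claim that the annihilator of $Sq^4$ in $A(2)/\!/A(1)$ is $A(2)(Sq^1,Sq^4)$ and citing \cite{Dav75} and \cite{AR05} for the rest), and your identifications of $I$, $C$ and $K$ likewise reduce to Adem-relation checks plus finite dimension counts over $A(2)$, which is legitimate since everything in sight is induced up from $A(2)$. Two small blemishes: your item labels are permuted relative to the statement (your ``Part~(3)'' is the cokernel~(4), your ``Part~(2)'' is the image~(3), your ``Part~(4)'' is the kernel~(2)), and the aside that (1) can be verified from $\psi^3-1$ acting by~$8$ on $\pi_4(ko)$ is not sufficient as stated---since $8\equiv 0 \bmod 2$ one also needs that the Hurewicz image of $A$ has index exactly~$8$ in $H_4(ko;\bZ_{(2)})$, equivalently that the generator of $H^4(ko;\bZ_{(2)})$ evaluates to $\pm 8$ on~$A$, which is the substance of Davis's calculation that you in any case cite.
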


\begin{proof}
Only case~(3) may be new.  It amounts to the claim that the
$A(2)$-submodule of $A(2)/\!/A(1)$ generated by $Sq^4$ has annihilator
ideal $A(2)(Sq^1, Sq^4)$, which can be checked by direct calculation.
This is also implicit in \cite{DM82}*{Thm.~5.9}.
\end{proof}

We will see that $\Ext_A(K, \bF_2)$, $\Ext_A(I, \bF_2)$ and $\Ext_A(C,
\bF_2)$ are closely related, where the latter is explicitly known.

\begin{proposition}[\cite{SI67}, \cite{BR21}*{\S3.5}] \label{prop:ExtA2}
\begin{align*}
\Ext_A(C, \bF_2) &= \Ext_{A(2)}(\bF_2, \bF_2) \\
&= \bF_2[h_0, h_1, h_2, c_0, w_1, \alpha, \beta, d_0, e_0, g, \gamma,
	\delta, w_2]/(\sim)
\end{align*}
is a free $\bF_2[w_1, w_2]$-module, where $(\sim) = (h_0 h_1, \dots,
\delta^2)$ denotes an ideal generated by~$54$ explicit relations.
The generators are graded as follows.
$$
\begin{tabular}{ >{$}c<{$}|>{$}c<{$} >{$}c<{$} >{$}c<{$} >{$}c<{$}
        >{$}c<{$} >{$}c<{$} >{$}c<{$} >{$}c<{$} >{$}c<{$} >{$}c<{$}
        >{$}c<{$} >{$}c<{$} >{$}c<{$} }
& h_0 & h_1 & h_2 & c_0 & w_1 & \alpha
        & \beta & d_0 & e_0 & g & \gamma & \delta & w_2 \\
\hline
s & 1 & 1 & 1 & 3 & 4 & 3
        & 3 & 4 & 4 & 4 & 5 & 7 & 8 \\
t & 1 & 2 & 4 & 11 & 12 & 15
        & 18 & 18 & 21 & 24 & 30 & 39 & 56 \\
t-s & 0 & 1 & 3 & 8 & 8 & 12
	& 15 & 14 & 17 & 20 & 25 & 32 & 48
\end{tabular}
$$
\end{proposition}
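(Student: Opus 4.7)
The plan is to prove this in two stages: first reduce the calculation to one over the subalgebra $A(2)$, and then identify the structure of $\Ext_{A(2)}(\bF_2,\bF_2)$ as the claimed free $\bF_2[w_1,w_2]$-module with the listed generators and relations.

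For the first stage, I would invoke the change-of-rings isomorphism. Since $C = A/\!/A(2) = A\otimes_{A(2)}\bF_2$ and $A$ is free as a right $A(2)$-module (because $A(2)$ is a sub-Hopf-algebra of $A$, by Milnor--Moore), there is a natural isomorphism
$$
\Ext_A^{s,t}(A\otimes_{A(2)}\bF_2,\bF_2) \cong \Ext_{A(2)}^{s,t}(\bF_2,\bF_2).
$$
This establishes the first equality in the proposition without any calculation.

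For the second stage, I would cite the classical computation of Shimada--Iwai, whose approach is to build a minimal free resolution of $\bF_2$ over the $64$-dimensional algebra~$A(2)$ and read off the generators from the number of $A(2)$-module generators in each homological degree and internal degree. The bidegrees listed in the table match the indecomposables appearing in such a resolution. The multiplicative structure, and in particular the $54$ relations summarized as $(\sim)$, is obtained by computing Yoneda composites using the resolution, for example via the machine calculations described in \cite{BR21}*{\S3.5}. The freeness over $\bF_2[w_1,w_2]$ is then verified by exhibiting an explicit finite set of module generators (the nucleus, of which there are finitely many in low degrees) and checking by direct inspection, in each bidegree, that multiplication by $w_1$ and $w_2$ is injective on the classes not divisible by those elements.

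The genuinely hard part is the bookkeeping for the $54$ relations: these are not all forced by low-dimensional considerations and must be derived from the minimal resolution, with careful attention to Massey-product indeterminacies among $\alpha$, $\beta$, $\gamma$, $\delta$, $d_0$, $e_0$, and $g$. Given that this calculation is already fully tabulated in \cites{SI67}{BR21}, the cleanest route is simply to invoke those references for the multiplicative structure once change-of-rings has reduced the problem to $A(2)$.
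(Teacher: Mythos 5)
Your proposal is correct and matches what the paper does: the paper states this proposition without proof, attributing it to \cite{SI67} and \cite{BR21}*{\S3.5}, and your change-of-rings reduction $\Ext_A(A/\!/A(2),\bF_2)\cong\Ext_{A(2)}(\bF_2,\bF_2)$ (valid since $A$ is free over the sub-Hopf-algebra $A(2)$) together with the citation of those references for the multiplicative structure and $\bF_2[w_1,w_2]$-freeness is exactly the intended justification.
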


\begin{remark}
The $A$-modules $H^*(ko)$ and $H^*(ksp)$ are induced up from $A(1)$,
while the modules $K$, $I$ and~$C$, as well as the extensions~$(E_{ko})$
and~$(E_{\Sigma^4 ksp})$ below, are induced up from $A(2)$.  Hence the
long exact sequences in Lemmas~\ref{lem:C-Hko-I} and~\ref{lem:I-Hksp-K}
consist of $\Ext_{A(2)}(\bF_2, \bF_2)$-modules and -homomorphisms.
In particular, $w_1$ acts linearly on these sequences.  The module
$H^*(j)$ and the extension~$(E_j)$ are not induced up from~$A(2)$,
but from~$A(3)$, so the long exact sequence in the proof of
Theorem~\ref{thm:E3j} (below) is $\Ext_{A(3)}(\bF_2, \bF_2)$-linear.
It then follows from~\cite{Ada66}*{Lem.~4.4} that $w_1^2$ acts linearly
on that sequence.  Alternatively, we may use that $(E_j)$ is dual to
a square-zero extension of $A_*$-comodule algebras, which implies that
$\delta_X$ is a derivation.
\end{remark}

\begin{lemma} \label{lem:C-Hko-I}
The $A$-module extension
\begin{align*}
0 \from C \overset{q_Y}\longfrom H^*(ko)
	\overset{i_Y}\longfrom I \from 0
	\tag{$E_{ko}$}
\end{align*}
induces a long exact sequence
$$
\dots \to E_2^{s-1,t}(ko)
	\overset{i_Y^*}\longto \Ext^{s-1,t}_A(I, \bF_2)
	\overset{\delta_Y}\longto \Ext^{s,t}_A(C, \bF_2)
	\overset{q_Y^*}\longto E_2^{s,t}(ko)
\to \cdots
$$
with
$$
\bF_2[h_0, w_1] \{v\}
	\cong \ker(\delta_Y) \into \Ext_A^{*,*}(I, \bF_2)
$$
and
$$
\Ext_A^{*,*}(C, \bF_2) \onto \cok(\delta_Y)
	\cong \bF_2[h_0, w_1] \oplus \bF_2[w_1]\{h_1, h_1^2\} \,.
$$
For each nonzero $x \in \Ext_A^{s,t}(C, \bF_2)$ that maps trivially to
$\cok(\delta_Y)$ there is a unique lift $\ldY{x} \in \Ext_A^{s-1,t}(I, \bF_2)$
satisfying $\delta_Y(\ldY{x}) = x$, and $\Ext_A(I, \bF_2)$ consists
of these $\ldY{x}$, together with the free $\bF_2[h_0, w_1]$-module
on $i_Y^*(v) = h_0^3 \ldY{h_2}$.
\end{lemma}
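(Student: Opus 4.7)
My plan is to analyze the long exact sequence of $\Ext_A(-, \bF_2)$-groups arising from $(E_{ko})$ by change of rings. Since $H^*(ko) = A \otimes_{A(1)} \bF_2$ and $C = A \otimes_{A(2)} \bF_2$, Shapiro's lemma identifies $E_2(ko) \cong \Ext_{A(1)}(\bF_2, \bF_2)$ and $\Ext_A(C, \bF_2) \cong \Ext_{A(2)}(\bF_2, \bF_2)$, and under these identifications $q_Y^*$ becomes the restriction homomorphism along $A(1) \hookrightarrow A(2)$. The central computational step is then to evaluate this restriction on each generator of $\Ext_{A(2)}$ listed in Proposition~\ref{prop:ExtA2}: the classes $h_0, h_1, w_1$ come from $A(1)$-indecomposables and restrict to the classes of the same name in $\Ext_{A(1)}$, while $h_2$ is detected by $Sq^4 \notin A(1)$ and restricts to zero. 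I would then check case-by-case, using the explicit description in Proposition~\ref{prop:cohko}, that the higher generators $c_0, \alpha, \beta, d_0, e_0, g, \gamma, \delta, w_2$ all restrict to zero as well---this amounts to observing that $\Ext_{A(1)}$ contains nothing in the corresponding bidegrees outside the subalgebra already hit by $h_0, h_1, w_1$.

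This yields $\im(q_Y^*) = \bF_2[h_0, h_1, w_1]/(h_0 h_1, h_1^3) = \bF_2[h_0, w_1] \oplus \bF_2[w_1]\{h_1, h_1^2\}$, identifying $\cok(\delta_Y)$. The complementary summand of $E_2(ko)$ is the free $\bF_2[h_0, w_1]$-module on $v$ (using $v^2 = h_0^2 w_1$ and $h_1 v = 0$), and by exactness $i_Y^*$ carries it isomorphically onto $\ker(\delta_Y) = \im(i_Y^*)$, giving the inclusion $\bF_2[h_0, w_1]\{v\} \cong \ker(\delta_Y)$. For each nonzero $x \in \Ext_A(C, \bF_2)$ mapping trivially to $\cok(\delta_Y)$, exactness next provides a preimage $\ldY{x}$ with $\delta_Y(\ldY{x}) = x$, unique modulo $\ker(\delta_Y)$, and together with the $\bF_2[h_0, w_1]$-multiples of $i_Y^*(v)$ these exhaust $\Ext_A(I, \bF_2)$ by the long exact sequence.

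The remaining identification $i_Y^*(v) = h_0^3 \ldY{h_2}$ is the most delicate point. I would deduce the relation $h_0^3 h_2 = 0$ in $\Ext_A(C, \bF_2) \cong \Ext_{A(2)}(\bF_2, \bF_2)$ from the classical relation $h_1^3 = h_0^2 h_2$ (valid already in $\Ext_{A(2)}$) combined with $h_0 h_1 = 0$. Since $i_Y^*(v)$ is the unique nonzero element of $\ker(\delta_Y)$ in bidegree $(3,7)$, this places $h_0^3 \ldY{h_2}$ in the one-dimensional space $\bF_2 \cdot i_Y^*(v)$. The main obstacle is then to rule out $h_0^3 \ldY{h_2} = 0$; this step falls outside the formal long-exact-sequence chase and requires a small chain-level check, most readily carried out using a minimal $A(2)$-resolution of the cyclic module $I = \Sigma^4 A/A(Sq^1, Sq^4)$ from Proposition~\ref{prop:KICStructure}(3), computing $h_0^3$ acting on the generator $\ldY{h_2}$ of $\Ext_A^{0,4}(I, \bF_2) \cong \bF_2$.
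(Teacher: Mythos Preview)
Your overall strategy coincides with the paper's: identify $q_Y^*$ with the restriction $\Ext_{A(2)}(\bF_2,\bF_2)\to\Ext_{A(1)}(\bF_2,\bF_2)$, evaluate it on the algebra generators (the paper notes, as you do, that $h_2,\dots,w_2$ land in trivial target bidegrees), and read off $\cok(\delta_Y)$ and $\ker(\delta_Y)\cong\bF_2[h_0,w_1]\{v\}$ from exactness. Your handling of $i_Y^*(v)=h_0^3\,\ldY{h_2}$ is also in the same spirit; the paper likewise defers this to a direct calculation in $\Ext_A(I,\bF_2)$ or to the proof of the next lemma.

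There is, however, a real gap. You only establish that a lift $\ldY{x}$ exists and is ``unique modulo $\ker(\delta_Y)$'', whereas the lemma asserts \emph{absolute} uniqueness of $\ldY{x}$; this is precisely what makes the symbol $\ldY{x}$ well-defined and gives the last sentence of the lemma content beyond the tautological short exact sequence $0\to\ker(\delta_Y)\to\Ext_A(I,\bF_2)\to\im(\delta_Y)\to 0$. The paper supplies the missing step with a vanishing argument: $\ker(\delta_Y)\cong\bF_2[h_0,w_1]\{v\}$ is nonzero only in bidegrees $(s-1,t)$ with $t-s=8k+3$ and $s\ge 4k+4$, and one checks (e.g.\ from the $R_0$-module presentation of $\Ext_{A(2)}(\bF_2,\bF_2)$ with $R_0=\bF_2[g,w_1,w_2]$) that $\Ext_A^{s,t}(C,\bF_2)=0$ in exactly those bidegrees. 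Thus whenever a nonzero $x$ is present to be lifted, the indeterminacy group $\ker(\delta_Y)^{s-1,t}$ already vanishes. You should add this bidegree check; without it your argument does not prove the uniqueness clause, and the description of $\Ext_A(I,\bF_2)$ in terms of the $\ldY{x}$ remains dependent on unspecified choices.
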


\begin{proof}
The homomorphism $q_Y^* \: \Ext_A(C, \bF_2) \to E_2(ko)$ equals the
restriction homomorphism $\Ext_{A(2)}(\bF_2, \bF_2) \to \Ext_{A(1)}(\bF_2,
\bF_2)$, which sends $h_0$, $h_1$ and~$w_1$ to the elements with the
same names, and which sends the remaining algebra generators $h_2,
\dots, w_2$ to zero, because the corresponding target bidegrees
are trivial.  Hence this algebra homomorphism has image $\bF_2[h_0,
w_1] \oplus \bF_2[w_1]\{h_1, h_1^2\}$ and cokernel $\bF_2[h_0, w_1]
\{v\}$.  The uniqueness of the nonzero lifts $\ldY{x}$ follows from the fact that
$\Ext_A^{s,t}(C, \bF_2) = 0$ for those bidegrees in which $\ker(\delta_Y)$
is nonzero, that is, when $t-s = 8k+3$ and $s \geq 4k+4$.  This is an
immediate consequence of the presentation from \cite{BR21}*{Prop.~3.45}
of $\Ext_{A(2)}(\bF_2, \bF_2)$ as a direct sum of cyclic $R_0$-modules,
where $R_0 = \bF_2[g, w_1, w_2]$.  See also \cite{BR21}*{Fig.~3.12, 3.13}.
The relation $i_Y^*(v) = h_0 \cdot \ldY{h_0^2 h_2}$ holds, as can be
verified by direct calculation in $\Ext_A(I, \bF_2)$ or deduced as part
of the next proof.
\end{proof}

\begin{lemma} \label{lem:I-Hksp-K}
The $A$-module extension
\begin{align*}
0 \from I \overset{q_Z}\longfrom H^*(\Sigma^4 ksp)
	\overset{i_Z}\longfrom K \from 0
	\tag{$E_{\Sigma^4 ksp}$}
\end{align*}
induces a long exact sequence
$$
\dots \to E_2^{s-1,t}(\Sigma^4 ksp)
	\overset{i_Z^*}\longto \Ext^{s-1,t}_A(K, \bF_2)
	\overset{\delta_Z}\longto \Ext^{s,t}_A(I, \bF_2)
	\overset{q_Z^*}\longto E_2^{s,t}(\Sigma^4 ksp)
\to \cdots
$$
with
$$
\Sigma^4 \bF_2[h_0, w_1] \{v'\}
\cong \ker(\delta_Z) \into \Ext_A^{*,*}(K, \bF_2)
	$$
and
$$
\Ext_A^{*,*}(I, \bF_2) \onto \cok(\delta_Z) \cong
    \Sigma^4 \bigl( \bF_2[h_0, w_1] \oplus \bF_2[w_1]\{h_1 v', h_1^2 v'\} \bigr)\,.
$$
The submodule $\bF_2[h_0, w_1]\{\ldY{h_2}\} \oplus \bF_2[w_1]\{\ldY{c_0},
h_1 \ldY{c_0}\}$ of $\Ext_A(I, \bF_2)$ maps isomorphically to $\cok(\delta_Z)$.
For each nonzero $\ldY{x} \in \Ext_A^{s,t}(I, \bF_2)$ that maps trivially to
$\cok(\delta_Z)$ there is a unique lift $\ldYdZ{x} \in \Ext_A^{s-1,t}(K, \bF_2)$
satisfying $\delta_Z(\ldYdZ{x}) = \ldY{x}$, and $\Ext_A(K, \bF_2)$ consists
of these $\ldYdZ{x}$, together with the free $\bF_2[h_0, w_1]$-module
on $i_Z^*(\Sigma^4 v') = h_0 \ldYdZ{h_2^2}$.
\end{lemma}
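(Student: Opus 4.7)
The plan is to follow the template of the proof of Lemma~\ref{lem:C-Hko-I}, with the extension $(E_{\Sigma^4 ksp})$ playing the role of $(E_{ko})$ and the modules $I, K$ replacing $C, I$. Applying $\Ext_A(-, \bF_2)$ to $(E_{\Sigma^4 ksp})$ yields the stated long exact sequence; what remains is to identify $q_Z^*$ and $i_Z^*$ on the known pieces $\Ext_A(I, \bF_2)$ (from Lemma~\ref{lem:C-Hko-I}) and $E_2(\Sigma^4 ksp) = \Sigma^4 E_2(ksp)$ (from Proposition~\ref{prop:cohko}(3)).

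The central step is computing $q_Z^*$ on module generators. By the preceding remark the long exact sequence is $\Ext_{A(2)}(\bF_2, \bF_2)$-linear and in particular $\bF_2[h_0, w_1]$-linear, so it suffices to verify
$$
q_Z^*(\ldY{h_2}) = \Sigma^4 \cdot 1, \quad
q_Z^*(\ldY{c_0}) = \Sigma^4 h_1 v', \quad
q_Z^*(h_1 \ldY{c_0}) = \Sigma^4 h_1^2 v',
$$
together with the vanishing of $q_Z^*$ on the remaining $\bF_2[h_0, w_1]$-generators of $\Ext_A(I, \bF_2)$. Nonvanishing at the bidegrees $(0,4)$, $(2,11)$ and $(3,12)$ follows from exactness combined with the vanishing of the preceding term $\Ext_A^{s-1, t}(K, \bF_2)$ at each of those positions (an $A(2)$-level check, available since $K$ is induced from $A(2)$), which forces $q_Z^*$ to be injective there, and the targets are each one-dimensional. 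The vanishing on the remaining generators is a residual finite computation at the $A(2)$-level, entirely parallel to the one for $q_Y^*$ in the preceding lemma.

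Given this, $\im(q_Z^*) = \cok(\delta_Z) \cong \Sigma^4(\bF_2[h_0, w_1] \oplus \bF_2[w_1]\{h_1 v', h_1^2 v'\})$ follows, with the stated submodule $\bF_2[h_0, w_1]\{\ldY{h_2}\} \oplus \bF_2[w_1]\{\ldY{c_0}, h_1 \ldY{c_0}\}$ of $\Ext_A(I, \bF_2)$ mapping isomorphically onto it. Correspondingly, $\ker(i_Z^*) = \im(q_Z^*)$ coincides with the non-$v'$ summand of $E_2(\Sigma^4 ksp)$, so $i_Z^*$ is injective on the $v'$-summand $\Sigma^4 \bF_2[h_0, w_1]\{v'\}$, giving $\ker(\delta_Z) \cong \Sigma^4 \bF_2[h_0, w_1]\{v'\}$. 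For each $\ldY{x}$ mapping trivially to $\cok(\delta_Z)$, exactness produces a lift $\ldYdZ{x} \in \Ext_A(K, \bF_2)$; uniqueness reduces to the vanishing of $\ker(\delta_Z)$ in the bidegree of $\ldYdZ{x}$, verified by reading off degrees from Proposition~\ref{prop:ExtA2} just as in the proof of Lemma~\ref{lem:C-Hko-I}. The relation $i_Z^*(\Sigma^4 v') = h_0 \ldYdZ{h_2^2}$ is then confirmed in the one-dimensional group $\Ext_A^{1, 9}(K, \bF_2)$, with the left side nonzero by injectivity of $i_Z^*$ on the $v'$-tower and the right side nonzero because $\delta_Z(h_0 \ldYdZ{h_2^2}) = h_0 \ldY{h_2^2} \neq 0$ in $\Ext_A(I, \bF_2)$.

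The main obstacle is this Step~2: pinning down $q_Z^*$ on generators. While $\bF_2[h_0, w_1]$-linearity and the long exact sequence reduce the problem drastically, a residual finite $A(2)$-level computation is needed to confirm the precise nonvanishing pattern on $\ldY{h_2}, \ldY{c_0}, h_1\ldY{c_0}$ and the vanishing on every other $\bF_2[h_0, w_1]$-generator of $\Ext_A(I, \bF_2)$.
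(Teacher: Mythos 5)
Your overall strategy matches the paper's (lower bound on $\im(q_Z^*)$ by exactness plus linearity, upper bound by a finite check of module generators against the sparse bidegrees of $E_2(\Sigma^4 ksp)$, then read off $\ker(\delta_Z)$, $\cok(\delta_Z)$ and the lifts), but your justification of the final relation $i_Z^*(\Sigma^4 v') = h_0\ldYdZ{h_2^2}$ contains a genuine error. You claim the right-hand side is nonzero ``because $\delta_Z(h_0\ldYdZ{h_2^2}) = h_0\ldY{h_2^2} \neq 0$ in $\Ext_A(I,\bF_2)$.'' In fact $h_0 h_2^2 = 0$ in $\Ext_{A(2)}(\bF_2,\bF_2) = \Ext_A(C,\bF_2)$ (the $6$-stem consists of $h_2^2$ alone), so $h_0\ldY{h_2^2}$ lies in $\ker(\delta_Y) \cong \bF_2[h_0,w_1]\{v\}$, which is zero in bidegree $(2,9)$; hence $h_0\ldY{h_2^2} = 0$. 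Indeed the lemma itself forces $\delta_Z(h_0\ldYdZ{h_2^2}) = 0$, since $h_0\ldYdZ{h_2^2} = i_Z^*(\Sigma^4 v')$ lies in $\im(i_Z^*) = \ker(\delta_Z)$ — it is the bottom class of that kernel. So your argument shows only that $i_Z^*(\Sigma^4 v')$ generates the one-dimensional group $\Ext_A^{1,9}(K,\bF_2)$; to identify it with $h_0\ldYdZ{h_2^2}$ you still need an independent proof that $h_0\ldYdZ{h_2^2} \neq 0$. The paper gets this from the presentation of $K$: its degree-$8$ generator is annihilated by $Sq^1$, so in a minimal resolution $h_0$ acts nontrivially from $\Ext_A^{0,8}(K)$ into $\Ext_A^{1,9}(K)$.

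Two smaller points. First, your third injectivity bidegree should be $(3,13)$, not $(3,12)$, and establishing injectivity of $q_Z^*$ there requires $\Ext_A^{2,13}(K,\bF_2)=0$, which is a second-syzygy computation not visible from the presentation of $K$; the paper avoids it by instead using $h_1$-linearity applied to $q_Z^*(\ldY{c_0}) = \Sigma^4 h_1 v'$ (only connectivity and $\Ext_A^{1,11}(K)=0$ are needed). Second, $\Ext_A(I,\bF_2)$ is not finitely generated over $\bF_2[h_0,w_1]$ (powers of $g$ and $w_2$ are indecomposable), so ``vanishing on the remaining $\bF_2[h_0,w_1]$-generators'' is not literally a finite check; you must use the full $\Ext_{A(2)}$-linearity you invoke, working with the finitely many $R_0[h_0]$-module generators ($R_0 = \bF_2[g,w_1,w_2]$) as in the cited presentation of $\Ext_{A(2)}(\bF_2,\bF_2)$, which is exactly how the paper makes the upper-bound inspection finite.
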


\begin{proof}
We must prove that the image of
$$
q_Z^* \: \Ext_A(I, \bF_2) \longto E_2(\Sigma^4 ksp)
$$
is exactly $\Sigma^4 \bigl( \bF_2[h_0, w_1] \oplus \bF_2[w_1]\{h_1 v',
h_1^2 v'\} \bigr)$.  From the $A$-module presentation
of~$K$ it follows that $\Ext_A^{s,t}(K, \bF_2) = 0$ for $t-s < 8$ and for
$(s,t) = (1,11)$.  By exactness,
$\ldY{h_2}$ and $\ldY{c_0}$ in $\Ext_A(I, \bF_2)$ must map
nontrivially to $\Sigma^4 1$ and $\Sigma^4 h_1 v'$ in $E_2(\Sigma^4 ksp)$.
By $h_0$-linearity this implies that $h_0^i \cdot \ldY{h_2}$
maps to $\Sigma^4 h_0^i$ for all $i\ge0$.  
By $h_1$-linearity it also implies that $h_1 \ldY{c_0}$ maps to
$\Sigma^4 h_1^2 v'$.  These results propagate $w_1$-linearly, and show
that the image is at least as large as claimed.

To show that it is no larger, we use the presentation from
\cite{BR21}*{Prop.~3.45} of $\Ext_{A(2)}(\bF_2, \bF_2)$ as a direct sum
of cyclic $R_0$-modules, where $R_0 = \bF_2[g, w_1, w_2]$.  By inspection,
each $R_0[h_0]$-module generator of $\Ext_A(I, \bF_2)$, other than
$\ldY{h_2}$, $\ldY{c_0}$ and~$h_1 \ldY{c_0}$, maps to a trivial bidegree
of $E_2(\Sigma^4 ksp)$.  Hence all of these generators map to zero, and
the image from~$\Ext_A(I, \bF_2)$ does not meet
$\Sigma^4 \bF_2[h_0, w_1] \{v'\}$.

The uniqueness of the nonzero lifts $\ldYdZ{x}$ follows, as in the proof
of Lemma~\ref{lem:C-Hko-I}, from the fact that $\Ext_A^{s,t}(C,
\bF_2) = 0$ when $t-s = 8k+6$ and $s \geq 4k+3$.
From the presentation of $K$, with a generator in degree~$8$ that is
annihilated by~$Sq^1$, we see that $i_Z^*(\Sigma^4 v') = h_0 \cdot
\ldYdZ{h_2^2}$ is nonzero in $\Ext_A^{1,9}(K, \bF_2)$.
\end{proof}

\begin{lemma} \label{lem:dYdZ}
\leavevmode
\begin{enumerate}
\item
The kernel of $\delta_Y \delta_Z \: \Ext_A^{*,*}(K, \bF_2)
\to \Ext_A^{*+2,*}(C, \bF_2)$ is
$$
\ker(\delta_Y \delta_Z)
	= \bF_2[h_0, w_1] \{h_0\ldYdZ{h_2^2}\}
	\cong \Sigma^4 \bF_2[h_0, w_1] \{v'\} \,.
$$
\item
The cokernel of $\delta_Y \delta_Z \: \Ext_A^{*-2,*}(K, \bF_2)
	\to \Ext_A^{*,*}(C, \bF_2)$ is
$$
\cok(\delta_Y \delta_Z) = \bF_2[h_0, w_1] \oplus
	\bF_2[w_1]\{h_1, h_1^2, h_2, h_0 h_2, h_0^2 h_2, c_0, h_1 c_0\} \,.
$$
\end{enumerate}
\end{lemma}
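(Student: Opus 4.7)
The plan is to exploit the direct-sum decomposition of $\Ext_A(I, \bF_2)$ supplied by Lemma~\ref{lem:I-Hksp-K}.  Write
\[
N := \bF_2[h_0, w_1]\{\ldY{h_2}\} \oplus \bF_2[w_1]\{\ldY{c_0}, h_1\ldY{c_0}\} \,,
\]
the submodule that maps isomorphically to $\cok(\delta_Z)$, so that $\Ext_A(I, \bF_2) = \im(\delta_Z) \oplus N$.  The observation that drives both parts is that, by Lemma~\ref{lem:C-Hko-I}, the kernel $\ker(\delta_Y) = \bF_2[h_0, w_1]\{i_Y^*(v)\} = \bF_2[h_0, w_1]\{h_0^3 \ldY{h_2}\}$ sits entirely inside the summand~$N$, and is therefore disjoint from $\im(\delta_Z)$.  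All $h_0, h_1, w_1$-linearity arguments below are legitimate by the remark preceding Lemma~\ref{lem:C-Hko-I}.

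For part~(1), $\ker(\delta_Y \delta_Z) = \delta_Z^{-1}(\ker(\delta_Y) \cap \im(\delta_Z))$.  By the disjointness above, $\ker(\delta_Y) \cap \im(\delta_Z) = 0$, hence $\ker(\delta_Y \delta_Z) = \ker(\delta_Z)$, which Lemma~\ref{lem:I-Hksp-K} identifies with $\bF_2[h_0, w_1]\{h_0 \ldYdZ{h_2^2}\} \cong \Sigma^4 \bF_2[h_0, w_1]\{v'\}$.

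For part~(2), note $\im(\delta_Y \delta_Z) = \delta_Y(\im(\delta_Z))$.  The disjointness makes $\delta_Y$ injective on $\im(\delta_Z)$, and combined with $\ker(\delta_Y) \subset N$ yields an internal direct sum $\im(\delta_Y) = \delta_Y(\im(\delta_Z)) \oplus \delta_Y(N)$ (if $\delta_Y(m)=\delta_Y(n)$ with $m\in\im(\delta_Z)$ and $n\in N$, then $m-n \in \ker(\delta_Y) \subset N$, forcing $m\in\im(\delta_Z)\cap N = 0$).  The submodule $C' := \bF_2[h_0, w_1] \oplus \bF_2[w_1]\{h_1, h_1^2\}$ of $\Ext_A(C, \bF_2)$ generated by $1, h_1, h_1^2$ maps isomorphically onto $\cok(\delta_Y)$ under $q_Y^*$, so $\Ext_A(C, \bF_2) = \im(\delta_Y) \oplus C'$.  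Combining the two splittings gives
\[
\cok(\delta_Y \delta_Z) \cong \delta_Y(N) \oplus C'
\]
as $\bF_2[h_0, w_1]$-modules.  Finally, $h_0$- and $h_1$-linearity of~$\delta_Y$ applied to the generators of~$N$ produces $\delta_Y(h_0^i \ldY{h_2}) = h_0^i h_2$ for $0 \le i \le 2$, with $h_0^3 h_2 = \delta_Y(h_0^3 \ldY{h_2}) = 0$, together with $\delta_Y(\ldY{c_0}) = c_0$ and $\delta_Y(h_1 \ldY{c_0}) = h_1 c_0$.  Hence $\delta_Y(N) = \bF_2[w_1]\{h_2, h_0 h_2, h_0^2 h_2, c_0, h_1 c_0\}$, and adding $C'$ delivers the stated expression in~(2).

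The substantive work has already been absorbed into Lemmas~\ref{lem:C-Hko-I} and~\ref{lem:I-Hksp-K}; the only non-formal input is the placement of $\ker(\delta_Y)$ strictly inside the complement~$N$ of $\im(\delta_Z)$, and once that is pinned down the rest is bookkeeping with two short exact sequences and $h_0$-linearity.
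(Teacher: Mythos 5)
Your proposal is correct and is essentially the paper's argument in different packaging: the paper runs the kernel--cokernel exact sequence for the composite $\delta_Y\delta_Z$ and observes that the connecting map $\ker(\delta_Y)\to\cok(\delta_Z)$ is injective because $i_Y^*(v)=h_0^3\ldY{h_2}$ maps to $\Sigma^4 h_0^3$, which is exactly your statement that $\ker(\delta_Y)$ lies in the complement $N$ of $\im(\delta_Z)$. The identification of the cokernel via $\delta_Y(\ldY{h_2})=h_2$, $\delta_Y(\ldY{c_0})=c_0$ and $h_0$-, $h_1$-, $w_1$-linearity is likewise the same; your explicit splittings just replace the six-term sequence bookkeeping.
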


\begin{proof}
In the exact kernel--cokernel sequence
\begin{multline*}
0 \to \ker(\delta_Z) \longto \ker(\delta_Y \delta_Z)
	\overset{\delta_Z}\longto \ker(\delta_Y) \\
\overset{\partial}\longto \cok(\delta_Z) \overset{\delta_Y}\longto
	\cok(\delta_Y \delta_Z) \longto \cok(\delta_Y) \to 0
\end{multline*}
of $\Ext_{A(2)}(\bF_2, \bF_2)$-modules,
the connecting homomorphism $\partial$ can be
identified with the composition
\begin{align*}
\bF_2[h_0, w_1]\{v\} \cong \ker(\delta_Y)
	&\into \Ext_A^{*,*}(I, \bF_2) \\
	&\onto \cok(\delta_Z)
\cong \Sigma^4(\bF_2[h_0, w_1] \oplus \bF_2[w_1]\{h_1 v', h_1^2 v'\}) \,,
\end{align*}
which maps $v$ via $h_0^3 \cdot \ldY{h_2}$ to $\Sigma^4 h_0^3$.  This is
injective, and therefore $\ker(\delta_Z) = \ker(\delta_Y \delta_Z)$.

Since $\delta_Y(\ldY{h_2}) = h_2$ and $\delta_Y(\ldY{c_0}) = c_0$,
the image $\delta_Y(\cok(\delta_Z))$ is the submodule
$ \bF_2[w_1]\{h_2, h_0 h_2, h_0^2 h_2, c_0, h_1 c_0\}$
in $\cok(\delta_Y \delta_Z)$.
%
\end{proof}

Combining these lemmas, $\Ext_A^{*,*}(K, \bF_2)$ is obtained in 
three steps from the algebra $\Ext_A^{*,*}(C, \bF_2)$ by
\begin{itemize}
\item
taking the kernel of the map onto
$$
\bF_2[h_0, w_1] \oplus
\bF_2[w_1]\{h_1, h_1^2, h_2, h_0 h_2, h_0^2 h_2, c_0, h_1 c_0\} \,,
$$ 
\item
shifting by $(s,t) = (-2,0)$ to lift over $\delta_Y\delta_Z$,
denoting the lift of $x$ by $\ldYdZ{x}$, and
\item
extending by $\Sigma^4 \bF_2[h_0, w_1] \{i_Z^*(v')\}$ above $\ldYdZ{h_2^2}$;
that is, $i_Z^*(\Sigma^4 v') = h_0 \ldYdZ{h_2^2}$.
\end{itemize}
We do not introduce notation for the identification
$$
\Ext_A^{*,*+1}(K, \bF_2) = \Ext_A^{*,*}(\Sigma^{-1} K, \bF_2) \,,
$$
but note that
$$
\ker(\delta_Y \delta_Z) 
= \Sigma^{-1} \bF_2[h_0, w_1]\{h_0\ldYdZ{h_2^2}\}
\cong \Sigma^3 \bF_2[h_0, w_1]\{v'\}
$$
when bigraded as in the right hand expression, and that 
$$ \ldYdZ{h_2^2} \in \Ext_A^{0,8}(K, \bF_2) \cong \Ext_A^{0,7}(\Sigma^{-1} K, \bF_2).$$


\begin{proposition}[\cite{Dav75}*{Thm.~1(i)}, \cite{AR05}*{Lem.~7.10(c)}]
	\label{prop:cohj}
The $A$-module extension
\begin{equation}
0 \to C \overset{i}\longto H^*(j) \overset{q}\longto
	\Sigma^{-1} K \to 0
	\tag{$E_j$} 
\end{equation}
is given by
$$
H^*(j) = \frac{A\{g_0, g_7\}}
	{A(Sq^1 g_0, Sq^2 g_0, Sq^4 g_0, Sq^8 g_0 + Sq^1 g_7,
	Sq^7 g_7, (Sq^4 Sq^6 + Sq^6 Sq^4) g_7)}
$$
with $i(1) = g_0$ and $q(g_7) = \Sigma^7 1$.  In particular, it is
induced up from an extension over~$A(3)$.
\end{proposition}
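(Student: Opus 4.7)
The plan is to identify $H^*(j)$ explicitly as an $A$-module by starting from the extension $(E_j)$ established via Proposition~\ref{prop:KICStructure} and pinning down two module generators together with the extension data. First I would set $g_0 = f^*(1) \in H^0(j)$, where $f \colon j \to ko$: since $f^*$ factors as the quotient $H^*(ko) \onto C = A/\!/A(2)$ followed by $i$, this immediately gives $Sq^1 g_0 = Sq^2 g_0 = Sq^4 g_0 = 0$ and shows that $g_0$ generates the submodule $i(C) \subset H^*(j)$. Next, since $C$ vanishes in degrees $1,2,3,5,6,7$ (no admissible monomial in $A/\!/A(2)$ lies in these degrees), there is a unique lift $g_7 \in H^7(j)$ with $q(g_7) = \Sigma^7 1$, the degree $7$ generator of $\Sigma^{-1} K$.

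The three defining relations of $\Sigma^{-1} K$ then force the elements $Sq^1 g_7$, $Sq^7 g_7$ and $(Sq^4 Sq^6 + Sq^6 Sq^4) g_7$ to lie in $i(C)$, and the task reduces to identifying them. In degree $8$, $C$ is one-dimensional, spanned by $Sq^8 g_0$, so $Sq^1 g_7$ is either $0$ or $Sq^8 g_0$. I would show it equals $Sq^8 g_0$ by tracing through the connecting map $h^* \colon \Sigma H^*(j) \to H^*(\Sigma^4 ksp)$ of the cofiber sequence~\eqref{eq:j}: by Proposition~\ref{prop:KICStructure}(1), $h^*$ sends $\Sigma g_7$ to the generator $Sq^4 \cdot \Sigma^4 1$ of $K$ in degree $8$, and the nontrivial extension is then pinned down by the requirement that the resulting presentation be consistent with the Poincaré series of $H^*(j)$ determined by $(E_j)$. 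In degrees $14$ and $17$ the corresponding cosets in $C$ can be verified to vanish by direct computation in $A/\!/A(2)$, most cleanly by checking Adem-relation compatibility (for instance, $Sq^1 \cdot (Sq^7 g_7)$ must vanish in $H^*(j)$, and any nonzero candidate in $C_{14}$ would fail this test).

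With the presentation fixed, a Poincaré-series comparison confirms that the proposed $A$-module agrees with $H^*(j)$ degree by degree. The claim that $(E_j)$ is induced up from $A(3)$ follows because every Steenrod square appearing in the relations is $Sq^k$ with $k \leq 8$ and therefore lies in $A(3) = \langle Sq^1, Sq^2, Sq^4, Sq^8 \rangle$; letting $\bar M$ be the $A(3)$-module with the stated generators and relations, one verifies $H^*(j) \cong A \otimes_{A(3)} \bar M$ by another Poincaré-series check.

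The principal obstacle is pinning down the extension relation $Sq^1 g_7 = Sq^8 g_0$. Unlike the other two, which reduce to finite algebraic checks about the internal structure of $C$, this relation encodes genuine topological information about the map~$\psi$ and requires either traversing the connecting homomorphism of the cofiber sequence using $\psi^*(\Sigma^4 1) = Sq^4$, or, equivalently, computing the extension class of $(E_j)$ in $\Ext_A^1(\Sigma^{-1} K, C)$ and observing it is nonzero.
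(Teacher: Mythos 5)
Your reduction to the three values $Sq^1 g_7$, $Sq^7 g_7$ and $(Sq^4 Sq^6 + Sq^6 Sq^4) g_7$ lying in $i(C)$, and your treatment of the degree-$14$ and degree-$17$ cases (via $Sq^1 Sq^7 = 0$ versus $Sq^1 Sq^{14} g_0 = Sq^{15} g_0 \neq 0$, plus the vanishing of the relevant coset), is exactly the paper's (Davis') argument. The gap is at the one step carrying real content: showing $Sq^1 g_7 = Sq^8 g_0$ rather than $0$. The mechanism you propose cannot decide this. That $h^*(\Sigma g_7)$ is the degree-$8$ generator $Sq^4 \cdot \Sigma^4 1$ of $K$ is automatic from $q(g_7) = \Sigma^7 1$ and holds for both candidate extensions, and a Poincar\'e-series comparison is vacuous here: the split and nonsplit extensions have identical Poincar\'e series, since any extension of graded $\bF_2$-vector spaces splits additively. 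Your closing paragraph correctly identifies this as the principal obstacle, but neither remedy you name amounts to an argument: ``traversing the connecting homomorphism using $\psi^*(\Sigma^4 1) = Sq^4$'' is never carried out, and nothing you have established distinguishes the two extensions using only $\psi^*$ and the maps of the long exact sequence --- both candidates are compatible with all of that data; while ``computing the extension class in $\Ext_A^1(\Sigma^{-1} K, C)$ and observing it is nonzero'' is a restatement of the claim, not a method for proving it.

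What is missing is an input beyond the cohomological data. The paper settles the point with homotopy groups: if $(E_j)$ were split, then $E_2(j) \cong \Ext_A(C, \bF_2) \oplus \Ext_A(\Sigma^{-1} K, \bF_2)$, and the resulting $h_0$-tower in stem $7$ would force $\pi_7(j)$ to have order divisible by $32$, contradicting $\pi_7(j) \cong \bZ/16$ from Proposition~\ref{prop:pij} (which rests on the known action of $\psi^3$ on $\pi_*(ko)$). Some argument of this kind --- a comparison with the known abutment, or an equivalent secondary-operation computation --- must be supplied before your proof closes.
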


\begin{proof}
We reproduce Davis' argument: The extension of~$\Sigma^{-1} K$ is
determined by the values of $Sq^1 g_7 \in \{0, Sq^8 g_0\}$, $Sq^7 g_7
\in \{0, Sq^{14} g_0\}$ and $(Sq^4 Sq^6 + Sq^6 Sq^4)g_7 \in \{0\}$.
Here $Sq^1 Sq^7 = 0$ and $Sq^1 Sq^{14} g_0 = Sq^{15} g_0 \ne 0$, so $Sq^7
g_7 = 0$.  This leaves at most two possible extensions: the split one with
$Sq^1 g_7 = 0$ and a nonsplit one with $Sq^1 g_7 = Sq^8 g_0$.  If $(E_j)$
were split, the resulting Adams $E_2$-term would imply that $\pi_7(j)$
had order divisible by~$32$.  This contradicts Proposition~\ref{prop:pij},
and therefore $Sq^1 g_7 = Sq^8 g_0$, as asserted.
\end{proof}

\begin{proof}[Proof of Theorem~\ref{thm:E3j}]
The extension~$(E_j)$ induces a long exact sequence
$$
\dots \to \Ext_A^{s-1,t}(C, \bF_2)
	\overset{\delta_X}\longto \Ext_A^{s,t}(\Sigma^{-1} K, \bF_2)
	\overset{q^*}\longto E_2^{s,t}(j)
	\overset{i^*}\longto \Ext_A^{s,t}(C, \bF_2)
\to \cdots
$$
of modules over~$\Ext_{A(3)}(\bF_2, \bF_2)$.  We claim that
$$
\im(\delta_X) = \bF_2[h_0, w_1^2] \{\Sigma^{-1} h_0^5 \ldYdZ{h_2^2}\}
              \cong \Sigma^3 \bF_2[h_0, w_1^2] \{h_0^4 v'\} \,.
$$
From $q^* \delta_X = 0$ and Theorem~\ref{thm:id2q} we see that
$\im(\delta_X) \subset \ker(\delta_Y \delta_Z)$, which is $h_0$-torsion
free by Lemma~\ref{lem:dYdZ}.  Hence $\delta_X$ vanishes on the
$h_0$-power torsion in $\Ext_A^{*-1,*}(C, \bF_2)$, and we only need
to determine its values on monomials in $w_1$, $\alpha$ and $w_2$,
since the remaining algebra generators in Proposition~\ref{prop:ExtA2}
are $h_0$-power torsion classes.  Furthermore, by $w_1^2$-linearity
it suffices to determine $\delta_X(w_1^a \alpha^b w_2^c)$ for $a \in
\{0,1\}$, $b\ge0$ and $c\ge0$.

The bidegree containing $\delta_X(w_1)$ equals
$\bF_2\{\Sigma^{-1} h_0^5 \ldYdZ{h_2^2}\} = \bF_2\{i_Z^*( \Sigma^3 h_0^4 v')\}$.
If $\delta_X(w_1)$ were zero, then $q^*(\Sigma^{-1} h_0^{i+1} \ldYdZ{h_2^2})$
for $0 \le i \le 4$ would survive to $E_\infty(j)$ as nonzero classes,
so that $\pi_7(j)$ would have order divisible by~$32$.  This contradicts
Proposition~\ref{prop:pij}, and proves that $\delta_X(w_1) = \Sigma^{-1}
h_0^5 \ldYdZ{h_2^2}$.
(Alternatively, we can calculate enough of
$\Ext_A(H^*(j), \bF_2)$, to see that the $h_0$-tower starting in
$\Ext_A^{0,7}(H^*(j), \bF_2)$ has height exactly~$5$.)
On the other hand, $\delta_X$ maps each of the remaining
monomials $w_1^a \alpha^b w_2^c$ with $a \in \{0, 1\}$ to bidegrees
where~$\ker(\delta_Y \delta_Z)$ is trivial.  Hence the image of $\delta_X$
is as claimed.

This shows that the submodule
$$
\im(i^*) \into \Ext_A^{*,*}(C, \bF_2)
$$
is given by removing $\bF_2[h_0, w_1^2]\{w_1\}$.
It also shows that the quotient module
$$
\Ext_A^{*,*}(\Sigma^{-1} K, \bF_2) \onto \coim(q^*)
$$
is given by truncating the $h_0$-tower containing $\Sigma^{-1}  h_0
\ldYdZ{h_2^2} = i_Z^*(\Sigma^3 v')$ by setting $h_0^5 \cdot \Sigma^{-1}
\ldYdZ{h_2^2}$ equal to zero, repeated $w_1^2$-periodically.
We get a short exact sequence
$$
0 \to \coim(q^*) \longto E_2(j) \longto \im(i^*) \to 0
$$
where the projection $\coim(q^*) \onto \coim(\delta_Y \delta_Z)$ has
kernel equal to the quotient
$$
\Sigma^{-1} \bigl( \bF_2[w_1^2]\{h_0\ldYdZ{h_2^2},\, h_0^2\ldYdZ{h_2^2},\,
h_0^3\ldYdZ{h_2^2},\, h_0^4\ldYdZ{h_2^2}\}
\oplus \bF_2[h_0, w_1^2]\{w_1\cdot h_0\ldYdZ{h_2^2}\} \bigr)
$$
of $\ker(\delta_Y \delta_Z)$, and the inclusion $\im(\delta_Y \delta_Z)
\into \im(i^*)$ has cokernel
$$
\bF_2[h_0, w_1^2] \oplus
\bF_2[w_1] \{h_1, h_1^2, h_2, h_0 h_2, h_0^2 h_2, c_0, h_1 c_0\}
$$
contained in $\cok(\delta_Y \delta_Z)$.  
Additively the structure of $E_2(j)$ is the same as that given
in \cite{Dav75}*{Thm.~1(ii)}.  From the relation $Sq^8 g_0 + Sq^1 g_7$
in Proposition~\ref{prop:cohj}, it follows that $h_0\cdot q^*(\Sigma^{-1}\ldYdZ{h_2^2})
= h_3 \in E_2^{1,8}(j)$, and we adopt this notation now.

By Theorem~\ref{thm:id2q}, we know that $d_2^{s,t} \: E_2^{s,t}(j) \to
E_2^{s+2,t+1}(j)$ has rank no less than the dimension of $\coim(\delta_Y
\delta_Z)$ in bidegree~$(s,t)$, and $d_2^{s-2,t-1} \: E_2^{s-2,t-1}(j)
\to E_2^{s,t}(j)$ has rank no less than the dimension of $ \im(\delta_Y
\delta_Z)$ in the same bidegree.  Hence the dimension of $E_3^{*,*}(j)$
in each bidegree is bounded above by the corresponding dimension for
\begin{align*}
\bF_2[h_0, w_1^2]\{1, h_3 w_1\}
&\oplus \bF_2[w_1]\{h_1, h_1^2, h_2, h_0 h_2, h_0^2 h_2, c_0, h_1 c_0\} \\
&\oplus \bF_2[w_1^2]\{h_3, h_0 h_3, h_0^2 h_3, h_0^3 h_3\} \,.
\end{align*}
See Figure~\ref{upper} for a picture of this upper bound.

Since we know that $\pi_{8k-1}(j) \cong
\bZ/16 k$ for $k\ge1$ (modulo odd torsion) there must also be
$d_2$-differentials
$$
d_2(w_1^k) = h_0^5 h_3 w_1^{k-1}
$$
for positive $k \equiv 2 \mod 4$, while $d_2(w_1^k) = 0$ for $k \equiv
0 \mod 4$.  Hence the upper bound for the dimension of $E_3(j)$ is
not exactly attained.  In particular, the rank of $d_2$ is one larger
than the rank of $\delta_Y \delta_Z$ in some bidegrees
for $t-s \equiv 16 \mod 32$.

The remaining differential pattern also follows from the known order of
these homotopy groups, since the length of the differential on
$w_1^k$ determines the order of $\pi_{8k-1}(j)$, and vice versa.
\end{proof}

\begin{remark}
In view of the extensions $h_0 \cdot q^*(\Sigma^{-1} \ldYdZ{h_2^2}) = h_3$
and $h_0^2 \cdot d_0 = h_2^2 w_1$, it follows that
$E_2(j)$ contains an $h_0$-tower of height~$5$ generated by 
$q^*(\Sigma^{-1} \ldYdZ{h_2^2})$ and $h_0$-towers of height~$7$ generated
by $q^*(\Sigma^{-1}\ldYdZ{d_0 w_1^a})$ for odd $a\ge1$, together with
infinite $h_0$-towers on $q^*(\Sigma^{-1}\ldYdZ{d_0 w_1^a})$ for even $a\ge0$.  
These extra classes near the bottom of the $h_0$-towers support
$d_2$-differentials, hence are no longer present at the $E_3$-term.
\end{remark}


\begin{remark}
The map $e \: E_2(S) \to E_2(j)$ of mod~$2$ Adams spectral sequences
detects some $d_2$-differentials in the domain, including $d_2(f_0) =
h_0^2 e_0$ and $d_2(h_0 i) = h_0^2 Pd_0$.  These differentials easily
propagate to determine those on~$e_0$, $i$, $j$, $k$, $\ell$, $m$ and~$y$,
in the usual notation~\cite{Tan70}, \cite{BR21}*{Ch.~11}.
\end{remark}

\section{The mod~$2$ image-of-$J$ spectrum}
\label{sec:jmod2}

An analysis similar to, but much easier than, the preceding leads
to the conclusion that the Adams spectral sequence for the mod~$2$
reduction~$j/2$ of the connective image-of-$J$ spectrum has $E_3$-term
as shown in Figure~\ref{fig:jmod2}.  It is then immediate that $E_3(j/2)
= E_\infty(j/2)$.  A summary of the argument follows.

The mod~$2$ Moore spectrum $S/2 = S \cup_2 e^1$ has cohomology~$H^*(S/2)
= E[Sq^1]$, where $E[-]$ denotes the exterior algebra.
Smashing~\eqref{eq:j} with this Moore spectrum, we obtain a homotopy
(co-)fiber sequence
\begin{equation} \label{eq:jmod2}
j/2 \longto ko/2 \overset{\psi/2}\longto \Sigma^4 ksp/2 \longto \Sigma j/2
\,,
\end{equation}
which we view as a case of the homotopy cofiber sequence~\eqref{eq:XYZ}.

The mod~$2$ cohomology $A$-modules and associated $\Ext$ groups for
the $ko$-module spectra~$ko/2$ and~$ksp/2$ are easily calculated from
Proposition~\ref{prop:cohko}.  As is well known, $\Ext_A(H^*(ko/2),
\bF_2)$ consists of a ``lightning flash'' repeated $w_1$-periodically, while
$\Ext_A^{s,t}(H^*(ksp/2), \bF_2) \cong \Ext_A^{s+3,t+7}(H^*(ko/2),\bF_2)$.
We make these explicit in Proposition~\ref{prop:ExtA2M1}.
Similarly, the kernel $K_2 = K \otimes E[Sq^1]$, image $I_2 = I \otimes
E[Sq^1]$ and cokernel $C_2 = C \otimes E[Sq^1]$ of $(\psi/2)^*$ can
readily be presented as $A$-modules.  However, we only need their
connectivities, which are immediate from Proposition~\ref{prop:cohko}.

As in the integral case, $\Ext_A(K_2, \bF_2)$, $\Ext_A(I_2, \bF_2)$
and $\Ext_A(C_2, \bF_2)$ are closely related, where the latter is
explicitly known.

\begin{proposition}[\cite{BR21}*{Prop.~4.2}] \label{prop:ExtA2M1}
As an $\bF_2[h_0, h_1, w_1]$-module
$$
\Ext_A(C_2, \bF_2) = \Ext_{A(2)}(E[Sq^1], \bF_2)
$$
contains 
\begin{enumerate}
\item
\label{case1}
a summand isomorphic to $\Ext_A(H^*(ko/2), \bF_2)$
that is free over $\bF_2[w_1]$ on
classes forming a ``lightning flash''
$$
\begin{tabular}{ >{$}c<{$}|>{$}c<{$} >{$}c<{$} >{$}c<{$} >{$}c<{$}
         >{$}c<{$} >{$}c<{$} >{$}c<{$} }
& i(1) & i(h_1) & i(h_1^2) &
    \widetilde{h_1} & h_1\widetilde{h_1} & h_1^2\widetilde{h_1} \\
\hline
s & 0 & 1 & 2 & 1 & 2 & 3 \\
t & 0 & 2 & 4 & 3 & 5 & 7 \\
t-s & 0 & 1 & 2 & 2 & 3 & 4
\end{tabular}
$$
and
\item
\label{case2}
a summand
isomorphic to $\Sigma^{1,0} \Ext_A(H^*(\Sigma^4 ksp/2), \bF_2)$
that is free over $\bF_2[w_1]$ on
classes forming a ``shifted lightning flash''
$$
\begin{tabular}{ >{$}c<{$}|>{$}c<{$} >{$}c<{$} >{$}c<{$} >{$}c<{$}
         >{$}c<{$} >{$}c<{$} >{$}c<{$} }
& i(h_2) & \widetilde{h_2^2} & h_1 \widetilde{h_2^2} & h_1^2 \widetilde{h_2^2} &
    \widetilde{c_0} & h_1\widetilde{c_0}  \\
\hline
s & 1 & 2 & 3 & 4 & 3 & 4 \\
t & 4 & 9 & 11 & 13 & 12 & 14 \\
t-s & 3 & 7 & 8 & 9 & 9 & 10
\end{tabular}
$$
\end{enumerate}
together with other direct summands.
\end{proposition}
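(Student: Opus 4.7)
The plan is to use change of rings to reduce to an $A(2)$-computation, then derive the structure from the long exact sequence associated to the standard short exact sequence for $E[Sq^1]$.

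First, since $C = A /\!/ A(2)$ is induced from $A(2)$, so is $C_2 = C \otimes E[Sq^1] \cong A \otimes_{A(2)} E[Sq^1]$, and change of rings gives
$$
\Ext_A^{*,*}(C_2, \bF_2) \cong \Ext_{A(2)}^{*,*}(E[Sq^1], \bF_2).
$$
Likewise $\Ext_A(H^*(ko/2), \bF_2) \cong \Ext_{A(1)}(E[Sq^1], \bF_2)$ is the familiar six-element lightning flash pattern repeated $w_1$-periodically, and $\Ext_A(H^*(\Sigma^4 ksp/2), \bF_2)$ is its shifted analogue.

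Next, I would apply $\Ext_{A(2)}(-, \bF_2)$ to the short exact sequence of $A(2)$-modules $0 \to \Sigma \bF_2 \to E[Sq^1] \to \bF_2 \to 0$, obtaining the long exact sequence
$$
\cdots \to \Ext^{s,t}_{A(2)}(\bF_2, \bF_2) \overset{i_*}\longto \Ext^{s,t}_{A(2)}(E[Sq^1], \bF_2) \overset{p_*}\longto \Ext^{s,t-1}_{A(2)}(\bF_2, \bF_2) \overset{h_0}\longto \Ext^{s+1,t}_{A(2)}(\bF_2, \bF_2) \to \cdots
$$
whose connecting homomorphism is multiplication by $h_0 \in \Ext^{1,1}_{A(2)}(\bF_2, \bF_2)$. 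Using the explicit description of $\Ext_{A(2)}(\bF_2, \bF_2)$ from Proposition~\ref{prop:ExtA2}, I would pick out the $h_0$-annihilated generators $h_1$, $h_2^2$, and $c_0$, and for each choose a lift $\widetilde{x} \in \Ext_{A(2)}(E[Sq^1], \bF_2)$ with $p_*(\widetilde{x}) = x$, together with the images $i(1), i(h_1), i(h_1^2), i(h_2)$.

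Finally, I would verify that the six classes $i(1), i(h_1), i(h_1^2), \widetilde{h_1}, h_1\widetilde{h_1}, h_1^2\widetilde{h_1}$ and their $w_1$-multiples form a free $\bF_2[w_1]$-submodule whose $\bF_2[h_0, h_1]$-structure reproduces the lightning flash of $\Ext_{A(1)}(E[Sq^1], \bF_2)$, and that $i(h_2), \widetilde{h_2^2}, h_1\widetilde{h_2^2}, h_1^2\widetilde{h_2^2}, \widetilde{c_0}, h_1\widetilde{c_0}$ generate the shifted summand, with the stated bigradings following directly from the shifts in the long exact sequence. The main obstacle is establishing these two subspaces as genuine direct summands of $\bF_2[h_0, h_1, w_1]$-modules, since the long exact sequence exhibits them only by assembling sub- and quotient pieces. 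I would handle this by computing a minimal $A(2)$-free resolution of $E[Sq^1]$ in low homological degrees, sufficient to see the generators $i(x)$ and $\widetilde{x}$ explicitly, after which direct-sum decomposition follows from $\bF_2[h_0, h_1, w_1]$-linearity and the independence of the chosen generators within a single $w_1$-period.
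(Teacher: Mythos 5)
The paper itself does not prove this statement: it is quoted directly from \cite{BR21}*{Prop.~4.2}, and the notation $i(x)$, $\widetilde{x}$ is that of \cite{BR21}*{\S4.1}, where the computation is organized exactly as in your first steps --- change of rings to $A(2)$, then the long exact sequence induced by $0 \to \Sigma\bF_2 \to E[Sq^1] \to \bF_2 \to 0$, whose connecting homomorphism is multiplication by $h_0$.  Up to that point your sketch is sound: $h_1$, $h_2^2$ and $c_0$ are indeed killed by $h_0$ in $\Ext_{A(2)}(\bF_2,\bF_2)$, so the lifts $\widetilde{h_1}$, $\widetilde{h_2^2}$, $\widetilde{c_0}$ exist, and the bidegrees in the two tables follow from the shift in the long exact sequence.

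The genuine gap is in your final step.  The proposition asserts a direct-sum decomposition of $\bF_2[h_0,h_1,w_1]$-modules in which each of the two displayed pieces is free over $\bF_2[w_1]$ in \emph{all} degrees, and this cannot be obtained from a minimal resolution ``in low homological degrees'' together with ``independence within a single $w_1$-period'': $\Ext_{A(2)}(E[Sq^1],\bF_2)$ is not $w_1$-periodic (the unnamed ``other direct summands'' involve $w_2$-periodic families and $w_1$-torsion), so one period determines nothing about higher $w_1$-powers.  The long exact sequence only exhibits $\Ext_{A(2)}(E[Sq^1],\bF_2)$ as an extension of a shift of $\ker(h_0\cdot)$ by $\cok(h_0\cdot)$; neither is obviously $w_1$-free, and the $\bF_2[h_0,h_1,w_1]$-module structure of the middle term is not determined by the sequence, so you must rule out hidden $h_0$-, $h_1$- and $w_1$-extensions from the lifted classes into the rest of the module in arbitrarily high filtration, and verify for instance that $w_1^k h_1^2\widetilde{h_1}$ and $w_1^k h_1\widetilde{c_0}$ are nonzero for every $k$.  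Closing this requires global input: either the complete determination of $\Ext_{A(2)}(E[Sq^1],\bF_2)$ as a direct sum of explicit cyclic $\bF_2[w_1,w_2]$-modules, as in \cite{BR21}*{Ch.~4} (a finite computation propagated to all degrees by $w_1,w_2$-linearity), or a structural splitting argument --- for the first summand one can, for example, show that the $\bF_2[h_0,h_1,w_1]$-submodule generated by your six classes maps isomorphically onto $\Ext_{A(1)}(E[Sq^1],\bF_2)$ under restriction, which does split it off, but no analogous map is available verbatim for the second summand.  As written, your argument establishes the twelve generators and their bidegrees, but not the summand and $\bF_2[w_1]$-freeness claims that the statement (and its use in Lemma~\ref{lem:dYdZmod2}) actually requires.
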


\begin{remark}
In the shifted lightning flash, we have $w_1 \cdot i(h_2) = h_1^2 \widetilde{c_0}$.
\end{remark}

\begin{remark}
The notation in Proposition~\ref{prop:ExtA2M1} is that
of~\cite{BR21}*{\S4.1}:  $i(x)$ denotes the image of a class $x$ from the
bottom cell of $S \cup_2 e^1$, while $\widetilde{x}$ projects to a class
$x$ on the top cell.
\end{remark}

\begin{lemma} \label{lem:dYdZmod2}
The composite 
$$
\delta_Y \delta_Z \: \Ext_A^{*,*}(K_2, \bF_2)
	\longto \Ext_A^{*+2,*}(C_2, \bF_2)
$$
is a monomorphism, with cokernel
$$
  \bF_2[w_1]\{ i(1), i(h_1), i(h_1^2),
    \widetilde{h_1}, h_1\widetilde{h_1}, h_1^2\widetilde{h_1},
   i(h_2), \widetilde{h_2^2}, h_1 \widetilde{h_2^2}, h_1^2 \widetilde{h_2^2},
    \widetilde{c_0}, h_1\widetilde{c_0}\} \,.
$$
\end{lemma}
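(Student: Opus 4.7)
The plan is to imitate the proof of Lemma~\ref{lem:dYdZ}, this time via the kernel--cokernel six-term exact sequence for the composition $\delta_Y \delta_Z$ arising from $(E_{ko/2})$ and $(E_{\Sigma^4 ksp/2})$:
\begin{equation*}
0 \to \ker(\delta_Z) \longto \ker(\delta_Y\delta_Z) \overset{\delta_Z}\longto \ker(\delta_Y) \overset{\partial}\longto \cok(\delta_Z) \overset{\delta_Y}\longto \cok(\delta_Y\delta_Z) \longto \cok(\delta_Y) \to 0.
\end{equation*}

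The first step is to show that $\ker(\delta_Y) = 0$, equivalently that $q_Y^* \: \Ext_A(C_2, \bF_2) \to \Ext_A(H^*(ko/2), \bF_2)$ is surjective.  Since $(E_{ko/2})$ is obtained by smashing the integral extension $(E_{ko})$ with~$S/2$, naturality of the top-/bottom-cell construction under the quotient $q$ ensures $q_Y^*(i(x)) = i(q^*(x))$ and $q_Y^*(\widetilde{x}) = \widetilde{q^*(x)}$, where $q^* \: \Ext_A(C, \bF_2) \to E_2(ko)$ is the integral comparison map of Lemma~\ref{lem:C-Hko-I}.  Since $q^*$ already hits $1, h_1, h_1^2$, the case~(1) lightning-flash summand of Proposition~\ref{prop:ExtA2M1} maps isomorphically under $q_Y^*$ onto its counterpart in $\Ext_A(H^*(ko/2), \bF_2)$, so $q_Y^*$ is surjective.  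An analogous argument, using that the integral $q_Z^*$ of Lemma~\ref{lem:I-Hksp-K} hits $\Sigma^4 h_0^i$, $\Sigma^4 h_1 v'$ and $\Sigma^4 h_1^2 v'$, shows $q_Z^* \: \Ext_A(I_2, \bF_2) \to \Ext_A(H^*(\Sigma^4 ksp/2), \bF_2)$ is surjective, hence $\ker(\delta_Z) = 0$.

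Given $\ker(\delta_Y) = \ker(\delta_Z) = 0$, the six-term sequence immediately yields $\ker(\delta_Y \delta_Z) = 0$, proving injectivity, and collapses to a short exact sequence
\begin{equation*}
0 \to \cok(\delta_Z) \overset{\delta_Y}\longto \cok(\delta_Y\delta_Z) \longto \cok(\delta_Y) \to 0.
\end{equation*}
By the surjectivity established above, $\cok(\delta_Y) \cong \Ext_A(H^*(ko/2), \bF_2)$ and $\cok(\delta_Z) \cong \Ext_A(H^*(\Sigma^4 ksp/2), \bF_2)$, realized inside $\Ext_A(C_2, \bF_2)$ by the case~(1) and case~(2) summands of Proposition~\ref{prop:ExtA2M1} respectively.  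Together the twelve free $\bF_2[w_1]$-generators listed in these two (shifted) lightning flashes exhaust $\cok(\delta_Y\delta_Z)$, giving the stated description.

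The main obstacle is verifying surjectivity of $q_Y^*$ and $q_Z^*$.  Proposition~\ref{prop:ExtA2M1} identifies abstract summands of $\Ext_A(C_2, \bF_2)$ isomorphic to the two lightning flashes, but one must confirm that these summands are precisely sections of the relevant comparison maps rather than merely coincidentally isomorphic subspaces; this requires tracking the naturality of the $i(\cdot)$ and $\widetilde{(\cdot)}$ cell-filtration constructions against the integral analyses of Lemmas~\ref{lem:C-Hko-I} and~\ref{lem:I-Hksp-K}, after which $w_1$-linearity propagates the identifications.
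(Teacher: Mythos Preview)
Your overall strategy matches the paper's: show that $\delta_Y$ and $\delta_Z$ are individually monomorphisms by proving that $q_Y^*$ and $q_Z^*$ are surjective, then read off injectivity of the composite and the cokernel from the kernel--cokernel sequence.  The difference lies in how surjectivity is verified.  The paper argues directly from connectivity: since $I_2$ is $3$-connected, the map $q_Y^* \: \Ext_A^{s,t}(C_2,\bF_2) \to E_2^{s,t}(ko/2)$ is an isomorphism for $t-s \le 2$, which already covers $i(1)$, $i(h_1)$, $i(h_1^2)$ and $\widetilde{h_1}$; then $h_1$- and $w_1$-linearity give the rest.  The analogous argument using that $K_2$ is $7$-connected handles $q_Z^*$.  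This is shorter and self-contained---it neither invokes the integral Lemmas~\ref{lem:C-Hko-I}, \ref{lem:I-Hksp-K} nor requires checking compatibility of the $i(\cdot)$ and $\widetilde{(\cdot)}$ constructions with the comparison maps, which is precisely the ``main obstacle'' you flag at the end.

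Your route through naturality with the integral case is correct, but note that the identity $q_Y^*(\widetilde{x}) = \widetilde{q^*(x)}$ you write down is only well-defined modulo the image of $i$, since $\widetilde{x}$ denotes a choice of lift.  What naturality of the top-cell projection actually gives is that $q_Y^*(\widetilde{x})$ projects to $q^*(x)$ on the top cell; one then needs a separate check (by inspection of bidegrees in $E_2(ko/2)$, or by the connectivity argument above) that this forces $q_Y^*(\widetilde{x})$ to be nonzero.  Once surjectivity is established either way, the identification of $\cok(\delta_Y\delta_Z)$ with the twelve listed generators follows as you indicate.
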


\begin{proof}
By Proposition~\ref{prop:KICStructure} the module $I_2$ is
$3$-connected and the module $K_2$ is $7$-connected.  Therefore, $q_Y^*
: \Ext_A^{s,t}(C_2, \bF_2) \to E_2^{s,t}(ko/2)$ is an isomorphism for
$t-s \leq 2$, so that $q_Y^*(i(1))$ is nonzero.  Then, by $h_0$-, $h_1$-
and $w_1$-linearity, $q_Y^*$ is an epimorphism, and hence $\delta_Y$
is a monomorphism with cokernel the module in case~(\ref{case1}) of
Proposition~\ref{prop:ExtA2M1}.  Similarly, $q_Z^* : \Ext_A^{s,t}(I_2,
\bF_2) \to E_2^{s,t}(\Sigma^4 ksp/2)$ is an isomorphism for $t-s
\leq 6$.  Again by $h_0$-, $h_1$- and $w_1$-linearity, $q_Z^*$ is
an epimorphism, and hence $\delta_Z$ is a monomorphism with cokernel
mapping isomorphically by~$\delta_Y$ to the module in case~(\ref{case2})
of Proposition~\ref{prop:ExtA2M1}.
\end{proof}

\begin{theorem} \label{thm:E3jmod2}
Let $j/2 = j \wedge S/2$ be the mod~$2$ connective image-of-$J$ spectrum.
There is an isomorphism
\begin{align*}
  E_3(j/2) &= E_\infty(j/2) \\
	&\cong \bF_2[w_1]\{ i(1), i(h_1), i(h_1^2),
    \widetilde{h_1}, h_1\widetilde{h_1}, h_1^2\widetilde{h_1},
   i(h_2), \widetilde{h_2^2}, h_1 \widetilde{h_2^2}, h_1^2 \widetilde{h_2^2},
    \widetilde{c_0}, h_1\widetilde{c_0}\} \,.
\end{align*}
The bidegrees of these generators are as in Proposition~\ref{prop:ExtA2M1}.
\end{theorem}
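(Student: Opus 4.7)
My plan is to follow the template of the proof of Theorem~\ref{thm:E3j}, applying Theorem~\ref{thm:id2q} to the cofiber sequence~\eqref{eq:jmod2} with $X = j/2$, $Y = ko/2$, and $Z = \Sigma^4 ksp/2$, and to exploit the fact that Lemma~\ref{lem:dYdZmod2} asserts $\delta_Y \delta_Z$ is a monomorphism whose cokernel is precisely the module claimed for $E_3(j/2)$.  The extension $(E_{j/2})$ induces a long exact sequence
$$
\cdots \to \Ext_A^{s-1,t}(C_2, \bF_2)
        \overset{\delta_X}\longto \Ext_A^{s,t}(\Sigma^{-1} K_2, \bF_2)
        \overset{q^*}\longto E_2^{s,t}(j/2)
        \overset{i^*}\longto \Ext_A^{s,t}(C_2, \bF_2) \to \cdots \,,
$$
and Theorem~\ref{thm:id2q} identifies $\pm i^* d_2 q^*$ with $\delta_Y \delta_Z$.

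The first key observation is that, since $\delta_Y \delta_Z$ is a monomorphism, the composite $i^* d_2 q^*$ is injective; hence $q^*$ is injective, so $\im(\delta_X) = \ker(q^*) = 0$ and $\delta_X = 0$.  Consequently the long exact sequence splits into short exact sequences exhibiting $E_2(j/2)$ as an extension of $\Ext_A(C_2, \bF_2)$ by $\Ext_A(\Sigma^{-1} K_2, \bF_2)$.  This is a crucial simplification compared with Theorem~\ref{thm:E3j}, whose proof required first computing a non-trivial $\delta_X$ and then identifying additional hidden $d_2$-differentials from $\pi_*(j)$.

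Next, the identity $i^* d_2 q^* = \pm \delta_Y \delta_Z$ shows that $d_2$ restricted to the submodule $q^*(\Ext_A(\Sigma^{-1} K_2, \bF_2)) \subset E_2(j/2)$ is injective, with image projecting via $i^*$ isomorphically onto $\im(\delta_Y \delta_Z) \subset \Ext_A(C_2, \bF_2)$.  This forces the bigraded rank of $d_2$ to be at least that of $\delta_Y \delta_Z$, so the bigraded $\bF_2$-dimension of $E_3(j/2)$ is bounded above in each bidegree by that of $\cok(\delta_Y \delta_Z)$---which by Lemma~\ref{lem:dYdZmod2} is exactly the $\bF_2[w_1]$-module in the statement.

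To finish, I would verify that this upper bound is attained and that $E_3(j/2) = E_\infty(j/2)$ by comparing total $\bF_2$-dimensions stem-by-stem with $\pi_*(j/2)$.  Applying $\pi_*$ to the cofiber sequence $j \overset{2}\longto j \longto j/2 \longto \Sigma j$ yields short exact sequences
$$
0 \to \pi_n(j)/2 \longto \pi_n(j/2) \longto \pi_{n-1}(j)[2] \to 0 \,,
$$
and Proposition~\ref{prop:pij} then determines $\dim_{\bF_2} \pi_n(j/2)$ in each residue class modulo~$8$.  These dimensions agree with those of the displayed $\bF_2[w_1]$-module, so $E_3(j/2)$ cannot be smaller than the claimed upper bound and no higher differentials can occur.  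The main obstacle is this numerical check: without it, one would need to hunt for extra $d_2$-differentials as in the proof of Theorem~\ref{thm:E3j}, but here the matching of dimensions rules them out automatically.
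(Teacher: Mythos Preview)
Your proposal is correct, and the first three paragraphs match the paper's proof essentially verbatim: use Theorem~\ref{thm:id2q} to identify $i^* d_2 q^*$ with $\delta_Y\delta_Z$, deduce from injectivity of $\delta_Y\delta_Z$ that $q^*$ is injective and hence that $E_2(j/2)$ sits in a short exact sequence between $\Ext_A(\Sigma^{-1}K_2,\bF_2)$ and $\Ext_A(C_2,\bF_2)$, and conclude that $\cok(\delta_Y\delta_Z)$ is an upper bound for $E_3(j/2)$.

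The only difference is in the final step. The paper finishes by observing that the upper bound itself is so sparse that, for bidegree reasons, there is no room for any additional $d_2$-differential (so $E_3(j/2)\cong\cok(\delta_Y\delta_Z)$) and no room for any $d_r$ with $r\ge3$ (so $E_3=E_\infty$). You instead compute $\dim_{\bF_2}\pi_n(j/2)$ from Proposition~\ref{prop:pij} via the cofiber sequence $j\overset{2}{\to}j\to j/2$ and match these stem-by-stem against the upper bound, forcing all inequalities $\dim E_\infty\le\dim E_3\le\dim\cok(\delta_Y\delta_Z)$ to be equalities. Both arguments are valid and short; the paper's avoids any appeal to $\pi_*(j)$, while yours avoids the need to inspect the bidegree pattern of the twelve $\bF_2[w_1]$-generators.
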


\begin{proof}
By Theorem~\ref{thm:id2q}, $i^*d_2q^* = \delta_Y\delta_Z$ and by 
Lemma~\ref{lem:dYdZmod2} this is a monomorphism.  Therefore, $q^*$ is 
a monomorphism and the sequence
$$
0 \to \Ext_A^{s,t}(\Sigma^{-1} K_2, \bF_2)
	\overset{q^*}\longto E_2^{s,t}(j/2)
	\overset{i^*}\longto \Ext_A^{s,t}(C_2, \bF_2)
\to 0
$$
is short exact.  It follows from Theorem~\ref{thm:id2q} that
$\cok(\delta_Y\delta_Z)$ is an upper bound for $E_3(j/2)$. It then
follows that $E_3(j/2) \cong \cok(\delta_Y\delta_Z)$ for bidegree
reasons.  Similarly, there is no room for further differentials.
\end{proof}

\begin{remark}
The cyclic group $\pi_{8k-1}(j)$ must map onto $\pi_{8k-1}(j/2)
\cong \bZ/2$ with a filtration shift of~$1$.  The nontrivial
$\eta$-multiplication on this latter group then implies the hidden
$\eta$-extension from $h_3 w_1^{k-1}$ to $c_0 w_1^{k-1}$ for~$j$, as
shown in Figure~\ref{infty}.
\end{remark}

\section{The odd-primary image-of-$J$ spectrum}
\label{sec:jpodd}

Let $p$ be an odd prime, and let all spectra and homotopy groups be
implicitly $p$-completed.  The $p$-primary connective image-of-$J$
spectrum~$j$ sits in a homotopy (co-)fiber sequence
\begin{equation} \label{eq:jp}
j \longto \ell \overset{\psi}\longto \Sigma^q \ell
	\longto \Sigma j \,,
\end{equation}
where $q = 2p-2$, $r$ generates $\bZ_p^\times$ topologically, $\ell$ is
the Adams summand of connective complex $K$-theory, and $\psi$ denotes
a lift of $\psi^r - 1 \: \ell \to \ell$.  Let $E(1) = E[\beta, Q_1]$ and
$A(1) = \< \beta, P^1 \>$ be the sub Hopf algebras of the mod~$p$ Steenrod
algebra~$A$ generated by the listed elements, where $Q_1 = P^1 \beta -
\beta P^1$.  Let $a \doteq b$ mean that $a$ is a $p$-adic unit times~$b$.

\begin{proposition}[\cite{Sin68}]
\leavevmode
\begin{enumerate}
\item
$H^*(\ell) = A/A(\beta, Q_1) = A /\!/ E(1)$ and
$E_2(\ell) = \Ext_{E(1)}(\bF_p, \bF_p) = \bF_p[v_0, v_1]$
with algebra generators in $(s,t)$-bidegrees~$|v_0| = (1,1)$
and~$|v_1| = (1,q+1)$.
\item
$\pi_*(\ell) = \bZ[v_1]$ with $|v_1| = q$.
\end{enumerate}
\end{proposition}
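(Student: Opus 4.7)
Plan. Both statements follow from the Adams splitting of the $p$-completed connective complex $K$-theory spectrum~$bu$, combined with a Koszul-type computation of Ext over an exterior Hopf algebra.

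For part~(1), I would first recall the classical identification $H^*(bu; \bF_p) \cong A /\!/ E(1)$ at an odd prime (dual to the familiar description of $H_*(bu; \bF_p)$ as an $A_*$-comodule), and the Adams splitting
$$
bu \simeq \bigvee_{i=0}^{p-2} \Sigma^{2i} \ell .
$$
Taking cohomology, both sides decompose accordingly, and by Poincar\'e series comparison the $i = 0$ summand gives $H^*(\ell) = A /\!/ E(1)$, proving the first assertion. For the $E_2$-term I would apply the standard change-of-rings isomorphism for the sub Hopf algebra $E(1) \subset A$,
$$
\Ext_A^{s,t}(A /\!/ E(1), \bF_p) \cong \Ext_{E(1)}^{s,t}(\bF_p, \bF_p),
$$
and then use the fact that $E(1) = E[\beta] \otimes E[Q_1]$ is a tensor product of exterior Hopf algebras on primitive generators. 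The K\"unneth formula then gives
$$
\Ext_{E(1)}(\bF_p, \bF_p) \cong \Ext_{E[\beta]}(\bF_p, \bF_p) \otimes \Ext_{E[Q_1]}(\bF_p, \bF_p) \cong \bF_p[v_0] \otimes \bF_p[v_1],
$$
with $v_0$ and $v_1$ the classes Koszul-dual to $\beta$ and $Q_1$, respectively. Since $|\beta| = 1$ and $|Q_1| = 2p - 1 = q + 1$, we obtain the bidegrees $|v_0| = (1,1)$ and $|v_1| = (1, q+1)$.

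For part~(2), I would argue by a sparseness collapse. The $E_2$-term $\bF_p[v_0, v_1]$ is concentrated in even topological degrees, since $|v_0|$ contributes $t - s = 0$ and $|v_1|$ contributes $t - s = q = 2p - 2$, both even, and an $\bF_p$-algebra generated by such classes sits in even $(t-s)$-degrees throughout. An Adams differential $d_r$ shifts $t - s$ by $-1$, hence lands in an odd $(t-s)$-degree where $E_r = 0$. So all $d_r$ vanish for $r \geq 2$, and $E_\infty = \bF_p[v_0, v_1]$. Each $v_0$-tower above $v_1^k$ assembles into a free $\bZ_p$-summand in $\pi_{qk}(\ell)$, with no additive or multiplicative extensions to resolve (every $v_0^i v_1^k$ generates its own $\bZ_p$-summand). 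This yields $\pi_*(\ell) = \bZ[v_1]$ under the implicit $p$-completion.

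There is no real obstacle: this is a classical result, cited to~\cite{Sin68}, and the argument is essentially bookkeeping once one has in hand the cohomology of $bu$ and its Adams splitting. The only points requiring care are the correct normalization of the Adams idempotent and the verification that $E(1)$ is indeed a sub Hopf algebra of~$A$ so that change of rings applies; both are standard.
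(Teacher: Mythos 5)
The paper itself gives no proof of this proposition; it is quoted from Singer's paper, so the only question is whether your proposed argument is sound. Most of it is: the change-of-rings isomorphism along the sub Hopf algebra $E(1) \subset A$, the K{\"u}nneth computation $\Ext_{E(1)}(\bF_p,\bF_p) \cong \bF_p[v_0,v_1]$ with $|v_0|=(1,1)$, $|v_1|=(1,q+1)$, and the evenness collapse in part~(2) are all standard and correct. (One slip in part~(2): the parenthetical ``every $v_0^i v_1^k$ generates its own $\bZ_p$-summand'' is wrong and contradicts your preceding clause. The whole tower $\{v_0^i v_1^k\}_{i\ge0}$ in stem $qk$ assembles into a \emph{single} copy of $\bZ_p$, with $v_0^i v_1^k$ detecting $p^i$ times a generator; the $v_0$-multiplications are precisely the additive extensions being resolved, not evidence that there are none.)

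The genuine gap is in the first assertion of part~(1). Your starting point, ``$H^*(bu;\bF_p)\cong A/\!/E(1)$ at an odd prime,'' is false for $p$ odd: already $(A/\!/E(1))^2=0$ while $H^2(ku;\bF_p)\ne0$, and the correct statement is $H^*(ku;\bF_p)\cong\bigoplus_{i=0}^{p-2}\Sigma^{2i}\,A/\!/E(1)$, one shifted copy for each wedge summand of the Adams splitting. As written your argument is internally inconsistent: if the whole were a single copy of $A/\!/E(1)$ and $bu$ split into $p-1$ suspended copies of $\ell$, a Poincar\'e series comparison could not return $H^*(\ell)=A/\!/E(1)$ for the bottom summand. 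And if one replaces the false premise by the correct one, the argument becomes essentially circular, since the decomposition of $H^*(ku;\bF_p)$ into shifted copies of $A/\!/E(1)$ is normally deduced from $H^*(\ell)=A/\!/E(1)$ together with the splitting, i.e.\ from the statement being proved. To make this route honest you need an independent identification of $H^*(\ell)$ (or of $H^*(ku)$) as an $A$-module, for instance by the long exact sequence of the cofiber sequence $\Sigma^q \ell \overset{v_1}\longto \ell \longto H\bZ$ (or $\Sigma^2 ku \overset{u}\longto ku \longto H\bZ$) using $H^*(H\bZ;\bF_p)=A/A\beta$; even then, matching the wedge summands with the cohomology summands requires a connectivity/indecomposability argument rather than Poincar\'e series alone. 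This identification is exactly the nontrivial content credited to \cite{Sin68}, so it cannot simply be ``recalled'' in the form you state.
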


\begin{proposition}[\cite{Ada62}]
\leavevmode
\begin{enumerate}
\item
$\psi^r(v_1) = r^{p-1} v_1$.
\item
$\pi_0(j) = \bZ$ and, when $n\ge1$,
$\pi_n(j) = \begin{cases}
\bZ/pk & \text{for $n = kq-1$,} \\
0 & \text{otherwise.}
\end{cases}$
\end{enumerate}
\end{proposition}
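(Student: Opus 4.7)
The plan is to deduce part~(1) from the action of the Adams operation~$\psi^r$ on complex $K$-theory, and then to read off part~(2) from the long exact sequence in homotopy associated to the cofiber sequence~\eqref{eq:jp}.

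For part~(1), I would use that $v_1 \in \pi_q(\ell)$ agrees, up to a $p$-adic unit, with the $(p-1)$-st power of the Bott element~$u$ in the Adams summand of $p$-completed connective complex $K$-theory.  Adams' classical formula $\psi^r(u) = r u$ then immediately gives $\psi^r(v_1) = r^{p-1} v_1$, and multiplicativity of~$\psi^r$ as a ring endomorphism yields $\psi^r(v_1^k) = r^{(p-1)k} v_1^k$ for each $k \ge 0$, which I will need below.

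For part~(2), I would first identify the map $\Sigma^q \ell \to \ell$ (over which $\psi^r - 1$ lifts as~$\psi$) with the $(q-1)$-connected cover map of~$\ell$; on homotopy this is $v_1$-multiplication, carrying $v_1^{k-1} \in \pi_{(k-1)q}(\ell) = \pi_{kq}(\Sigma^q \ell)$ to $v_1^k \in \pi_{kq}(\ell)$.  The long exact sequence
$$
\pi_{n+1-q}(\ell) \longto \pi_n(j) \longto \pi_n(\ell) \overset{\psi_*}\longto \pi_{n-q}(\ell)
$$
then reduces the computation to determining $\psi_* \: \pi_{kq}(\ell) \to \pi_{(k-1)q}(\ell)$.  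Since the composite of~$\psi$ with the cover map recovers $\psi^r - 1$, and $v_1$-multiplication is injective in the relevant degrees, we obtain $\psi_*(v_1^k) = (r^{(p-1)k} - 1)\, v_1^{k-1}$.  A bidegree-by-bidegree inspection, using that $\pi_*(\ell)$ is concentrated in degrees divisible by~$q$, now yields $\pi_0(j) = \bZ$; $\pi_{kq}(j) = 0$ for $k \ge 1$ because $\psi_*$ is injective there; $\pi_{kq-1}(j) = \bZ/(r^{(p-1)k} - 1)$; and $\pi_n(j) = 0$ in all remaining degrees.

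The only remaining input is the $p$-adic valuation of $r^{(p-1)k} - 1$.  Since $r$ topologically generates~$\bZ_p^\times$, we can write $r^{p-1} = 1 + p u$ with $u \in \bZ_p^\times$, and the binomial expansion of $(1 + pu)^k - 1$ yields $\ord_p(r^{(p-1)k} - 1) = 1 + \ord_p(k)$.  Hence $\bZ/(r^{(p-1)k} - 1) \cong \bZ/p^{1 + \ord_p(k)}$, which agrees in the $p$-completed setting with the claimed~$\bZ/pk$.  The main point warranting care is the identification of $\Sigma^q \ell \to \ell$ with $v_1$-multiplication; once that is in hand, the rest is a routine computation with the long exact sequence and elementary $p$-adic arithmetic, and no deeper obstacle arises.
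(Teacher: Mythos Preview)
The paper does not supply its own proof of this proposition; it simply records the result with a citation to Adams~\cite{Ada62}. Your argument is correct and is precisely the standard derivation: identify $v_1$ with $u^{p-1}$ to get part~(1), then run the long exact sequence of~\eqref{eq:jp} using that $\Sigma^q\ell \to \ell$ is the $(q-1)$-connected cover (equivalently, $v_1$-multiplication), and finish with the elementary computation $\ord_p(r^{(p-1)k}-1) = 1 + \ord_p(k)$. There is nothing further to compare.
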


\begin{proposition}[\cite{Rog03}*{Prop.~5.1(b)}]
\label{prop:KICp3}
\leavevmode
\begin{enumerate}
\item
$\psi^* \: H^*(\Sigma^q \ell) \to H^*(\ell)$ is determined by
$\psi^*(\Sigma^q 1) \doteq P^1$.
\item
$K = \ker(\psi^*) = \Sigma^{pq} A/\!/A(1)$.
\item
$I = \im(\psi^*) = \Sigma^q A/A(\beta, Q_1, (P^1)^{p-1})$.
\item
$C = \cok(\psi^*) = A/\!/A(1)$.
\end{enumerate}
In particular, $K \cong \Sigma^{pq} C$ as $A$-modules.
\end{proposition}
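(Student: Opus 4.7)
The plan is to establish part~(1) first, then derive parts~(2), (3), and~(4) as algebraic consequences. For~(1), I observe that $H^q(\ell) = (A/\!/E(1))^q$ is one-dimensional over~$\bF_p$, spanned by the class of~$P^1$, since every other admissible monomial of degree~$q$ involves $\beta$ or $Q_1$ on the right and hence vanishes modulo $A\cdot I(E(1)) = A\beta + AQ_1$. Thus $\psi^*(\Sigma^q 1) = c\cdot P^1$ for some $c\in\bF_p$. To see that $c$ is a unit, I would argue by naturality on the Adams $E_2$-term: a change-of-rings computation shows that $\psi^*$ induces the homomorphism $E_2(\ell)\to E_2(\Sigma^q \ell)$ sending $v_1 \in E_2^{1,q+1}(\ell)$ to $c\cdot \Sigma^q v_0$. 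Since $v_1$ detects a generator of~$\pi_q(\ell)$ and $\psi$ acts on~$\pi_q(\ell)$ as multiplication by $r^{p-1}-1$, which has $p$-adic valuation exactly~$1$ because $r$ topologically generates~$\bZ_p^\times$, the image $\psi(v_1)$ must be detected by a nonzero multiple of $\Sigma^q v_0$. Hence $c \ne 0$.

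Given~(1), part~(4) is immediate: $\psi^*$ is $A$-linear with image $A\cdot P^1 \subseteq A/\!/E(1)$, so $C = \cok(\psi^*) = A/(A\beta + AQ_1 + AP^1)$. The identity $Q_1 = P^1\beta - \beta P^1$ places $Q_1$ in $A\beta + AP^1$, whence $AQ_1 \subseteq A\beta + AP^1$ and $C = A/(A\beta + AP^1) = A/\!/A(1)$. For~(3), note that $I \cong \Sigma^q A/\mathrm{Ann}(P^1)$, where the annihilator is computed in $A/\!/E(1)$ under left multiplication. To verify that the left ideal $A(\beta, Q_1, (P^1)^{p-1})$ is contained in $\mathrm{Ann}(P^1)$: first, $\beta\cdot P^1 = P^1\beta - Q_1$ lies in $A\beta + AQ_1$ and so vanishes in $A/\!/E(1)$; second, $Q_1\cdot P^1$ sits in odd total degree $2q+1$, and dualizing to $H_*(\ell) = \bF_p[\xi_1,\xi_2,\dots]\otimes E[\tau_2,\tau_3,\dots]$ shows there are no nonzero classes in odd degrees strictly between~$0$ and $|\tau_2| = 2p^2-1$, an interval containing $2q+1$ for $p\ge 3$; third, $(P^1)^{p-1}\cdot P^1 = (P^1)^p = 0$ already in~$A$, since iterating the Adem relations gives $(P^1)^k \doteq P^k$ for $1 \le k \le p-1$, while the Adem relation $P^1 P^{p-1} = -p(p-2)\,P^p$ vanishes modulo~$p$. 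This yields a surjection $\Sigma^q A/A(\beta, Q_1, (P^1)^{p-1}) \onto I$, which I would promote to an isomorphism by comparing Hilbert series via the short exact sequence $0 \to I \to A/\!/E(1) \to C \to 0$ together with the known Poincar\'e series of $A/\!/E(1)$ and $C = A/\!/A(1)$.

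Finally, for~(2), combine the presentation of $I$ from (3) with the short exact sequence $0 \to K \to \Sigma^q A/\!/E(1) \to I \to 0$. The annihilator generators $\Sigma^q\beta$ and $\Sigma^q Q_1$ already vanish in $\Sigma^q A/\!/E(1)$, so $K$ is the cyclic $A$-submodule generated by the single lift $\Sigma^q(P^1)^{p-1}$ in total degree~$pq$. A direct check, using the same degree-vanishing argument and the relations already present, identifies its annihilator with the defining left ideal $A\beta + AP^1$ of $A/\!/A(1)$, so $K \cong \Sigma^{pq} A/\!/A(1)$; the concluding isomorphism $K\cong \Sigma^{pq} C$ follows on comparison with~(4). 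The main obstacle throughout is not writing down the relations---those are elementary---but proving \emph{completeness} of the annihilator presentations in~(2) and~(3), i.e., that no hidden additional relations are lurking. The cleanest route to completeness is the Hilbert-series comparison described above, leveraging the exact sequence linking $K$, $\Sigma^q A/\!/E(1)$, $I$, $A/\!/E(1)$, and $C$.
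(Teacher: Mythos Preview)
The paper does not actually prove this proposition; it simply cites \cite{Rog03}*{Prop.~5.1(b)} and states the result. So there is no ``paper's proof'' to compare your argument against. That said, your approach is correct and essentially self-contained, so let me comment briefly on it.

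Your argument for part~(1) is sound: the target degree forces $\psi^*(\Sigma^q 1)$ to be a scalar multiple of~$P^1$, and the nontriviality of the scalar follows from the known effect of~$\psi$ on $\pi_q(\ell)$ together with the collapse of the Adams spectral sequence for~$\ell$. Parts~(4), (3), and~(2) then proceed exactly as you outline, and your individual relation checks are all correct. In particular, the odd-degree vanishing argument works for both $Q_1 P^1$ (degree $4p-3$) and $\beta (P^1)^{p-1}$ (degree $2(p-1)^2+1$), since both lie strictly below $|\tau_2| = 2p^2-1$; and the Adem computation $(P^1)^p = 0$ is accurate.

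The one point worth making explicit is that your Hilbert-series comparison really does close the loop. Writing $f(t)$ for the Poincar\'e series of $A/\!/E(1)$ and using $P_{A(1)}(t) = P_{E(1)}(t)\cdot\frac{1-t^{pq}}{1-t^q}$, one finds
\[
P_I(t) = f(t) - P_{A/\!/A(1)}(t) = t^q \, P_{A/\!/A(1)}(t)\,(1 + t^q + \cdots + t^{(p-2)q}),
\]
which matches the Poincar\'e series of $\Sigma^q A \otimes_{A(1)} \bigl(A(1)/A(1)(\beta, Q_1, (P^1)^{p-1})\bigr)$ once one checks that this $A(1)$-quotient has basis $1, P^1, \dots, (P^1)^{p-2}$. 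Likewise $P_K(t) = t^{pq} P_{A/\!/A(1)}(t)$, so the cyclic surjection $\Sigma^{pq} A/\!/A(1) \to K$ that you construct is an isomorphism. You were right to flag completeness of the annihilators as the only nontrivial step, and the series computation you propose does handle it.
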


\begin{proposition}
\begin{align*}
\Ext_A(C, \bF_p) &= \Ext_{A(1)}(\bF_p, \bF_p) \\
	&= \bF_p[v_0, b, w_1]
		\otimes E[a_i \mid 0<i<p]/(\sim)
\end{align*}
is a free $\bF_p[w_1]$-module, where the ideal
$(\sim)$ imposes the relations $v_0 a_i = 0$ and
$$
a_i a_j = \begin{cases}
	(-1)^{i-1} v_0^{p-2} b & \text{for $i+j=p$,} \\
	0 & \text{otherwise.}
\end{cases}
$$
The generators are graded as follows.
$$
\begin{tabular}{ >{$}c<{$}|>{$}c<{$} >{$}c<{$} >{$}c<{$} >{$}c<{$} }
& v_0 & a_i & b & w_1 \\
\hline
s & 1 & i & 2 & p \\
t & 1 & i(q+1)-1 & pq & p(q+1) \\
t-s & 0 & iq-1 & pq-2 & pq
\end{tabular}
$$
\end{proposition}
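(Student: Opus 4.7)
The plan is a two-step reduction. First, since $C = A /\!/ A(1) = A \otimes_{A(1)} \bF_p$ and the Milnor--Moore theorem ensures that $A$ is free as a right module over the sub-Hopf-algebra $A(1)$, the standard change-of-rings isomorphism yields
$$
\Ext_A^{s,t}(C, \bF_p) \cong \Ext_{A(1)}^{s,t}(\bF_p, \bF_p),
$$
which gives the first equality in the statement. This is the odd-primary analogue of the identification $\Ext_A(H^*(ko), \bF_2) \cong \Ext_{A(1)}(\bF_2, \bF_2)$ used in Proposition~\ref{prop:cohko}, and reduces the problem to computing the cohomology of the finite graded Hopf algebra $A(1) = \langle \beta, P^1 \rangle$.

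I would then compute $\Ext_{A(1)}(\bF_p, \bF_p)$ via a Cartan--Eilenberg spectral sequence associated with the normal inclusion of Hopf algebras $E(1) = E[\beta, Q_1] \hookrightarrow A(1)$, of the form
$$
E_2 = \Ext_{A(1)/\!/E(1)}(\bF_p, \bF_p[v_0, v_1]) \Longrightarrow \Ext_{A(1)}(\bF_p, \bF_p),
$$
with coefficient ring $\Ext_{E(1)}(\bF_p, \bF_p) = \bF_p[v_0, v_1]$ contributing the generator $v_0$. The outer Ext is over the quotient $A(1)/\!/E(1)$, a small truncated-polynomial Hopf algebra coming from (the class of) $P^1$. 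A straightforward analysis of the spectral sequence, together with bidegree counting against the internal degrees of the Milnor primitives, then produces the generators: $v_0$ from the coefficients, the exterior classes $a_i$ for $0 < i < p$ detected along the $v_1^i$-line, the class $b$ in bidegree $(2, pq)$ carrying the first sustained product, and the polynomial periodicity generator $w_1$ in bidegree $(p, p(q+1))$. As an alternative, one can bypass the spectral sequence by constructing an explicit minimal free resolution of $\bF_p$ over $A(1)$ in low internal degree and reading off the generators directly.

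Finally, I would verify the multiplicative relations. The annihilation $v_0 a_i = 0$ can be read off from the spectral sequence, since the putative product lies in a bidegree that contains no surviving generator. The key relation $a_i a_{p-i} = (-1)^{i-1} v_0^{p-2} b$ is the most delicate: the target bidegree $(p, p(q+1)-2)$ contains only the single class $v_0^{p-2} b$, so one must only determine the coefficient, which I would compute by an explicit Yoneda composition in a minimal resolution, carefully tracking the coproduct of $A(1)$ and the Koszul signs. Freeness over $\bF_p[w_1]$ is then immediate once $w_1$ is identified as a polynomial generator.

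The main obstacle will be pinning down the sign pattern $(-1)^{i-1}$ in the products $a_i a_{p-i}$, since this requires careful cobar bookkeeping with the comultiplication of $A(1)$. The rest of the structure is forced by the bidegree count and the known form of $\Ext_{E(1)}(\bF_p, \bF_p)$. Since the computation of $\Ext_{A(1)}(\bF_p, \bF_p)$ at odd primes is classical (implicit in the odd-primary Adams spectral sequence literature for the $K(1)$-local sphere), one could alternatively cite an existing reference rather than give a fully internal argument.
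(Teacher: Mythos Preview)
Your plan is essentially the paper's proof: after the same change-of-rings, the paper runs the multiplicative Davis--Mahowald spectral sequence for the extension $P(0) = \langle P^1\rangle \subset A(1)$ with $A(1)/\!/P(0) = E[\beta,Q_1]$, which is exactly your Cartan--Eilenberg spectral sequence for $E(1)\hookrightarrow A(1)$ repackaged at the $E_1$-page via an explicit Koszul resolution $(A(1)/\!/P(0))_*\otimes \bF_p[v_0,v_1]$. The sign is obtained just as you propose, by an explicit chain map on the periodic length-two injective $P(0)_*$-resolutions of $\bar R^{i-1}$, where it reduces to the congruence $\binom{p-1}{i-1}\equiv(-1)^{i-1}\pmod p$.
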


\begin{proof}
This is asserted without signs in~\cite{Hil08}*{Thm.~3.6}.  We give
a proof using the multiplicative Davis--Mahowald spectral sequence
of~\cite{BR21}*{Ch.~2}.  Let $P(0) = \< P^1 \> = \bF_p[P^1]/((P^1)^p)$ be
the sub Hopf algebra of $A(1)$ generated by $P^1$, so that $A(1)/\!/P(0)
= E[\beta, Q_1]$ as a left $A(1)$-module quotient coalgebra.  Dually,
$P(0)_* = \bF_p[\xi_1]/(\xi_1^p)$ is a quotient Hopf algebra of $A(1)_* =
E[\tau_0, \tau_1] \otimes \bF_p[\xi_1]/(\xi_1^p)$, with $(A(1)/\!/P(0))_*
= E[\tau_0, \tau_1]$ as a left $A(1)_*$-comodule subalgebra.  Let $R^*
= \bF_p[v_0, v_1]$ be the graded $A(1)_*$-comodule algebra with coaction
$\nu(v_0) = 1 \otimes v_0$ and $\nu(v_1) = 1 \otimes v_1 + \xi_1 \otimes
v_0$, and give
$$
(A(1)/\!/P(0))_* \otimes R^*
$$
the differential $\delta(\tau_0) = v_0$, $\delta(\tau_1) = v_1$,
$\delta(v_0) = 0$, $\delta(v_1) = 0$ and the diagonal $A(1)_*$-coaction,
making it a differential graded $A(1)_*$-comodule algebra resolution
of~$\bF_p$.
We obtain an algebra spectral sequence
$$
E_1^{\sigma,s,t} = \Ext_{P(0)_*}^{s-\sigma,t}(\bF_p, R^\sigma)
	\Longrightarrow_\sigma \Ext_{A(1)_*}^{s,t}(\bF_p, \bF_p)
		\cong \Ext_{A(1)}^{s,t}(\bF_p, \bF_p) \,.
$$
Since $v_1^p \in R^p$ is $A(1)_*$-comodule primitive, there is an
extension of algebra spectral sequences $\bF_p[v_1^p] \to E_1^{*,*,*}
\to \bar E_1^{*,*,*}$ with
$$
\bar E_1^{\sigma,s,t} = \Ext_{P(0)_*}^{s-\sigma,t}(\bF_p, \bar R^\sigma) \,,
$$
where $\bar R^* = R^*/(v_1^p)$.  Here $\bar R^{i-1} = \bF_p\{v_0^{i-1},
\dots, v_1^{i-1}\}$ for $0<i<p$, while $\bar R^{i-1}
= \bF_p\{v_0^{i-1}, \dots, v_0^{i-p} v_1^{p-1}\} \cong P(0)_*
\{v_0^{i-p}\}$ for $i \ge p$.
For $0<i<p$ we have minimal injective resolutions of period~$2$
$$
0 \to \bar R^{i-1} \overset{\eta}\longto
	P(0)_* \{v_0^{i-1}\} \overset{\delta^0}\longto
	\Sigma^{iq} P(0)_* \{v_0^{i-1}\} \overset{\delta^1}\longto
	\Sigma^{pq} P(0)_* \{v_0^{i-1}\} \longto \dots
$$
with $\delta^0$ dual to multiplication by $(P^1)^i$ and $\delta^1$
dual to multiplication by $(P^1)^{p-i}$, which implies that
$$
\Ext_{P(0)_*}^{*,*}(\bF_p, \bar R^{i-1})
	= E[a'_i] \otimes \bF_p[b] \{v_0^{i-1}\}
$$
where $|a'_i| = (1, iq)$ and $|b| = (2, pq)$.  For $i\ge p$, on the
other hand, we have $\Ext_{P(0)_*}^{*,*}(\bF_p, \bar R^{i-1}) =
\bF_p\{v_0^{i-1}\}$.
The product $\phi \: \bar R^{i-1} \otimes \bar R^{p-i-1}
\to \bar R^{p-2}$ extends to a chain map $\phi_*$ from the tensor product
of the injective resolutions for $\bar R^{i-1}$ and $\bar R^{p-i-1}$
to the injective resolution for $\bar R^{p-2}$, in which
$$
\phi_2 \: \Sigma^{iq} P(0)_* \{v_0^{i-1}\}
\otimes \Sigma^{(p-i)q} P(0)_* \{v_0^{p-i-1}\}
\longto \Sigma^{pq} P(0)_* \{v_0^{p-2}\}
$$
maps $v_0^{i-1} \otimes v_0^{p-i-1}$ to $(-1)^{i-1} v_0^{p-2}$.
(The reader may prefer to verify this in the dual context of projective
$P(0)$-module resolutions, keeping in mind that $\binom{p-1}{i-1}
\equiv (-1)^{i-1} \mod p$.)  It follows that
$$
v_0^{i-1} a'_i \cdot v_0^{p-i-1} a'_{p-i} = (-1)^{i-1} v_0^{p-2} b
$$
in $E_1^{*,*,*}$.  On the other hand, the products $v_0 \cdot v_0^{i-1}
a'_i$ and $v_0^{i-1} a'_i \cdot v_0^{j-1} a'_{j-i}$ for $i+j \ne p$
are zero because they lie in trivial groups.  The Davis--Mahowald
spectral sequence collapses at the $E_1$-term, since all differentials
on the algebra generators $v_0$, $v_0^{i-1} a'_i$, $b$ and $v_1^p$
land in trivial groups.  Furthermore, there is no room
for hidden multiplicative extensions.  Letting
$a_i \in \Ext_{A(1)}^{i-1+1,i-1+iq}(\bF_p, \bF_p)$,
$b \in \Ext_{A(1)}^{2,pq}(\bF_p, \bF_p)$ and $w_1 \in
\Ext_{A(1)}^{p,p(q+1)}(\bF_p, \bF_p)$ be detected by $v_0^{i-1} a'_i$,
$b$ and $v_1^p$, respectively, we obtain the stated computation of
$\Ext_{A(1)}(\bF_p, \bF_p)$.
\end{proof}

\begin{lemma}
The $A$-module extension
\begin{align*}
0 \from C \overset{q_Y}\longfrom H^*(\ell)
        \overset{i_Y}\longfrom I \from 0
        \tag{$E_\ell$}
\end{align*}
induces a long exact sequence
$$
\dots \to E_2^{s-1,t}(\ell)
        \overset{i_Y^*}\longto \Ext^{s-1,t}_A(I, \bF_p)
        \overset{\delta_Y}\longto \Ext^{s,t}_A(C, \bF_p)
        \overset{q_Y^*}\longto E_2^{s,t}(\ell)
\to \cdots
$$
with
$$
\bF_p[v_0, w_1]\{v_1, \dots, v_1^{p-1}\}
	\cong \ker(\delta_Y) \into \Ext_A^{*,*}(I, \bF_p)
$$
and
$$
\Ext_A^{*,*}(C, \bF_p) \onto \cok(\delta_Y) \cong \bF_p[v_0, w_1]
\,.
$$
For each nonzero $x \in \Ext_A^{s,t}(C, \bF_p)$ that maps trivially to
$\cok(\delta_Y)$ there is a unique lift $\ldY{x} \in \Ext_A^{s-1,t}(I,
\bF_p)$ satisfying $\delta_Y(\ldY{x}) = x$, and $\Ext_A(I, \bF_p)$
consists of these $\ldY{x}$, extended by the free $\bF_p[v_0,
w_1]$-module on $i_Y^*(v_1^i) = v_0 \ldY{a_i}$ for $0<i<p$.
\end{lemma}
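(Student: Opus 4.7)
The plan is to parallel the strategy of Lemma~\ref{lem:C-Hko-I}: compute $\im(q_Y^*)$ and $\cok(q_Y^*)$ explicitly, and then read off $\ker(\delta_Y)$ and $\cok(\delta_Y)$ from exactness of the long exact sequence. Since $C = A/\!/A(1)$ and $H^*(\ell) = A/\!/E(1)$, the change-of-rings isomorphisms identify $q_Y^* \: \Ext_A(C, \bF_p) \to E_2(\ell)$ with the restriction homomorphism $\Ext_{A(1)}(\bF_p, \bF_p) \to \Ext_{E(1)}(\bF_p, \bF_p)$ induced by the inclusion $E(1) \subset A(1)$. Inspecting the algebra generators, $v_0$ restricts to the class of the same name, while $a_i$ and $b$ restrict to zero because their bidegrees have $t-s = iq-1$ and $t-s = pq-2$ respectively, neither of which is a nonnegative multiple of~$q$, the only possible $(t-s)$-values of nonzero classes in $E_2(\ell) = \bF_p[v_0, v_1]$. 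Finally $w_1 \in \Ext_{A(1)}^{p,p(q+1)}$ restricts to the unique nonzero class $v_1^p$ in that bidegree, with nontriviality checked directly against the minimal $E(1)$-resolution of~$\bF_p$. Consequently $\im(q_Y^*) = \bF_p[v_0, w_1]$ with $w_1$ detected by $v_1^p$, and $\cok(q_Y^*) \cong \bF_p[v_0, w_1]\{v_1, \ldots, v_1^{p-1}\}$.

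Exactness then supplies the stated $\cok(\delta_Y) \cong \im(q_Y^*) = \bF_p[v_0, w_1]$ and $\ker(\delta_Y) = \im(i_Y^*) \cong \cok(q_Y^*)$ as bigraded $\bF_p[v_0, w_1]$-modules. For $x \in \Ext_A^{s,t}(C, \bF_p)$ lying in $\ker(q_Y^*)$, a lift $\ldY{x}$ with $\delta_Y(\ldY{x}) = x$ exists; uniqueness requires $\ker(\delta_Y) \cap \Ext_A^{s-1,t}(I, \bF_p) = 0$. Since $\ker(\delta_Y)$ is concentrated in bidegrees $(a + i + pk,\, a + (i+pk)(q+1))$ with $a, k \geq 0$ and $0 < i < p$, this reduces to verifying that no $\bF_p$-basis monomial of $\ker(q_Y^*) \subset \Ext_{A(1)}(\bF_p, \bF_p)$---namely $v_0^a b^c w_1^k$ with $c \geq 1$, or $b^c w_1^k a_i$ with $c \geq 0$ and $0 < i < p$---occupies a bidegree $(s, t)$ with $t-s = (i' + pk')q - 1$ and $s - 1 \geq i' + pk'$. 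The first family is excluded by the congruence $q \nmid 1 - 2c$, valid since $q = 2(p-1)$ is even while $1 - 2c$ is odd. For the second family, the analogous congruence forces $c = m(p-1)$ with $m \geq 0$: when $m = 0$ one has $s = pk + i$, one short of the required $1 + i + pk$, and when $m \geq 1$ a direct inequality shows $s = 2m(p-1) + pk + i < 1 + m(p^2 - p - 1) + pk + i$ for every odd $p \geq 3$.

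Finally, the relation $v_0 a_i = 0$ in $\Ext_A(C, \bF_p)$ yields $\delta_Y(v_0 \ldY{a_i}) = 0$, placing $v_0 \ldY{a_i}$ in $\ker(\delta_Y)$ at the one-dimensional bidegree $(i, i(q+1))$ spanned by $i_Y^*(v_1^i)$. A Poincaré-series count comparing the proposed decomposition---the lifts $\ldY{x}$ extended by a free $\bF_p[v_0, w_1]$-module on the classes $i_Y^*(v_1^i)$, subject only to the identification $v_0 \ldY{a_i} = i_Y^*(v_1^i)$---against the dimensions forced by $\dim \Ext_A^{*,*}(I, \bF_p) = \dim \ker(\delta_Y) + \dim \ker(q_Y^*)$ then forces $v_0 \ldY{a_i} \neq 0$, with the scalar absorbed into a normalization of $\ldY{a_i}$ (or, equivalently, of~$v_1$). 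The main obstacle is the bidegree bookkeeping in the uniqueness step, since one must separately rule out each $b$- and $a_i$-multiple as a potential source of ambiguity; once these congruence obstructions are verified, the remaining assembly of $\Ext_A(I, \bF_p)$ follows formally from the long exact sequence and the dimension count.
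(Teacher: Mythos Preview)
Your proof follows the same strategy as the paper's proof of the analogous $p=2$ lemma (Lemma~\ref{lem:C-Hko-I}); the paper in fact states the odd-primary lemma without proof, relying on this parallel. Your computation of $\im(q_Y^*)$, $\cok(q_Y^*)$, and the bidegree analysis for uniqueness of the lifts $\ldY{x}$ are all correct.

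There is one gap at the end. The Poincar\'e-series count does \emph{not} force $v_0\,\ldY{a_i} \ne 0$. At bidegree $(i, i(q+1))$ you have $\dim \Ext_A(I,\bF_p) = \dim\ker(\delta_Y) + \dim\ker(q_Y^*)^{i+1,i(q+1)} = 1 + 0 = 1$, and this dimension is accounted for by $i_Y^*(v_1^i)$ alone, regardless of whether $v_0\,\ldY{a_i}$ vanishes or equals a unit times $i_Y^*(v_1^i)$; the classes $\ldY{x}$ themselves do not populate that bidegree (since $v_0 a_i = 0$ in $\Ext_A(C,\bF_p)$), so no double-counting arises either way. The relation $i_Y^*(v_1^i) = v_0\,\ldY{a_i}$ is genuinely a statement about the $v_0$-module extension structure on $\Ext_A(I,\bF_p)$ and requires a direct computation---for instance, using the presentation $I = \Sigma^q A/A(\beta, Q_1, (P^1)^{p-1})$, the fact that $\beta$ annihilates the generator gives $v_0 \cdot \ldY{a_1} \ne 0$ immediately, and the higher cases follow similarly. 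The paper handles the $p=2$ analog the same way: it asserts $i_Y^*(v) = h_0^3\,\ldY{h_2}$ ``as can be verified by direct calculation in $\Ext_A(I,\bF_2)$ or deduced as part of the next proof,'' rather than by a dimension argument.
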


\begin{lemma}
The $A$-module extension
\begin{align*}
0 \from I \overset{q_Z}\longfrom H^*(\Sigma^q \ell)
	\overset{i_Z}\longfrom K \from 0
	\tag{$E_{\Sigma^q \ell}$}
\end{align*}
induces a long exact sequence
$$
\dots \to E_2^{s-1,t}(\Sigma^q \ell)
	\overset{i_Z^*}\longto \Ext^{s-1,t}_A(K, \bF_p)
	\overset{\delta_Z}\longto \Ext^{s,t}_A(I, \bF_p)
	\overset{q_Z^*}\longto E_2^{s,t}(\Sigma^q \ell)
\to \cdots
$$
with
$$
\Sigma^q \bF_p[v_0, w_1] \{v_1^{p-1}\}
	\cong \ker(\delta_Z) \into \Ext_A^{*,*}(K, \bF_p)
$$
and
$$
\Ext_A^{*,*}(I, \bF_p) \onto \cok(\delta_Z) \cong
    \Sigma^q \bF_p[v_0, w_1] \{1, \dots, v_1^{p-2}\} \,.
$$
For each nonzero $\ldY{x} \in \Ext_A^{s,t}(I, \bF_p)$ that maps trivially to
$\cok(\delta_Z)$ there is a unique lift $\ldYdZ{x} \in \Ext_A^{s-1,t}(K, \bF_p)$
satisfying $\delta_Z(\ldYdZ{x}) = \ldY{x}$, and $\Ext_A(K, \bF_p)$ consists
of these $\ldYdZ{x}$, extended by the free $\bF_p[v_0, w_1]$-module
on $i_Z^*(\Sigma^q v_1^{p-1}) = v_0^{p-1} \ldYdZ{b}$.
\end{lemma}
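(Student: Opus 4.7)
The proof will parallel that of Lemma~\ref{lem:I-Hksp-K} at the prime~$2$. Applying $\Ext_A(-, \bF_p)$ to the extension $(E_{\Sigma^q \ell})$ yields the stated long exact sequence, so the task reduces to identifying the image of
$$
q_Z^* \: \Ext_A(I, \bF_p) \longto E_2(\Sigma^q \ell) = \Sigma^q \bF_p[v_0, v_1] = \Sigma^q \bF_p[v_0, w_1]\{1, v_1, \dots, v_1^{p-1}\},
$$
where $w_1 = v_1^p$; the claimed kernel and cokernel of~$\delta_Z$ will then follow from exactness.

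I claim that $\im(q_Z^*) = \Sigma^q \bF_p[v_0, w_1]\{1, v_1, \dots, v_1^{p-2}\}$. For the lower bound, the previous lemma provides classes $\ldY{a_i} \in \Ext_A^{i-1, i(q+1)-1}(I, \bF_p)$ for each $0 < i < p$. From $K = \Sigma^{pq} A /\!/ A(1)$ we have $\Ext_A^{s,t}(K, \bF_p) = 0$ whenever $t < pq$, and since $i(q+1) - 1 < pq$ precisely for $0 < i < p$, exactness forces $q_Z^*$ to be an isomorphism in these bidegrees. Each relevant bidegree of $E_2(\Sigma^q \ell)$ is spanned by $\Sigma^q v_1^{i-1}$, so $q_Z^*(\ldY{a_i}) = \Sigma^q v_1^{i-1}$ up to a unit; the asserted submodule then follows by $v_0$- and $w_1$-linearity. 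For the upper bound, the remaining $\bF_p[v_0, w_1]$-module generators of $\Ext_A(I, \bF_p)$ are lifts $\ldY{x}$ of classes $x \in \Ext_A(C, \bF_p)$ that involve at least one factor of~$b$ (including the products $a_i a_{p-i} = (-1)^{i-1} v_0^{p-2} b$). Since $b$ contributes $t - s \equiv -2 \pmod q$ to any monomial, each such $\ldY{x}$ lies in a bidegree with $t - s \not\equiv 0 \pmod q$, and hence maps into a trivial bidegree of $E_2(\Sigma^q \ell)$.

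Exactness now gives $\cok(\delta_Z) \cong \Sigma^q \bF_p[v_0, w_1]\{1, \dots, v_1^{p-2}\}$, together with $\ker(\delta_Z) = \im(i_Z^*) \cong \Sigma^q \bF_p[v_0, w_1]\{v_1^{p-1}\}$ as a submodule of $\Ext_A(K, \bF_p)$. The uniqueness of the lifts $\ldYdZ{x}$ then follows by a bidegree inspection: $\ker(\delta_Z)$ is concentrated in bigradings with $s \geq p-1$ and $t-s$ a positive multiple of~$pq$, disjoint from those of the $\ldYdZ{x}$-classes (which can be verified directly using the gradings of the algebra generators of $\Ext_A(C, \bF_p)$). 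Finally, $i_Z^*(\Sigma^q v_1^{p-1}) = v_0^{p-1} \ldYdZ{b}$: both classes lie in $\Ext_A^{p-1, pq+p-1}(K, \bF_p)$, which is one-dimensional (generated by $v_0^{p-1}$ acting on the bottom class $\ldYdZ{b} \in \Ext_A^{0, pq}(K, \bF_p)$ of $K \cong \Sigma^{pq} A /\!/ A(1)$), and $i_Z^*(\Sigma^q v_1^{p-1})$ is nonzero by exactness since $\Sigma^q v_1^{p-1} \notin \im(q_Z^*)$.

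The main obstacle is the upper-bound argument for $\im(q_Z^*)$, which requires a complete inventory of the $\bF_p[v_0, w_1]$-generators of $\Ext_A(I, \bF_p)$ beyond the $\ldY{a_i}$. The essential input is the congruence $t - s \equiv -2 \pmod q$ for every $b$-involving monomial, which places each such generator outside the image bidegrees available in $E_2(\Sigma^q \ell)$; once this is pinned down, the remaining verifications reduce to routine bidegree bookkeeping.
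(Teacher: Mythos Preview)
Your overall structure is correct and parallels the $p=2$ argument in Lemma~\ref{lem:I-Hksp-K} as intended: the lower bound via connectivity of $K$, the identification of $\ker(\delta_Z)$ and $\cok(\delta_Z)$ by exactness, and the relation $i_Z^*(\Sigma^q v_1^{p-1}) = v_0^{p-1}\ldYdZ{b}$ are all fine. However, the upper-bound step for $\im(q_Z^*)$ contains a genuine gap.

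Your assertion that every $b$-involving generator $\ldY{x}$ satisfies $t-s \not\equiv 0 \pmod q$ is false. For $x = a_i b^k$ with $k \ge 1$ the class $\ldY{x}$ has
\[
t-s \;=\; (iq-1) + k(pq-2) + 1 \;=\; iq + kpq - 2k \;\equiv\; -2k \pmod q,
\]
and since $q = 2(p-1)$ this vanishes modulo~$q$ whenever $(p-1)\mid k$. So for $k = p-1,\,2(p-1),\ldots$ the congruence alone does not rule out a nonzero image in $E_2(\Sigma^q\ell)$. (Your argument does work for the generators $\ldY{b^k}$, where $t-s \equiv 1-2k$ is odd while $q$ is even.)

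The conclusion is nevertheless correct; the repair is a sharper bidegree check. In $E_2(\Sigma^q\ell) = \Sigma^q\bF_p[v_0,v_1]$, the unique monomial in bidegree $(s,t)$ with $t-s = nq$ is $\Sigma^q v_0^{\,s-n+1} v_1^{\,n-1}$, which exists only when $s \ge n-1$. For $\ldY{a_i b^{m(p-1)}}$ with $m\ge 1$ one finds $s = i + 2m(p-1) - 1$ and $n = i + m(p^2 - p - 1)$, so the required $v_0$-exponent is
\[
s - (n-1) \;=\; m\bigl(3p - 1 - p^2\bigr) \;<\; 0
\]
for every odd prime~$p$. Hence these classes do land in trivial bidegrees of $E_2(\Sigma^q\ell)$, but for a filtration reason rather than a congruence reason. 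Replacing your $\pmod q$ sentence with this vanishing-line observation (which is the odd-primary analogue of the ``trivial bidegree'' inspection in the proof of Lemma~\ref{lem:I-Hksp-K}) closes the gap.
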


\begin{remark}
$\Ext_A(K, \bF_p)$ is a free 
$\Ext_{A(1)}(\bF_p, \bF_p)$-module on one generator, $\ldYdZ{b}$,
as Proposition~\ref{prop:KICp3} indicates.
\end{remark}

\begin{lemma}
\leavevmode
\begin{enumerate}
\item
The kernel of $\delta_Y \delta_Z \: \Ext_A^{*,*}(K, \bF_p)
\to \Ext_A^{*+2,*}(C, \bF_p)$ is
$$
\ker(\delta_Y \delta_Z)
	= \bF_p[v_0, w_1] \{v_0^{p-1} \ldYdZ{b}\} \,.
$$
\item
The cokernel of $\delta_Y \delta_Z \: \Ext_A^{*-2,*}(K, \bF_p)
	\to \Ext_A^{*,*}(C, \bF_p)$ is
$$
\cok(\delta_Y \delta_Z) = \bF_p[v_0, w_1] \oplus
	\bF_p[w_1]\{a_1, \dots, a_{p-1}\} \,.
$$
\end{enumerate}
\end{lemma}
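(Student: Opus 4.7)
The plan is to mirror Lemma~\ref{lem:dYdZ} by invoking the kernel--cokernel exact sequence
\begin{multline*}
0 \to \ker(\delta_Z) \longto \ker(\delta_Y \delta_Z)
	\overset{\delta_Z}\longto \ker(\delta_Y) \\
\overset{\partial}\longto \cok(\delta_Z) \overset{\delta_Y}\longto
	\cok(\delta_Y \delta_Z) \longto \cok(\delta_Y) \to 0
\end{multline*}
of $\Ext_{A(1)}(\bF_p, \bF_p)$-modules, whose four outer terms are supplied by the two preceding lemmas.

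The key step is to identify the connecting homomorphism $\partial$, which factors as the composite
$$
\ker(\delta_Y) = \bF_p[v_0, w_1]\{v_1, \dots, v_1^{p-1}\}
\into \Ext_A(I, \bF_p) \onto \cok(\delta_Z).
$$
Under $i_Y^*$ the generator $v_1^i$ is carried to $v_0 \ldY{a_i}$. A bidegree check shows that $\ldY{a_i}$ occupies the same bidegree as $\Sigma^q v_1^{i-1}$, and that $\cok(\delta_Z)$ is one-dimensional there; since $\ldY{a_i}$ is the only basis element of $\Ext_A(I, \bF_p)$ in that bidegree, it must project to a $p$-adic unit times $\Sigma^q v_1^{i-1}$. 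Hence $\partial(v_1^i) \doteq v_0 \cdot \Sigma^q v_1^{i-1}$, and as a map between free $\bF_p[v_0, w_1]$-modules on $p-1$ generators it is patently injective. Exactness then gives $\ker(\delta_Z) = \ker(\delta_Y \delta_Z)$, and substituting $\ker(\delta_Z) = \bF_p[v_0, w_1]\{v_0^{p-1} \ldYdZ{b}\}$ yields part~(1).

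For part~(2), the cokernel of $\partial$ is obtained from $\Sigma^q \bF_p[v_0, w_1]\{1, \dots, v_1^{p-2}\}$ by killing all $v_0$-multiples of the generators, leaving the free $\bF_p[w_1]$-module on $\Sigma^q 1, \dots, \Sigma^q v_1^{p-2}$. Under $\delta_Y$ the lift $\ldY{a_i}$ of $\Sigma^q v_1^{i-1}$ is sent to $a_i$, so $\cok(\partial)$ maps injectively onto $\bF_p[w_1]\{a_1, \dots, a_{p-1}\} \subset \cok(\delta_Y \delta_Z)$. The resulting short exact sequence
$$
0 \to \bF_p[w_1]\{a_1, \dots, a_{p-1}\} \longto \cok(\delta_Y \delta_Z)
	\longto \bF_p[v_0, w_1] \to 0
$$
splits because the subalgebra $\bF_p[v_0, w_1] \subset \Ext_A(C, \bF_p)$ projects isomorphically onto $\cok(\delta_Y)$, and hence supplies a section on the nose. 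The main obstacle is the bidegree identification pinning down the image of $\ldY{a_i}$ in $\cok(\delta_Z)$; once that is settled, the remainder is routine bookkeeping.
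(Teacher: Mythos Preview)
Your proof is correct and follows precisely the strategy the paper uses for the $p=2$ analog, Lemma~\ref{lem:dYdZ}: the kernel--cokernel exact sequence together with an explicit identification of the connecting map~$\partial$ via the relation $i_Y^*(v_1^i) = v_0\,\ldY{a_i}$. The paper does not spell out a separate proof in the odd-primary section, evidently regarding it as a direct transcription of the $p=2$ argument, which is exactly what you have supplied; your added bidegree check that $\ldY{a_i}$ surjects onto the generator $\Sigma^q v_1^{i-1}$ of $\cok(\delta_Z)$ fills in the one detail not already recorded in the preceding lemma.
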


\begin{proposition}[\cite{Rog03}*{Prop.~5.1(b)}]
The $A$-module extension
\begin{equation*}
0 \to C \overset{i}\longto H^*(j) \overset{q}\longto \Sigma^{-1} K \to 0
	\tag{$E_j$}
\end{equation*}
is given by
$$
H^*(j) = \frac{A\{g_0, g_{pq-1}\}}
{A(\beta(g_0), P^1(g_0), P^p(g_0) \doteq \beta(g_{pq-1}), P^1(g_{pq-1}))}
$$
with $i(1) = g_0$ and $q(g_{pq-1}) = \Sigma^{pq-1} 1$.  In particular,
it is induced up from an extension over $A(2) = \< \beta, P^1, P^p \>$.
\end{proposition}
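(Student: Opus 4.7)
The plan is to mimic Davis's argument from the proof of Proposition~\ref{prop:cohj} at the prime~$2$. From Proposition~\ref{prop:KICp3} we have the short exact sequence $0 \to C \overset{i}\to H^*(j) \overset{q}\to \Sigma^{-1} K \to 0$ of $A$-modules, with $C = A/\!/A(1)$ and $\Sigma^{-1} K \cong \Sigma^{pq-1} A/\!/A(1)$. Set $g_0 = i(1) \in H^0(j)$ and choose any lift $g_{pq-1} \in H^{pq-1}(j)$ of $\Sigma^{pq-1} 1 \in \Sigma^{-1} K$. Then $g_0$ and $g_{pq-1}$ generate $H^*(j)$ over $A$, and since $g_0$ comes from $C = A/\!/A(1)$ we have $\beta(g_0) = 0 = P^1(g_0)$. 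The extension is thus pinned down by the values of $\beta(g_{pq-1}) \in C_{pq}$ and $P^1(g_{pq-1}) \in C_{(p+1)q-1} = C_{2p^2-3}$.

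First I would enumerate the candidates by a degree count on $C$. Using the dual presentation $(A/\!/A(1))_* \cong E[\tau_i \mid i \geq 2] \otimes P[\xi_1^p, \xi_j \mid j \geq 2]$, whose lowest positive internal degrees are $pq = 2p^2-2p$ (for $\xi_1^p$), $2p^2-2$ (for $\xi_2$), and $2p^2-1$ (for $\tau_2$), one sees that $C_{pq}$ is one-dimensional, spanned by the dual of $\xi_1^p$, which may be identified with $P^p(g_0)$, while $C_{2p^2-3} = 0$ because the residue $2p^2-3 - pq = 2p-3$ lies strictly below $pq$. Hence $P^1(g_{pq-1}) = 0$ automatically, and $\beta(g_{pq-1}) = \lambda \, P^p(g_0)$ for some $\lambda \in \bF_p$.

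Next I would show $\lambda$ is a unit by contradiction. If $\lambda = 0$, then $(E_j)$ splits as $A$-modules and the Adams $E_2$-term decomposes as $\Ext_{A(1)}(\bF_p, \bF_p) \oplus \Sigma^{pq-1} \Ext_{A(1)}(\bF_p, \bF_p)$. In the second summand at topological degree $pq-1$ sits the $v_0$-tower $\{v_0^s [\Sigma^{pq-1} 1]\}_{s \geq 0}$. Theorem~\ref{thm:id2q} identifies the associated $d_2$-differential with the Yoneda product by $e_Z e_Y$, and, using $v_0$-linearity together with the relation $v_0^{p-1} b = 0$ in $\Ext_{A(1)}(\bF_p, \bF_p)$ (which follows from $a_i a_{p-i} \doteq v_0^{p-2} b$ and $v_0 a_i = 0$), one finds that $d_2$ truncates this tower only down to Adams filtration $s = p-1$. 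Because $v_0$ is a permanent cycle, no further pattern of $v_0$-linear differentials can eliminate the surviving infinite tail consistently, so an infinite $v_0$-tower persists to $E_\infty(j)$, contradicting $\pi_{pq-1}(j) \cong \bZ/p^2$ recorded in the preceding proposition. Therefore $\lambda \neq 0$, i.e.\ $\beta(g_{pq-1}) \doteq P^p(g_0)$.

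Finally, since all the defining relations involve only $\beta, P^1, P^p \in A(2) = \langle \beta, P^1, P^p \rangle$, the presentation exhibits $H^*(j)$ as induced up from the corresponding $A(2)$-module. The main obstacle will be the second step, since it requires quantifying the effect of the Adams differentials in the hypothetical split setting rather than appealing to purely algebraic information about $A$-module extensions.
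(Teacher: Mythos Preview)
Your overall strategy mirrors the paper's: parameterize the extension by a scalar $\lambda$ and rule out $\lambda = 0$ by comparison with $\pi_{pq-1}(j) \cong \bZ/p^2$. Your degree count on $(A/\!/A(1))_*$ is a correct and more explicit substitute for the paper's one-line change-of-rings computation $\Ext_A^1(\Sigma^{-1}K, C) \cong \bF_p$.

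The gap is in your contradiction argument. The assertion that ``an infinite $v_0$-tower persists to $E_\infty(j)$'' is not justified, and in fact is false: in the hypothetical split $E_2$-term the column $t-s = pq$ contains the infinite tower $\bF_p[v_0]\{w_1\}$ from the unshifted summand, starting in filtration $s=p$, and $v_0$-linear differentials originating there can and do truncate the tower in column $pq-1$ to finite length---they must, since $\pi_{pq}(j)=0$. Your appeal to Theorem~\ref{thm:id2q} correctly shows $d_2(\Sigma^{-1} v_0^i\ldYdZ{b}) \doteq v_0^i b$, hence vanishes for $i \geq p-1$, but this is not what drives the contradiction and is in fact unnecessary.

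The paper's argument is a pure bidegree count. In the split $E_2$-term, column $t-s=pq-2$ is concentrated in filtrations $2 \leq s \leq p$ (the classes $v_0^i b$ for $0\le i\le p-2$), while column $t-s=pq$ is concentrated in filtrations $s \geq p$. Hence the three classes $\Sigma^{-1} v_0^i \ldYdZ{b}$ for $i \in \{p-1,\,p,\,p+1\}$ are permanent cycles (any $d_r$ would land in column $pq-2$ at $s \geq p+1$, which is empty) and cannot be boundaries (any $d_r$ hitting them would originate in column $pq$ at $s \leq p-1$, also empty). Three surviving classes force $|\pi_{pq-1}(j)| \geq p^3$, which is the desired contradiction. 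No input from Theorem~\ref{thm:id2q} is required.
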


\begin{proof}
By change-of-rings, $\Ext_A^1(\Sigma^{-1} K, C) \cong
\Ext_{A(1)}^1(\Sigma^{pq-1} \bF_p, A/\!/A(1)) \cong \bF_p$.  If the
extension were trivial, then
$$
E_2(j) \cong \Ext_{A(1)}(\bF_p, \bF_p)
	\oplus \Ext_{A(1)}(\Sigma^{pq-1} \bF_p, \bF_p)
$$
and the three classes $\Sigma^{-1} v_0^i \ldYdZ{b}$ for $p-1 \le i
\le p+1$ would survive to $E_\infty(j)$.  This contradicts $\pi_{pq-1}(j)
\cong \bZ/p^2$.  Hence the extension is nontrivial, which can only happen
if $\beta(g_{pq-1})$ is a unit times $P^p(g_0)$.
\end{proof}

\begin{lemma}
The $A$-module extension~$(E_j)$ induces a long exact sequence
$$
\dots \to \Ext_A^{s-1,t}(C, \bF_p)
	\overset{\delta_X}\longto \Ext_A^{s,t}(\Sigma^{-1} K, \bF_p)
	\overset{q^*}\longto E_2^{s,t}(j)
	\overset{i^*}\longto \Ext_A^{s,t}(C, \bF_p) \to \dots
$$
where $\delta_X$ is a derivation that vanishes on $v_0$, $a_i$
and $b$, with $\delta_X(w_1) \doteq \Sigma^{-1} v_0^{p+1}
\ldYdZ{b}$.
Hence $\delta_X$ maps
$$
\coim(\delta_X) = \bF_p[v_0]\{w_1^k
	\mid k \not\equiv 0 \mod p\}
$$
isomorphically to
$$
\im(\delta_X) = \bF_p[v_0]\{\Sigma^{-1} v_0^{p+1} w_1^{k-1} \ldYdZ{b}
	\mid k \not\equiv 0 \mod p\} \,.
$$
\end{lemma}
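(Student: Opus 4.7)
The plan is to mirror the argument used in the proof of Theorem~\ref{thm:E3j} at $p=2$. The starting point is that $q^*\delta_X = 0$ combined with Theorem~\ref{thm:id2q} yields $\delta_Y\delta_Z \circ \delta_X = 0$, so $\im(\delta_X) \subset \ker(\delta_Y\delta_Z) = \bF_p[v_0,w_1]\{v_0^{p-1}\ldYdZ{b}\}$. Since this submodule is free over $\bF_p[v_0,w_1]$, and in particular $v_0$-torsion free, $\delta_X$ automatically vanishes on all $v_0$-torsion in $\Ext_A(C,\bF_p)$. The relation $v_0 a_i = 0$ then forces $\delta_X(a_i) = 0$, and indeed $\delta_X$ kills every monomial with an $a_i$-factor.

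Next I would pin down $\delta_X$ on the remaining polynomial generators $v_0$, $b$, and $w_1$ by bidegree inspection. The submodule $\ker(\delta_Y\delta_Z)$ is supported in stems $t-s \in pq\cdot\bZ_{\geq 1}$. The target bidegrees for $\delta_X(v_0)$ and $\delta_X(b)$ have $t-s$ equal to $0$ and $pq-2$ respectively, neither of which lies in $pq\cdot\bZ_{\geq 1}$, so both values vanish. The target bidegree $(p+1,\,p(q+1)+1)$ of $\delta_X(w_1)$ inside $\Ext_A(K,\bF_p)$ has $t-s = pq$ and meets $\ker(\delta_Y\delta_Z)$ in a one-dimensional $\bF_p$-vector space spanned by $v_0^{p+1}\ldYdZ{b}$. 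Hence $\delta_X(w_1) = c\,\Sigma^{-1}v_0^{p+1}\ldYdZ{b}$ for some scalar $c \in \bF_p$.

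To see that $c$ is nonzero, I appeal to the order of $\pi_{pq-1}(j) \cong \bZ/p^2$ from the preceding proposition. If $c$ were zero, the three classes $q^*(\Sigma^{-1}v_0^i\ldYdZ{b})$ for $p-1 \leq i \leq p+1$ would all be infinite cycles mapping to nonzero classes in $E_\infty(j)$, detecting a subgroup of order at least $p^3$ in $\pi_{pq-1}(j)$ and contradicting the known value. Thus $c$ is a $p$-adic unit, and $\delta_X(w_1) \doteq \Sigma^{-1}v_0^{p+1}\ldYdZ{b}$.

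The final step is to extend this computation multiplicatively using the derivation property of $\delta_X$, which holds because $(E_j)$ is dual to a square-zero extension of $A_*$-comodule algebras. The Leibniz rule gives $\delta_X(v_0^i w_1^k) = k\,\Sigma^{-1}v_0^{i+p+1}w_1^{k-1}\ldYdZ{b}$, which is nonzero in $\ker(\delta_Y\delta_Z)$ precisely when $p \nmid k$; meanwhile any Leibniz expansion involving a factor of $a_i$ or $b$ produces an element whose stem $t-s$ does not lie in $pq\cdot\bZ_{\geq 1}$, so cannot sit in $\ker(\delta_Y\delta_Z)$ and must vanish. The main obstacle I expect is the careful interpretation of the derivation and the $\Ext_A(C,\bF_p)$-module structure on $\Ext_A(\Sigma^{-1}K,\bF_p)$ via the $C$-bimodule structure on $\Sigma^{-1}K$; once this is in place, the claimed isomorphism between $\coim(\delta_X)$ and $\im(\delta_X)$ drops out by matching monomials under the identification $\Ext_A(K,\bF_p) \cong \Sigma^{pq}\Ext_{A(1)}(\bF_p,\bF_p)$.
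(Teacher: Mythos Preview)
Your proposal is correct and follows essentially the same approach as the paper. The paper does not give a separate proof of this lemma, but your argument faithfully mirrors the template laid out in the proof of Theorem~\ref{thm:E3j}: use $\im(\delta_X)\subset\ker(\delta_Y\delta_Z)$ from Theorem~\ref{thm:id2q} to force vanishing on $v_0$-torsion, check the remaining generators by bidegree, invoke the known order of $\pi_{pq-1}(j)$ to rule out $\delta_X(w_1)=0$, and then propagate via the derivation property. The only stylistic difference is that the paper's $p=2$ argument leans on $w_1^2$-linearity (coming from the $A(3)$-structure) rather than the Leibniz rule, but the Remark after Proposition~\ref{prop:ExtA2} explicitly records the derivation property as an equivalent alternative, and at odd primes the lemma itself is stated in those terms. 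One small simplification you could make: since $a_i a_{p-i}\doteq v_0^{p-2}b$ and $v_0 a_i=0$ give $v_0^{p-1}b=0$, the class $b$ is already $v_0$-torsion, so $\delta_X(b)=0$ follows from the torsion argument without a separate bidegree check.
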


\begin{definition}
Let $a_p \in E_2(j)$ be the image of $a_0^{p-2} h_1$ in $E_2(S)$.
\end{definition}

Up to a unit in $\bF_p$, this is also the image $q^*(\Sigma^{-1} v_0^{p-1}
\ldYdZ{b}) = q^*(i_Z^*(\Sigma^{q-1} v_1^{p-1}))$ of classes $\Sigma^{-1}
v_0^{p-1} \ldYdZ{b} \in \Ext_A(\Sigma^{-1} K, \bF_p)$ and $\Sigma^{q-1}
v_1^{p-1} \in E_2(\Sigma^{q-1} \ell)$.
At these primes, $E_2(j)$ has Krull dimension~$3$, while $E_3(j)$ has
Krull dimension~$2$.

\begin{theorem} \label{thm:E3jpodd}
Let $j$ be the connective image-of-$J$ spectrum at an odd prime~$p$.
There is an isomorphism
\begin{align*}
E_3(j) &\cong \bF_p[w_1] \{a_1, \dots, a_{p-1}\} \\
	&\quad\oplus \bF_p[w_1^p]
		\{v_0^i a_p w_1^{k-1}
		\mid \text{$0 \le i \le 1$, $0<k<p$} \} \\
	&\quad\oplus \bF_p[w_1^{p^2}]
		\{v_0^i a_p w_1^{pk-1}
		\mid \text{$0 \le i \le 2$, $0<k<p$} \} \\
	&\quad\oplus \bF_p[v_0, w_1^{p^2}]
		\{1, a_p w_1^{p^2-1} \}
\end{align*}
with generators in $(s,t)$-bidegrees $|v_0| = (1,1)$, $|a_i| =
(i,i(q+1)-1)$ for $0<i<p$, $|a_p| = (p-1,p(q+1)-2)$ and~$|w_1| =
(p,p(q+1))$.  The remaining nonzero differentials are
$$
d_r(v_0^i w_1^k) \doteq v_0^{r+1} a_p w_1^{k-1}
$$
for $r\ge3$, $i\ge0$ and $\ord_p(k) = r-1$.
Hence
\begin{align*}
E_\infty(j) &\cong \bF_p[v_0] \\
	&\quad\oplus \bF_p[w_1]\{a_1, \dots, a_{p-1}\} \\
	&\quad\oplus \bigoplus_{r\ge1}
		\bF_p[w_1^{p^r}]\{v_0^i a_p w_1^{p^{r-1}k-1}
		\mid \text{$0 \le i \le r$, $0<k<p$} \} \,.
\end{align*}
\end{theorem}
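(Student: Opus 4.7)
The plan is to mirror the $p=2$ argument of Theorem~\ref{thm:E3j}, using the preceding four lemmas as the odd-primary analogues of Lemmas~\ref{lem:C-Hko-I}, \ref{lem:I-Hksp-K}, and~\ref{lem:dYdZ}. First, I would combine the long exact sequence from $(E_j)$ with the calculation of $\delta_X$ to present $E_2(j)$ as an extension
$$
0 \to \im(q^*) \longto E_2(j) \longto \im(i^*) \to 0,
$$
with $\im(q^*) \cong \Ext_A(\Sigma^{-1}K, \bF_p)/\im(\delta_X)$ and $\im(i^*) = \ker(\delta_X) \subset \Ext_A(C, \bF_p)$. On the $\im(q^*)$ side the class $a_p = q^*(\Sigma^{-1}v_0^{p-1}\ldYdZ{b})$ (up to a $p$-adic unit) appears as a permanent contributor; on the $\im(i^*)$ side we have deleted $\bF_p[v_0]\{w_1^k \mid k \not\equiv 0 \bmod p\}$ from $\Ext_A(C, \bF_p)$.

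Second, I would apply Theorem~\ref{thm:id2q} to conclude $i^* d_2 q^* = \pm \delta_Y\delta_Z$. The preceding lemma's computation $\ker(\delta_Y\delta_Z) = \bF_p[v_0, w_1]\{v_0^{p-1}\ldYdZ{b}\}$ and $\cok(\delta_Y\delta_Z) = \bF_p[v_0, w_1] \oplus \bF_p[w_1]\{a_1, \dots, a_{p-1}\}$ then gives a lower bound on the rank of $d_2$ in each bidegree, and correspondingly an upper bound on $E_3(j)$ matching the claimed formula up to a finite number of classes near the bottom of the various $v_0$-towers built on $a_p w_1^{k-1}$.

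Third, I would use the $p$-local orders $\pi_{nq-1}(j) \cong \bZ/p^{1+\ord_p(n)}$ to pin down all further differentials. For each $n = p^{r-1}k$ with $0 < k < p$ and $r \geq 1$, the order $p^r$ of $\pi_{nq-1}(j)$ dictates that the $v_0$-tower on $a_p w_1^{n-1}$ has exactly $r$ nontrivial $v_0$-multiplications in $E_\infty(j)$. For $r = 1$ this is already achieved by $\delta_Y\delta_Z$; for $r \geq 2$ it forces extra differentials of the form $d_r(w_1^n) \doteq v_0^{r+1} a_p w_1^{n-1}$ (analogous to $d_2(w_1^k) = h_0^5 h_3 w_1^{k-1}$ for $k \equiv 2 \bmod 4$ at $p=2$), with $d_2$ when $\ord_p(n) = 1$ and the stated $d_r$ with $r \geq 3$ when $\ord_p(n) = r-1$. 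Collecting everything then yields the stated $E_3(j)$ and $E_\infty(j)$, in which $\bF_p[v_0]$ detects $\pi_0(j) = \bZ$, the $a_i$-summands detect $\pi_{kq-1}(j)$ for $0<k<p$, and the $v_0^i a_p w_1^{p^{r-1}k-1}$ classes detect the $p$-primary part of $\pi_{p^r k q - 1}(j) \cong \bZ/p^{r+1}$.

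The main obstacle is the third step: determining the precise pattern of extra $d_r$-differentials forced by the homotopy data and verifying that no further unexpected cancellations occur at any page. As in the $p=2$ case, this reduces to a careful bookkeeping argument matching $v_0$-tower lengths in successive pages of the spectral sequence against the cyclic orders of the homotopy groups, using the $\bF_p[w_1^{p}]$-linearity (via Adams periodicity) to propagate the pattern of differentials $w_1^{p-1}$-periodically.
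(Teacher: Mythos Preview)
Your proposal is correct and follows essentially the same route as the paper: present $E_2(j)$ via the long exact sequence for $(E_j)$ using the computation of $\delta_X$, invoke Theorem~\ref{thm:id2q} together with the kernel/cokernel of $\delta_Y\delta_Z$ to obtain an upper bound for $E_3(j)$, and then compare with the known orders of $\pi_*(j)$ to force the remaining $d_2$-differentials on $w_1^{pk}$ for $k\not\equiv 0\bmod p$ and the higher $d_r$'s. One bookkeeping slip to fix when you write it up: the $v_0$-tower on $a_p w_1^{n-1}$ lives in topological degree $npq-1$, not $nq-1$, so the relevant homotopy group is $\pi_{npq-1}(j)$ with $p$-part $\bZ/p^{r+1}$ (as you state correctly in your final sentence), not $\pi_{nq-1}(j)$ of order $p^r$.
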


\begin{proof}
By Theorem~\ref{thm:id2q}, the differential $d_2 \: E_2(j) \to E_2(j)$
maps the images under~$q^*$ of the classes $\Sigma^{-1} \ldYdZ{x}$ to
lifts over~$i^*$ of the classes~$x$, mapping only the classes
$$
\bF_p\{\Sigma^{-1} v_0^{p-1+i} w_1^{k-1} \ldYdZ{b}
	\mid \text{$0\le i\le1$ if $k \not\equiv 0 \mod p$} \}
$$
of $\ker(\delta_Y\delta_Z)/\im(\delta_X)$ to zero.
Lifts over~$i^*$ of most classes~$x$ become $d_2$-boundaries
this way, leaving only the classes
$$
\bF_p[v_0, w_1^p] \oplus \bF_p[w_1]\{a_1, \dots, a_{p-1}\}
$$
of $\cok(\delta_Y\delta_Z) \cap \ker(\delta_X)$.
The direct sum of these two bigraded groups gives an upper bound for
$E_3(j)$, and comparing this with the known abutment, we must also have
nonzero differentials
$$
d_2(w_1^{pk}) \doteq \Sigma^{-1} v_0^{p+2} w_1^{pk-1} \ldYdZ{b}
$$
for all $k \not\equiv 0 \mod p$.  Substituting $a_p$ for the image
under~$q^*$ of $\Sigma^{-1} v_0^{p-1} \ldYdZ{b}$, we obtain the
stated Adams $E_3$-term for~$j$.  The known abutment also determines
the later differentials and the $E_\infty$-term.
\end{proof}

\begin{remark}
See Figure~\ref{fig:E2d2jp3} for the case~$p=3$.
The map $e \: E_2(S) \to E_2(j)$ of mod~$p$ Adams spectral sequences
detects some $d_2$-differentials in the domain, including those on
$h_1$, $g_2$ and~$u$ in the notation of~\cite{Nak75}.  On the classes
in the image of the $J$-homomorphism, the map $e \: \pi_{kq-1}(S) \to
\pi_{kq-1}(j)$ preserves Adams filtration if $\ord_p(k) \in \{0,1\}$,
and otherwise increases Adams filtration by~$\ord_p(k)-1$.
\end{remark}

\begin{theorem}
The Adams spectral sequence for $j/p = j \wedge (S \cup_p e^1)$ with $p$
odd has
$$
E_2(j) = E[a_1, \Sigma^{-1} \ldYdZ{b}] \otimes \bF_p[v_1, b]
$$
with $d_2(\Sigma^{-1} \ldYdZ{b}) = \pm b$ and
$$
E_3(j) = E_\infty(j) = E[a_1] \otimes \bF_p[v_1] \,.
$$
Here $|a_1| = (1,q)$, $|\Sigma^{-1} \ldYdZ{b}| = (0,pq-1)$, $|v_1| =
(1,q+1)$ and~$|b| = (2,pq)$.
\end{theorem}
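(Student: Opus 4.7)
The plan is to parallel the proof of Theorem~\ref{thm:E3jmod2}.  Smash the cofiber sequence~\eqref{eq:jp} with the mod~$p$ Moore spectrum $S/p$ to obtain
$$
j/p \longto \ell/p \overset{\psi/p}\longto \Sigma^q \ell/p \longto \Sigma j/p,
$$
viewed as a case of~\eqref{eq:XYZ}.  The relevant $A$-modules $C_p$, $I_p$, $K_p$ are obtained from $C$, $I$, $K$ by tensoring with $E[Q_0] = H^*(S/p)$, and by Proposition~\ref{prop:KICp3} we still have $K_p \cong \Sigma^{pq} C_p$, so $\Ext_A(K_p, \bF_p)$ is a $t$-degree shift of $\Ext_A(C_p, \bF_p)$.

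The main computational step is to identify
$$
\Ext_A(C_p, \bF_p) \cong \bF_p[v_1, b] \otimes E[a_1],
$$
with $v_1^p = w_1$ and $a_i = a_1 v_1^{i-1}$ for $1 < i < p$.  I would derive this from the Bockstein long exact sequence associated to $0 \to \Sigma C \to C_p \to C \to 0$, whose connecting map is multiplication by~$v_0$.  Since the $v_0$-torsion in $\Ext_A(C, \bF_p)$ is the ideal generated by~$a_1, \dots, a_{p-1}$, and modulo~$v_0$ the relations $a_i a_{p-i} = \pm v_0^{p-2} b$ collapse, the cokernel of $v_0$-multiplication contributes $\bF_p[b, w_1]\{1, a_1, \dots, a_{p-1}\}$, while the shifted $v_0$-torsion supplies new classes whose bidegrees match exactly the powers $v_1^i$, with $w_1 = v_1^p$ forced by inspection at bidegree $(p, p(q+1))$.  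I expect this bidegree-by-bidegree identification to be the main obstacle.

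The connecting homomorphism $\delta_X$ in the long exact sequence from $(E_{j/p}) \colon 0 \to C_p \to H^*(j/p) \to \Sigma^{-1} K_p \to 0$ is a derivation on $\Ext_A(C_p, \bF_p)$, and its values on the generators $a_1, v_1, b$ land in bidegrees of $\Ext_A(\Sigma^{-1}K_p, \bF_p) \cong \Sigma^{pq-1}\Ext_A(C_p, \bF_p)$ where $t - s < 0$, hence $\delta_X = 0$.  The sequence then breaks into split short exact sequences and yields the stated $E_2(j/p) \cong E[a_1, \Sigma^{-1}\ldYdZ{b}] \otimes \bF_p[v_1, b]$, with $\Sigma^{-1}\ldYdZ{b}$ the generator of the suspension summand~$\Sigma^{pq-1} \Ext_A(C_p, \bF_p)$.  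Theorem~\ref{thm:id2q} now forces $d_2(\Sigma^{-1}\ldYdZ{b}) = \pm b$, since $\delta_Y \delta_Z$ takes $\Sigma^{-1}\ldYdZ{b}$ to $\pm b$ by construction and $i^*$ is an isomorphism in bidegree $(2,pq)$.  Leibniz propagation then reduces $E_3(j/p)$ to $E[a_1] \otimes \bF_p[v_1]$, which has the same Poincar\'e series as $\pi_*(j/p) = \bF_p[v_1] \oplus \Sigma^{q-1}\bF_p[v_1]$ (computed from $\pi_*(\ell/p) = \bF_p[v_1]$ and the fact that $(\psi/p)_*$ vanishes on homotopy mod~$p$), so $E_3 = E_\infty$.
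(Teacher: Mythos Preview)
Your overall strategy is sound and parallels the paper's treatment of the mod~$2$ case (Theorem~\ref{thm:E3jmod2}), but the paper's own proof here is a one-line citation: it simply invokes \cite{Rav86}*{Thm.~3.1.28} for the identification $\Ext_A(C_p,\bF_p)\cong\Ext_{A(1)}(E[\beta],\bF_p)\cong E[a_1]\otimes\bF_p[v_1,b]$, and leaves the rest implicit (to be read off as in the $j/2$ argument).  Your Bockstein route to this Ext computation is a legitimate self-contained alternative, but your sketch is slightly off: $\ker(v_0)$ in $\Ext_A(C,\bF_p)$ is not just the linear span of the~$a_i$'s over $\bF_p[b,w_1]$, it also contains $v_0^{p-2}b$ and its $\bF_p[b,w_1]$-multiples (since $v_0^{p-1}b=0$ is forced by $v_0a_i=0$ and $a_ia_{p-i}=\pm v_0^{p-2}b$).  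The lifts of these extra kernel classes account for $a_1v_1^{p-1}$ and its multiples, which you need for the additive count to match $E[a_1]\otimes\bF_p[v_1,b]$.  You would also still owe the multiplicative identifications $a_i=a_1v_1^{i-1}$ and $v_1^p=w_1$, which do not come for free from the long exact sequence.

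Your argument that $\delta_X=0$ via ``derivation plus vanishing on generators'' is correct in outline, but the derivation property itself needs justification (it holds because $H_*(j/p)$ is a comodule algebra and $(E_{j/p})$ dualizes to a square-zero extension, as the paper remarks in the integral case).  A cleaner route, mirroring the proof of Theorem~\ref{thm:E3jmod2}, is to observe that $\delta_Y\delta_Z$ is multiplication by~$b$ on $\Ext_A(K_p,\bF_p)\cong\Sigma^{pq}\Ext_A(C_p,\bF_p)$, hence injective; then Theorem~\ref{thm:id2q} forces $q^*$ to be injective, so $\delta_X=0$ without any appeal to the derivation property.  The remaining steps---the $d_2$-differential from Theorem~\ref{thm:id2q} and collapse at $E_3$ by comparison with $\pi_*(j/p)$---are as you describe.
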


\begin{proof}
See \cite{Rav86}*{Thm.~3.1.28} for $\Ext_{A(1)}(E[\beta], \bF_p)$.
\end{proof}


\begin{landscape}
\begin{figure}

\caption{$(E_2(j), d_2)$ at $p=3$ \label{fig:E2d2jp3}}
\end{figure}
\end{landscape}

\begin{bibdiv}
\begin{biblist}

\bib{Ada62}{article}{
   author={Adams, J. F.},
   title={Vector fields on spheres},
   journal={Ann. of Math. (2)},
   volume={75},
   date={1962},
   pages={603--632},
   issn={0003-486X},
   review={\MR{139178}},
   doi={10.2307/1970213},
}

\bib{Ada66}{article}{
   author={Adams, J. F.},
   title={A periodicity theorem in homological algebra},
   journal={Proc. Cambridge Philos. Soc.},
   volume={62},
   date={1966},
   pages={365--377},
   issn={0008-1981},
   review={\MR{194486}},
   doi={10.1017/s0305004100039955},
}

\bib{AM17}{article}{
   author={Andrews, Michael},
   author={Miller, Haynes},
   title={Inverting the Hopf map},
   journal={J. Topol.},
   volume={10},
   date={2017},
   number={4},
   pages={1145--1168},
   issn={1753-8416},
   review={\MR{3743072}},
   doi={10.1112/topo.12034},
}

\bib{AR05}{article}{
   author={Angeltveit, Vigleik},
   author={Rognes, John},
   title={Hopf algebra structure on topological Hochschild homology},
   journal={Algebr. Geom. Topol.},
   volume={5},
   date={2005},
   pages={1223--1290},
   issn={1472-2747},
   review={\MR{2171809}},
   doi={10.2140/agt.2005.5.1223},
}

\bib{Bru78}{article}{
   author={Bruner, R.},
   title={Algebraic and geometric connecting homomorphisms in the Adams
   spectral sequence},
   conference={
      title={Geometric applications of homotopy theory (Proc. Conf.,
      Evanston, Ill., 1977), II},
   },
   book={
      series={Lecture Notes in Math.},
      volume={658},
      publisher={Springer, Berlin},
   },
   date={1978},
   pages={131--133},
   review={\MR{513570}},
}

\bib{BG95}{article}{
   author={Bruner, Robert},
   author={Greenlees, John},
   title={The Bredon-L\"{o}ffler conjecture},
   journal={Experiment. Math.},
   volume={4},
   date={1995},
   number={4},
   pages={289--297},
   issn={1058-6458},
   review={\MR{1387694}},
}

\bib{BR21}{book}{
   author={Bruner, Robert R.},
   author={Rognes, John},
   title={The Adams spectral sequence for topological modular forms},
   series={Mathematical Surveys and Monographs},
   volume={253},
   publisher={American Mathematical Society, Providence, RI},
   date={2021},
   pages={690},
   isbn={978-1-4704-5674-0},
   review={\MR{4284897}},
}

\bib{Dav75}{article}{
   author={Davis, Donald M.},
   title={The cohomology of the spectrum $bJ$},
   journal={Bol. Soc. Mat. Mexicana (2)},
   volume={20},
   date={1975},
   number={1},
   pages={6--11},
   review={\MR{467749}},
}

\bib{DM82}{article}{
   author={Davis, Donald M.},
   author={Mahowald, Mark},
   title={Ext over the subalgebra $A_{2}$ of the Steenrod algebra for
   stunted projective spaces},
   conference={
      title={Current trends in algebraic topology, Part 1},
      address={London, Ont.},
      date={1981},
   },
   book={
      series={CMS Conf. Proc.},
      volume={2},
      publisher={Amer. Math. Soc., Providence, RI},
   },
   date={1982},
   pages={297--342},
   review={\MR{686123}},
   doi={10.2307/2374058},
}

\bib{DM89}{article}{
   author={Davis, Donald M.},
   author={Mahowald, Mark},
   title={The image of the stable $J$-homomorphism},
   journal={Topology},
   volume={28},
   date={1989},
   number={1},
   pages={39--58},
   issn={0040-9383},
   review={\MR{991098}},
   doi={10.1016/0040-9383(89)90031-1},
}

\bib{Hil08}{article}{
   author={Hill, Michael A.},
   title={Cyclic comodules, the homology of $j$, and $j$-homology},
   journal={Topology Appl.},
   volume={155},
   date={2008},
   number={15},
   pages={1730--1736},
   issn={0166-8641},
   review={\MR{2437024}},
   doi={10.1016/j.topol.2008.05.013},
}

\bib{Mac63}{book}{
   author={Mac Lane, Saunders},
   title={Homology},
   series={Die Grundlehren der mathematischen Wissenschaften, Bd. 114},
   publisher={Academic Press, Inc., Publishers, New York; Springer-Verlag,
   Berlin-G\"{o}ttingen-Heidelberg},
   date={1963},
   pages={x+422},
   review={\MR{0156879}},
}

\bib{Mil81}{article}{
   author={Miller, Haynes R.},
   title={On relations between Adams spectral sequences, with an application
   to the stable homotopy of a Moore space},
   journal={J. Pure Appl. Algebra},
   volume={20},
   date={1981},
   number={3},
   pages={287--312},
   issn={0022-4049},
   review={\MR{604321}},
   doi={10.1016/0022-4049(81)90064-5},
}

		
\bib{Nak75}{article}{
   author={Nakamura, Osamu},
   title={Some differentials in the ${\rm mod}\ 3$ Adams spectral sequence},
   journal={Bull. Sci. Engrg. Div. Univ. Ryukyus Math. Natur. Sci.},
   number={19},
   date={1975},
   pages={1--25},
   review={\MR{0385852}},
}

\bib{Rav86}{book}{
   author={Ravenel, Douglas C.},
   title={Complex cobordism and stable homotopy groups of spheres},
   series={Pure and Applied Mathematics},
   volume={121},
   publisher={Academic Press, Inc., Orlando, FL},
   date={1986},
   pages={xx+413},
   isbn={0-12-583430-6},
   isbn={0-12-583431-4},
   review={\MR{860042}},
}

\bib{Rog03}{article}{
   author={Rognes, John},
   title={The smooth Whitehead spectrum of a point at odd regular primes},
   journal={Geom. Topol.},
   volume={7},
   date={2003},
   pages={155--184},
   issn={1465-3060},
   review={\MR{1988283}},
   doi={10.2140/gt.2003.7.155},
}

\bib{SI67}{article}{
   author={Shimada, Nobuo},
   author={Iwai, Akira},
   title={On the cohomology of some Hopf algebras},
   journal={Nagoya Math. J.},
   volume={30},
   date={1967},
   pages={103--111},
   issn={0027-7630},
   review={\MR{215896}},
}

\bib{Sin68}{article}{
   author={Singer, William M.},
   title={Connective fiberings over ${\rm BU}$ and ${\rm U}$},
   journal={Topology},
   volume={7},
   date={1968},
   pages={271--303},
   issn={0040-9383},
   review={\MR{232392}},
   doi={10.1016/0040-9383(68)90006-2},
}

\bib{Sto63}{article}{
   author={Stong, Robert E.},
   title={Determination of $H^{\ast} ({\rm BO}(k,\cdots,\infty ),Z_{2})$
   and $H^{\ast} ({\rm BU}(k,\cdots,\infty ),Z_{2})$},
   journal={Trans. Amer. Math. Soc.},
   volume={107},
   date={1963},
   pages={526--544},
   issn={0002-9947},
   review={\MR{151963}},
   doi={10.2307/1993817},
}

\bib{Tan70}{article}{
   author={Tangora, Martin C.},
   title={On the cohomology of the Steenrod algebra},
   journal={Math. Z.},
   volume={116},
   date={1970},
   pages={18--64},
   issn={0025-5874},
   review={\MR{0266205}},
   doi={10.1007/BF01110185},
}

\end{biblist}
\end{bibdiv}

\end{document}